\renewcommand{\theequation}{\thesection\arabic{equation}}
\def\P{\mathbb P}
\def\R{\mathbb R}
\def\E{\mathbb E}
\def\vX{\boldsymbol{X}}
\def\mnull{\mathrm{null}}
\def\X{\boldsymbol X}
\def\l{\left(}
\def\r{\right)}
\def\lmi{\left[}
\def\rmi{\right]}
\def\lno{\left\|}
\def\rno{\right\|}
\def\bbeta{\boldsymbol{\beta}}
\def\lb{\left\{}
\def\rb{\right\}}
\def\var{\mathrm{var}}
\def\M{\boldsymbol M}
\def\x{\boldsymbol x}
\def\Z{\mathbb{Z}}
\def\m{\boldsymbol m}
\def\H{\mathcal H}
\def\S{\mathcal{S}_{Y|\bs{X}}}
\def\rank{\mathrm{rank}}
\def\mi{\mathrm{i}}
\newcommand{\mb}{\mathbb}
\newcommand{\mc}{\mathcal}
\newcommand{\bs}{\boldsymbol}
\newcommand{\wh}{\widehat}
\newcommand{\mr}{\mathrm}
\newcommand{\wt}{\widetilde}
\newcommand{\ve}{\varepsilon}
\newcommand{\ttE}{\mathtt{E}}
\newcommand{\Y}{\bs{Y}}
\numberwithin{equation}{section}
\newtheorem{theorem}{Theorem}
\newtheorem{lemma}{Lemma}
\newtheorem{corollary}{Corollary}
\newtheorem{proposition}{Proposition}
\theoremstyle{definition}
\newtheorem{definition}{Definition}
\newtheorem{example}{Example}
\newtheorem{assumption}{Assumption}
\newtheorem{remark}{Remark}
\newcommand{\pt}[1]{\left(#1\right)}
\newcommand{\cl}[1]{\left\{#1\right\}}
\newcommand{\ang}[1]{\left\langle#1\right\rangle}
\newcommand{\norm}[1]{\left\lVert#1\right\rVert}
\newcommand{\mpt}[1]{\mleft(#1\mright)}
\title{Functional Slicing-free Inverse Regression via Martingale Difference Divergence Operator}
\author[a]{Songtao Tian}
\author[a]{Zixiong Yu\thanks{Co-first author.}}
\author[b]{Rui Chen\thanks{Corresponding author.}}
\affil[a]{Department of Mathematical Sciences, Tsinghua University}
\affil[b]{Center for Statistical Science, Department of Industrial Engineering, Tsinghua University}
\date{} 
\begin{document}
\maketitle
\renewcommand{\baselinestretch}{2}
\begin{quotation}
\begin{spacing}{2}
\noindent
\small Abstract:~
Functional sliced inverse regression (FSIR) is one of the most popular algorithms for functional sufficient dimension reduction (FSDR). However, the choice of slice scheme in FSIR is critical but challenging.
In this paper, we propose a new method called functional slicing-free inverse regression (FSFIR) to estimate the central subspace in FSDR. FSFIR is based on the martingale difference divergence operator, which is a novel metric introduced to characterize the conditional mean independence of a functional predictor on a multivariate response. We also provide a specific convergence rate for the FSFIR estimator.
Compared with existing functional sliced inverse regression methods, FSFIR does not require the selection of a slice number. Simulations demonstrate the efficiency and convenience of FSFIR.
\end{spacing}
\vspace{9pt}

\noindent {\it Key words and phrases:}
Functional sliced inverse regression, Functional slicing-free inverse regression, Martingale difference divergence, Sufficient dimension reduction. 
\par
\end{quotation}\par

	\def\thefigure{\arabic{figure}}
	\def\thetable{\arabic{table}}
	
	\renewcommand{\theequation}{\thesection.\arabic{equation}}

	\fontsize{12}{14pt plus.8pt minus .6pt}\selectfont
	
\newpage
\section{Introduction}

Classical statistical methods often failed in the high-dimensional data where the number of features $p$ is comparable to or even larger than the number of observed samples $n$. Sufficient dimension reduction (SDR)
is often the first step in dealing with high-dimensional problems. SDR aims to finding the minimal low-rank projection, 
of a predictor $\X\in\R^p$, which contains all the information of a response $Y\in\R$ without estimating the unknown link function. 
Indeed, SDR gives the intersection of all spaces $\mc S\subset \R^p$ satisfying $Y\perp \!\!\! \perp \bs X|P_{\mathcal S} \bs X$ where $\perp \!\!\! \perp$ denotes independence and
$P_{\mc S}$ denotes the projection onto $\mc S$. The said intersection is known as the \textit{central subspace} and denoted by $\mc S_{Y|\X}$. 

There are various well-known SDR methods: \textit{sliced inverse regression} (SIR, \citealt{li1991sliced}), 
 \textit{sliced average variance estimation} (SAVE, \citealt{cook1991sliced}), \textit{principal hessian directions} (PHD, \citealt{li1992principal}), \textit{directional regression} (DR, \cite{li2007directional}), \textit{minimum average variance estimation} (MAVE, \cite{xia2009adaptive}) and many others.
Among these methods that deal with the SDR problems,
SIR is particularly popular  due to its simplicity and efficiency. 
Under linearity and coverage conditions (See Assumption \ref{as:Linearity condition and Coverage condition}), SIR relates the central subspace $\S$ with the eigen-space of the covariance of conditional mean, i.e.,   $\var\mpt{\E[\X|Y]}$. Then SIR estimates $\var\mpt{\E[\X|Y]}$ by dividing the samples into several equally-sized slices according to the order statistics of response and averaging the sample covariance within each slice. 

However, the consistency and convergence rate of SIR, together with other slice-based SDR methods, involve the choice of a suitable slice number (denoted by $H$). 
\cite{hsing1992asymptotic}
 proved that SIR gives a root $n$ consistent estimation when each slice contains two observations (i.e.,  $H=n/2$), whereas \cite{zhu1995asymptotics}
argued that a smaller number of observations in each slice could yield a bigger covariance matrix of asymptotically multinormal distribution of estimators. In addition, by considering the case where the number of samples in each slice, denoted by $c$, goes to infinity with increasing sample size $n$,
 \cite{zhu1995asymptotics} showed that the optimal $c$ satisfies $c=c_0n^\alpha$ where $c_0>0$ is a constant and $\alpha\in\pt{0,1/2}$ depends on the distribution of the predictor $\X$ and the central curve $m(y):=\E[\bs X|Y=y]$.  Nevertheless, 
determining the constants $c_0$ and $\alpha$ in practice is usually challenging.

Notably, in the multivariate-valued predictor case, several approaches have been proposed to address the issue of selecting a suitable slice number $H$. For instance, 
\cite{zhu2010dimension} suggested
 \textit{cumulative slicing estimation} by considering all estimations with two slices for the central subspace $\S$. They proposed \textit{cumulative mean estimation}, \textit{cumulative variance estimation} and \textit{cumulative directional regression } parallel to SIR, SAVE and DR respectively.
\cite{cook2014fused} also proposed a \textit{fusing} method to relieve the burden of choosing a suitable $H$.
However, this method is not entirely slicing-free because it still requires a predefined collection of quantile slicing schemes. In order to provide an adaptive slicing scheme, \cite{wang2019dimension} implemented a regularization criteria by transforming the eigen-decomposition problem into a trace-optimization problem.
\cite{mai2021slicing} proposed a \textit{slicing-free inverse regression} method
 with the help of the \textit{martingale difference divergence matrix} (MDDM) which measures the conditional mean independence of a high-dimensional predictor on a multivariate response. 

It is an important trend to study  functional data analysis in SDR and many significant achievements have been made in this area. For example, 
\cite{ferre2003functional} first applied SIR to functional-valued data where $\bs X$ is in $\mathcal L_2([0,1])$ (the separable Hilbert space of square-integrable curves on $[0,1]$) and the response $Y$ is in $\mb R$. Based on this work, various developments in FSIR have been made (e.g., \cite{Forzanicook2007note,lian2014Sefsdr,lian2015functional,wang2020functional,chen2023optimality}).

 Again, the choice of the slicing number $H$  in FSIR remains an issue. In addition, although the central space can be defined for both univariate and multivariate responses, most existing SDR methods, such as FSIR , primarily focus on the case of univariate response. Extending these methods to handle multivariate response is a non-trivial task, and this limitation is evident in FSIR.

Therefore, it is natural to seek for a \textit{functional slicing-free} method for  multivariate response to avoid the difficulty in choosing a suitable $H$.
 In this paper, we propose a new method we call functional slicing-free inverse regression (FSFIR) to estimate the central subspace $\mc S_{\Y|\bs X}$ for $(\vX,\Y)\in\mathcal L_2([0,1])\times\R^q$ without specifying any $H$.  
 
\subsection{Major contributions}
The FSFIR method is our solution to the aforementioned goals. Around this core innovation, the main clues and results of our article are explained as follows.

 First, we introduce a new metric: martingale difference divergence operator
 (MDDO), which generalizes MDDM in \cite{lee2018martingale}. It turns out that MDDO enjoys a lot of properties similar to MDDM: 
 \begin{itemize}
 \item[$\mathrm{(i)}$]$\mathrm{MDDO}(\boldsymbol{X}|\Y)=0\Longleftrightarrow \mathbb E[\boldsymbol{X}|\Y]=0$\quad a.s.;
\item[$\mathrm{(ii)}$] $\mathrm{MDDO}(T^*{\boldsymbol{X}}|\Y)=T^*\mathrm{MDDO}({\boldsymbol{X}}|\Y)T,\quad\forall T:\mc H\to\mc H $.
 \end{itemize}
 From (i) we see that MDDO quantifies the conditional mean independence of a functional predictor $\bs X$ on a multivariate response $\Y$.
 It further implies that under the linearity and coverage conditions, the central subspace $\mc S_{\Y|\vX}$ and the image of MDDO are closely related:
 \begin{equation}\label{Eqt:GammaSMDDO}
 \Gamma \mc S_{\Y|\vX}=\mathrm{Im}\{\mathrm{MDDO}(\boldsymbol{X}|\Y)\}.
 \end{equation}
 
 Second, we propose the FSFIR by estimating $\mathrm{MDDO}(\boldsymbol{X}|\Y)$ without specifying any $H$.  Our method is based on the above \eqref{Eqt:GammaSMDDO} and inspired by  \cite{mai2021slicing}.
 To tackle the issue that $\Gamma^{-1}$ may be unbounded, we adopt a truncation on the predictor $\X$.

 Finally, we derive a specific convergence rate of FSFIR for estimating the central subspace $\mc S_{\Y|\vX}$. To compare FSFIR with classical FSIR methods including truncated FSIR \citep{ferre2003functional,chen2023optimality} and regularized FSIR \citep{lian2015functional}, we conduct simulations contains the 
subspace estimation error performance of FSFIR on both synthetic data  and real data. The results demonstrate the good performance and convenience of our FSFIR compared with   FSIR methods.

The rest of this paper is organized as follows. In Section $\ref{subsection, review}$, we review MDDM briefly. Detailed definition and properties of MDDO are in Section $\ref{subsection, MDDO}$. 
 Section $\ref{section, Slicing-free IR via MDDO}$ establishes 
the medium to estimate the central subspace $\mc S_{\Y|\vX}$ in terms of MDDO, i.e.,  Equation \eqref{Eqt:GammaSMDDO}. In Section $\ref{subsection, estimation method}$, we propose the FSFIR for estimating the central subspace $\mc S_{\Y|\vX}$ and then design a detailed algorithm for FSFIR. The specific convergence rate of FSFIR is given in Section $\ref{subsection, convergence rate}$. 
Section $\ref{section, experiments}$
contains our experiments. We make some concluding remarks in  Section $\ref{section, Discussion}$.

\subsection*{Notations}
Let $\mathcal H:=\mathcal L_2([0,1])$ be the separable Hilbert space of square-integrable curves on $[0,1]$ with the inner product $\ang{f,g}=\displaystyle{\int_{0}^1}f(u)g(u)\,\mathrm{d}u$ and norm $\norm{f}:=\sqrt{\ang{f,f}}$ for $f,g\in\mathcal H$.

Given any
operator $T$ on $\mathcal H$, we use $\mathrm{Im}(T)$ and $\mathrm{null}(T)$
to denote  the closure of  image of $T$, and the null space of $T$ respectively. Besides,  we use $P_T$ to denote the projection operator from $\mc H$ to ${\mathrm{Im}}(T)$,  $T^*$ the adjoint operator of $T$ (a bounded linear operator).
We use $\norm{T}$ to denote the operator norm  with respect to $\ang{\cdot,\cdot}$ of $T$:
\begin{align*}
\norm{T} :={\sup_{\bs{\beta}\in\mathbb{S}_\mathcal{H}}}\|T(\bs{\beta})\|
\end{align*}
where $\mathbb{S}_{\mathcal{H}}=\cl{\bs{\beta}\in\mathcal{H}:\norm{\bs{\beta}}=1}$.
If $T$ is further compact, then we use $\sigma_j(T)$ to denote the $j$-th singular value of $T$. 
 When $T$ is  positive semi-definite and compact, 
  $T^\dagger$  denotes the  Moore-Penrose pseudo-inverse of $T$ and 
 $\lambda_j(T)$ denotes the $j$-th eigenvalue of $T$.
Abusing notations, we also denote by $P_S$ the projection operator onto a closed space $S\subseteq \mathcal{H}$.
 For any $x,y\in\H$, their tensor product $x\otimes y:\H\to\H$ is defined to be the linear operator: $(x\otimes y)(z)=\ang{x,z}y$ for all $z\in\H$.
 For any random element $\boldsymbol{X}=\bs X_t\in \H$, its mean function is defined as $(\mb E\bs X)_t=\mb E[\bs X_t]$.  
  For any random operator $T$ on $\H$, the mean $\E[T]$ is defined as the unique operator on $\H$ such that for all $z\in \H$, 
$ (\E[T])(z)=\E[T(z)]$. Specifically,
we denote by $\Gamma$ the covariance operator of $\X$, i.e.,  
\begin{align*}
\Gamma:=\var(\bs X)=\E[(\X-\E\X)\otimes (\X-\E\X)] 
\end{align*}
satisfying  $\var(\boldsymbol{X})(z)=\E\mpt{ \ang{\boldsymbol{X}, z} \boldsymbol{X}}- \ang{\E\boldsymbol{X}, z}\E\boldsymbol{X}$.

Throughout the paper, $C_i$ stands for a generic 
constant, $i\geqslant 0$ being an integer. Note that $C_i$ depends on the context. 
For a random sequence $X_n$, we denote by $X_n=O_{\mathbb{P}}(a_n)$ that $\forall\varepsilon>0$, there exists a constant $C_\varepsilon>0$, such that
$\sup_n\mathbb{P}(|X_n|\geqslant C_\varepsilon a_n)\leqslant\varepsilon$.  
For two sequences $a_n$ and $b_n$, we denote $a_n\lesssim  b_n$  if there
exists  a  positive constant $C$ such that $a_n\leqslant Cb_n$. Let $[k]$ denote $\{1,2,\dots,k\}$ for some positive integer $k\geqslant1$.

\section{MDDO for Functional Data}\label{section, MDDO for functional data}

In this section, we introduce a new metric which we call  martingale difference divergence operator (MDDO) to measure the conditional mean independence
of a functional-valued predictor on  a multivariate response. We are mainly motivated by the work \cite{lee2018martingale} that introduced the notion of  martingale difference divergence matrix (MDDM).
\subsection{Review of the MDDM}\label{subsection, review}
To characterize the conditional mean independence of $\bs V:=(V_1,...,V_p)^\top\in\mb R^p$ on $\bs U=(U_1,...,U_q)^\top\in\mb R^q$, \cite{lee2018martingale}
define the MDDM, which we now recall.
\begin{definition} [\citealt{lee2018martingale}]
\label{def: MDDM}
For $\bs V\in\mb R^p$ and $\bs U\in\mb R^q$, let $$H(\bs s):=(H_1(\bs s),...,H_p(\bs s))^\top\in \mb R^p\qquad~\text{for}~\bs s\in\mb R^q$$ where $H_j(\bs s)$ is defined by $H_j(\bs s)=\mathrm{cov}(V_j,e^{\mi\langle \bs s,\bs U\rangle})$, $\mi=\sqrt{-1}$. Let $H^*(\bs s)$ be the conjugate-transpose of $H(\bs s)$. Then the following matrix is called the Martingale Difference Divergence Matrix (MDDM):
\begin{equation*}
\mathrm{MDDM}(\bs V|\bs U):=\frac{1}{c_q}\int_{\mb R^q}\frac{H(\bs s)H^*(\bs s)}{\|\bs s\|^{1+q}}~\mathrm{d}\bs s,
\end{equation*}
where $c_q$ stands for the constant $\frac{\pi^{(q+1)/2}}{\varGamma\l (q+1)/2\r}$ with $\varGamma(\cdot)$ being the Gamma function. 
\end{definition}
\begin{remark}
The integration on $\mb{R}^q$ is taken in the sense of principal value, i.e.,  $\displaystyle{\int_{\mb{R}^q}}=\displaystyle{\lim\limits_{\varepsilon\to0^+}\int_{D_\varepsilon}}$, where $D_\varepsilon=\{\bs x\in\mb{R}^q:\varepsilon\leqslant\|\bs x\|\leqslant \varepsilon^{-1}\}$.
\end{remark}
Clearly, $\mathrm{MDDM}(\bs V|\bs U)$ is a positive semi-definite matrix. Suppose that $\mb E[\|\bs V\|^2+\|\bs U\|^2]<\infty$, then there is a simpler expression for MDDM:
\begin{equation*}
\mathrm{MDDM}(\bs V|\bs U)=- \mb E[(\bs V-\E\bs V)(\bs V'-\E\bs V')^\top\|\bs U-\bs U'\|],
\end{equation*}
where $(\bs V',\bs U')$ is another independent identical distributed (i.i.d.) copy of $(\bs V,\bs U)$. MDDM enjoys some properties:
\begin{proposition}[\citealt{lee2018martingale}]
\label{proposition, MDDM basic peoperties}~
\begin{itemize}
\item [$\mathrm{(i)}$]
For $\bs V\in\mb R^p$ and $\bs U\in\mb R^q$, $\bs V$ is conditional mean independent on $\bs U$ almost surely (a.s.) if and only if $\mathrm{MDDM}(\bs V|\bs U)$ vanishes, i.e., 
\[\mb E[\bs V|\bs U]=\mb E[\bs V]~\text{a.s.}\Longleftrightarrow \mathrm{MDDM}(\bs V|\bs U)=\bs{O}\]
where $\bs{O}$ stands for the zero matrix;
\item [$\mathrm{(ii)}$] For any $\bs A\in\mb R^{p\times d}$, we have $\mathrm{MDDM}(\bs A^\top\bs V|\bs U)=\bs A^\top\mathrm{MDDM}(\bs V|\bs U)\bs A$.
\end{itemize}
\end{proposition}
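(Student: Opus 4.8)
The plan is to work directly from the integral representation in Definition \ref{def: MDDM} together with the elementary algebra of the vector $H(\bs s)$, handling the two assertions separately since they rest on quite different mechanisms. Part (ii) is the purely algebraic one, so I would dispatch it first. The map $\bs V\mapsto H(\bs s)$ is linear through the covariance, so writing $\bs W:=\bs A^\top\bs V$ the $k$-th component of its associated vector is $\mathrm{cov}\mpt{(\bs A^\top\bs V)_k,e^{\mi\ang{\bs s,\bs U}}}=\sum_j A_{jk}\,\mathrm{cov}\mpt{V_j,e^{\mi\ang{\bs s,\bs U}}}$, whence $H^{\bs W}(\bs s)=\bs A^\top H(\bs s)$ for every $\bs s$. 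Since $\bs A$ is real, $\mpt{\bs A^\top H(\bs s)}\mpt{\bs A^\top H(\bs s)}^*=\bs A^\top\mpt{H(\bs s)H^*(\bs s)}\bs A$, and because $\bs A$ does not depend on $\bs s$ it commutes with the (principal-value) integral over $\mb R^q$, giving $\mathrm{MDDM}(\bs A^\top\bs V|\bs U)=\bs A^\top\mathrm{MDDM}(\bs V|\bs U)\bs A$. No analytic subtlety arises here beyond this commutation.

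For part (i) the starting point is that $\mathrm{MDDM}(\bs V|\bs U)$ is positive semi-definite, so it vanishes if and only if its trace does, i.e., if and only if $\int_{\mb R^q}\abs{H_j(\bs s)}^2\,\norm{\bs s}^{-(1+q)}\,\mathrm d\bs s=0$ for each $j\in[p]$. The integrand is nonnegative and the weight $\norm{\bs s}^{-(1+q)}$ is strictly positive off the origin, so the integral vanishes only if $H_j(\bs s)=0$ for almost every $\bs s$; since $H_j$ is continuous in $\bs s$ (by dominated convergence, using $\E\abs{V_j}<\infty$), this upgrades to $H_j\equiv0$, hence $H(\bs s)=\bs 0$ identically. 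I would then recast $H_j$ by conditioning on $\bs U$: with $g_j(\bs U):=\E[V_j|\bs U]-\E[V_j]$ one has $H_j(\bs s)=\E\mbk{g_j(\bs U)\,e^{\mi\ang{\bs s,\bs U}}}$, which is precisely the Fourier transform of the finite signed measure $g_j\,\mathrm d\P_{\bs U}$. The backward implication is then immediate, since $\E[\bs V|\bs U]=\E[\bs V]$ a.s.\ forces every $g_j\equiv0$, hence $H\equiv\bs 0$ and $\mathrm{MDDM}(\bs V|\bs U)=\bs O$.

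The crux, and the step I expect to be the main obstacle, is the forward implication: deducing $g_j(\bs U)=0$ a.s.\ from $H\equiv\bs 0$. This is exactly where the injectivity (uniqueness theorem) of the Fourier transform on finite signed measures enters: $H_j\equiv0$ says the transform of $g_j\,\mathrm d\P_{\bs U}$ vanishes, so that signed measure is the zero measure, and therefore $g_j(\bs U)=0$ $\P_{\bs U}$-a.s., i.e., $\E[V_j|\bs U]=\E[V_j]$ a.s. The care needed is in verifying the hypotheses for this uniqueness argument: the moment condition $\E\mbk{\norm{\bs V}^2+\norm{\bs U}^2}<\infty$ ensures $g_j$ is $\P_{\bs U}$-integrable so that $g_j\,\mathrm d\P_{\bs U}$ is a genuine finite signed measure with a well-defined transform, and it also licenses the interchange of expectation and conditioning used to obtain the representation of $H_j$. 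Collecting the coordinatewise conclusions over $j\in[p]$ gives $\E[\bs V|\bs U]=\E[\bs V]$ a.s., which closes the equivalence.
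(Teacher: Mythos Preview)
Your argument is correct. Note, however, that the paper does not supply its own proof of Proposition~\ref{proposition, MDDM basic peoperties}: the result is simply quoted from \cite{lee2018martingale}, so there is no in-paper proof to compare against directly.

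It is still worth contrasting your approach with how the paper handles the functional analogue, Theorem~\ref{theorem, MDDO and conditional mean independence}. For part~(ii) the paper works from the expectation representation (Lemma~\ref{lemma, equivalence of two def of MDDO}) rather than the integral definition, so the transformation law falls out of the bilinearity of the tensor product instead of the linearity of $\bs s\mapsto H(\bs s)$; your route via $H^{\bs W}(\bs s)=\bs A^\top H(\bs s)$ is equally clean and avoids needing the expectation form. For part~(i) the difference is more substantive: you argue directly from Fourier uniqueness on the signed measures $g_j\,\mathrm d\P_{\bs U}$, whereas the paper reduces the MDDO statement to a scalar quadratic form $\langle M\bs\beta,\bs\beta\rangle$ and then invokes a black-box FMDD result (Lemma~\ref{lem:prop1inlee}) from \cite{lee2020testing}. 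Your proof is thus more self-contained, while the paper's is more modular; in the finite-dimensional MDDM setting your direct Fourier argument is arguably the more natural choice.
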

MDDM generalizes the \textit{Martingale Difference Divergence} (MDD) and the normalized MDD, namely \textit{Martingale Difference correlation} (MDC). For more about MDD and MDC, see \cite{shao2014martingale}. All these statistics can be used to measure the conditional mean independence of $\bs V$ on $\bs U$. Specifically, we have
\begin{align*}
\mb E[\bs V|\bs U]=\mb E[\bs V]~\text{a.s.}&\Longleftrightarrow\mathrm{MDD}(\bs V|\bs U)=\mathrm{MDC}(\bs V|\bs U)=0\\
&\Longleftrightarrow\mathrm{MDDM}(\bs V|\bs U)=\bs O.
\end{align*}
Meanwhile, we can estimate the MDDM in the following way: assume that we have observed $n$ i.i.d. samples $\{(\bs V_k,\bs U_k)\}_{k=1}^n$ from the same joint distribution as $(\bs V,\bs U)$. Then the finite sample counterpart of MDDM can be defined as 
\begin{equation*}
\mathrm{MDDM}_n(\bs V|\bs U):=- \frac{1}{n^2}\sum_{h,l=1}^n(\bs V_h-\overline{\bs V}_n)(\bs V_l-\overline {\bs V}_n)^\top\|\bs U_h-\bs U_l\|,
\end{equation*}
where $\overline {\bs V}_n:=n^{-1} \sum_{i=1}^n \bs V_i$ is the sample mean of $\{\bs V_i\}_{i=1}^n$.
\cite{mai2021slicing} related the eigenspace (i.e.,  the space spanned by eigenfunctions corresponding to the nonzero eigenvalues) of MDDM with the central subspace in SDR and then proposed a slicing-free inverse regression estimator. Furthermore, they proved a key large deviation inequality
between $\mathrm{MDDM}_n(\bs V|\bs U)$ and $\mathrm{MDDM}(\bs V|\bs U)$ and then established the consistency of this estimator.
\subsection{MDDO}\label{subsection, MDDO}
In this section, we will generalize the MDDM in Definition $\ref{def: MDDM}$ to the MDDO. Roughly speaking, we replace the vector-valued $(\bs V,\bs U)\in\R^{p}\times\R^q$ with a functional-valued $(\bs X,\Y)\in\mc H\times \R^{q}$. Accordingly, we need the tensor product to replace the matrix product. 

Without loss of generality, we assume that $\vX\in\mc H$ satisfies $\mathbb{E}[\boldsymbol{X}]=0$ throughout the paper. As is usually done in functional data analysis \citep{ferre2003functional,lian2014Sefsdr,lian2015functional,chen2023optimality}, we assume  that $\mathbb{E}[\|\boldsymbol{X}\|^4]<\infty$, 
which implies
that $\Gamma$ is a trace class \citep{hsing2015theoretical} and $\boldsymbol{X}$ possesses the following Karhunen--Lo\'{e}ve expansion: 
 there exists a pairwise uncorrelated random sequence $\{\omega_j\}_{j\in\Z_{\geqslant1}}$ with $\mb E[\omega_j]=0$ and $\mathrm{var}(\omega_j)=1$ for all $j$, such that 
\begin{equation}\label{eq:X expansion}
\bs X=\sum_{j=1}^\infty \sqrt{\lambda_j}\omega_j\phi_j,
\end{equation}
where $\{\lambda_j\}_{j\in\Z_{\geqslant1}}$ and $\{\phi_j\}_{j\in\Z_{\geqslant1}}$ are eigenvalues (with descent order) and associated  eigenfunctions of $\Gamma$.
In addition, we assume that $\Gamma$ is non-singular (i.e., $\lambda_i>0, \forall i$) as the literature on functional data analysis usually does. Since $\Gamma$ is compact ($\Gamma$ is a trace class), by spectral decomposition theorem of compact operators, we know that $\{\phi_{i}\}^{\infty}_{i=1}$ forms a complete basis of $\mc H$.


Now, we can state our definition of MDDO. 

\begin{definition}\label{def: MDDO}
For $\bs X\in\mc H$ and $\Y\in\R^q$, we define the Martingale Difference Divergence Operator (MDDO) by 
\begin{equation*}
\mathrm{MDDO}(\bs X|\Y):=\frac{1}{c_q}\int_{\mb R^q}\frac{G_{\bs s}\otimes \overline G_{\bs s}}{\|\bs s\|^{1+q}}~\mathrm{d}\bs s,
\end{equation*}
where $G_{\bs s}\in\mc H$ for $\bs s\in\R^q$ is defined as $G_{\bs s}(t)=\mathrm{cov}\hspace{-0.9mm}\left(\bs X(t),e^{\mi \langle\bs s,\Y\rangle}\right)$ for any $t\in[0,1]$ and $\overline G_{\bs s}$ is the complex
conjugate of $G_{\bs s}$.
\end{definition}

\begin{remark}
Again, the integration on $\mb{R}^q$ is taken in the sense of principal value.
\end{remark}

Clearly, $\mathrm{MDDO}(\bs X|\Y)$ is a positive semi-definite operator from $\mc H$ to itself. Next we characterize MDDO in terms of expectation, 
which is easier to compute.
In order to achieve this, we need a global
assumption:
\begin{assumption}\label{as:joint distribution assumption}~
 The joint distribution of $(\boldsymbol{X}, \Y)\in\mathcal H\times \mathbb{R}^q$ satisfies the second-order-moment condition, i.e.,  
$\mb E[\|\bs X\|^2+\|\Y\|^2]<\infty$.
\end{assumption}
\begin{lemma}\label{lemma, equivalence of two def of MDDO}Under Assumption $\ref{as:joint distribution assumption}$, we have,
\begin{equation*}
\mathrm{MDDO}(\bs X|\Y)=- \mb E[\bs X\otimes\bs X'\|\Y-\Y'\|],
\end{equation*}
where $(\bs X',\Y')$ is an i.i.d. copy of $(\bs X,\Y)$.
\end{lemma}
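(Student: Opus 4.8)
The plan is to verify the identity by testing both operators against arbitrary $f,g\in\H$, i.e.\ by showing that the bilinear forms $\ang{\mathrm{MDDO}(\X|\Y)f,g}$ and $-\ang{\E[\X\otimes\X'\,\norm{\Y-\Y'}]f,g}$ coincide; since both operators are bounded (the right-hand side is a genuine Bochner integral under Assumption \ref{as:joint distribution assumption}, as $\E[\norm{\X}\norm{\X'}\norm{\Y-\Y'}]<\infty$), agreement of all such forms yields operator equality. Fix $f,g\in\H$. By the definition of the tensor product, $\ang{(G_{\bs s}\otimes\overline{G_{\bs s}})f,g}=\ang{G_{\bs s},f}\,\ang{\overline{G_{\bs s}},g}$, so the task reduces to evaluating a scalar integral over $\bs s\in\R^q$.

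First I would rewrite the two factors as expectations. Since $\E[\X]=0$, the covariance defining $G_{\bs s}$ gives $\ang{G_{\bs s},f}=\E[\ang{\X,f}(e^{-\mi\ang{\bs s,\Y}}-1)]$, where the subtraction of $1$ is harmless precisely because $\E\ang{\X,f}=0$; similarly $\ang{\overline{G_{\bs s}},g}=\E[\ang{\X,g}(e^{\mi\ang{\bs s,\Y}}-1)]$. Introducing an i.i.d.\ copy $(\X',\Y')$ and using independence, the product of the two factors collapses into one expectation $\E[\ang{\X,f}\ang{\X',g}(e^{-\mi\ang{\bs s,\Y}}-1)(e^{\mi\ang{\bs s,\Y'}}-1)]$.

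The crux is then to exchange the $\bs s$-integral with this expectation and to evaluate the inner integral. The centering by $1$ is exactly what makes Fubini applicable: using $\abs{e^{\mi\theta}-1}\le\min(2,\abs{\theta})$ together with $\abs{\ang{\bs s,\Y}}\le\norm{\bs s}\,\norm{\Y}$, the integrand is $O(\norm{\bs s}^{1-q})$ near the origin and $O(\norm{\bs s}^{-1-q})$ at infinity, so in absolute value its double integral is dominated by $\E[\norm{\Y}\,\abs{\ang{\X,f}}]\,\E[\norm{\Y'}\,\abs{\ang{\X',g}}]$ and $\E[\abs{\ang{\X,f}}]\,\E[\abs{\ang{\X',g}}]$, both finite by Cauchy--Schwarz under Assumption \ref{as:joint distribution assumption}. (The same estimate shows the defining integral converges absolutely, so its principal value is an ordinary integral.) Having swapped, I would expand the product, drop the imaginary parts (odd under $\bs s\mapsto-\bs s$, hence integrating to zero), and rewrite the real part as $-(1-\cos\ang{\bs s,\Y-\Y'})+(1-\cos\ang{\bs s,\Y})+(1-\cos\ang{\bs s,\Y'})$, the constant terms cancelling.

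Finally I would apply termwise the classical Fourier identity $\int_{\R^q}\norm{\bs s}^{-1-q}(1-\cos\ang{\bs s,\bs t})\,\mathrm d\bs s=c_q\norm{\bs t}$, with the very constant $c_q$ of Definition \ref{def: MDDO}. This yields $c_q(-\norm{\Y-\Y'}+\norm{\Y}+\norm{\Y'})$ inside the expectation; the last two terms vanish because $\E[\ang{\X',g}]=0$ and $\E[\ang{\X,f}]=0$ annihilate them under independence, leaving $-c_q\,\E[\ang{\X,f}\ang{\X',g}\norm{\Y-\Y'}]$. Dividing by $c_q$ gives $\ang{\mathrm{MDDO}(\X|\Y)f,g}=-\E[\ang{\X,f}\ang{\X',g}\norm{\Y-\Y'}]$, which is exactly $-\ang{\E[\X\otimes\X'\norm{\Y-\Y'}]f,g}$, completing the proof. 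The main obstacle is the Fubini step: its validity is not automatic for the raw kernel $G_{\bs s}\otimes\overline{G_{\bs s}}$ and rests entirely on exploiting $\E[\X]=0$ to center the exponentials, which simultaneously tames the singularity at $\bs s=0$ and legitimizes moving the norm identity inside the expectation.
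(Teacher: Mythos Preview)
Your proof is correct and follows essentially the same route as the paper: reduce to bilinear forms, use $\E[\X]=0$ together with an i.i.d.\ copy to collapse the product of covariances into a single expectation, discard odd-in-$\bs s$ terms, and apply the identity $\int_{\R^q}\norm{\bs s}^{-1-q}(1-\cos\ang{\bs s,\bs t})\,\mathrm d\bs s=c_q\norm{\bs t}$ from \cite{szekely2007measuring}. The only differences are cosmetic---you center each exponential factor separately (producing three cosine terms, two of which are killed afterwards by $\E[\ang{\X,f}]=0$) whereas the paper centers once after combining (producing a single $1-\cos\ang{\bs s,\Y-\Y'}$ term), and you are more explicit than the paper about the Fubini justification for swapping the $\bs s$-integral with the expectation.
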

Similar  
simple expressions in terms of expectation have been established for MDD in \cite{shao2014martingale}, and MDDM in \cite{lee2018martingale}. 
Next, we show some properties of MDDO which will be used in Sections \ref{section, Slicing-free IR via MDDO} and \ref{section, error}.
\begin{theorem}\label{theorem, MDDO and conditional mean independence}
Under Assumption $\ref{as:joint distribution assumption}$, the following facts hold:
\begin{itemize}
\item[$\mathrm{(i)}$]$\mathrm{MDDO}(\boldsymbol{X}|\Y)=0\Longleftrightarrow \mathbb E[\boldsymbol{X}|\Y]=0$\quad a.s.;
\item[$\mathrm{(ii)}$] $\mathrm{MDDO}(T^*{\boldsymbol{X}}|\Y)=T^*\mathrm{MDDO}({\boldsymbol{X}}|\Y)T,\quad\forall T:\mc H\to\mc H $.
\end{itemize}
\end{theorem}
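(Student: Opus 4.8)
The plan is to handle the two assertions by different routes: part~(ii) is a direct algebraic computation off the expectation form in Lemma~\ref{lemma, equivalence of two def of MDDO}, while part~(i) exploits the positive-semidefinite integral structure of the defining formula together with uniqueness of characteristic functions. For part~(ii) I would start from $\mathrm{MDDO}(\bs X|\Y)=-\mb E[\bs X\otimes\bs X'\,\|\Y-\Y'\|]$ and note that replacing $\bs X$ by $T^*\bs X$ (with $T$ a fixed bounded operator on $\mc H$, as the notation $T^*$ presupposes) leaves the response $\Y$, hence the factor $\|\Y-\Y'\|$, untouched and sends the i.i.d.\ copy $\bs X'$ to $T^*\bs X'$. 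The crux is the elementary tensor identity $(T^*x)\otimes(T^*y)=T^*(x\otimes y)T$, which I would verify by evaluating both sides on an arbitrary $z\in\mc H$: the left gives $\ang{T^*x,z}\,T^*y=\ang{x,Tz}\,T^*y$, while the right gives $T^*\bk{\ang{x,Tz}y}$, and these coincide. Since $T$ is nonrandom and bounded it may be pulled outside the expectation, yielding $\mathrm{MDDO}(T^*\bs X|\Y)=T^*\mpt{-\mb E[\bs X\otimes\bs X'\,\|\Y-\Y'\|]}T=T^*\mathrm{MDDO}(\bs X|\Y)T$; I would also remark that $\mb E[T^*\bs X]=0$ and $\mb E\|T^*\bs X\|^2\leqslant\|T\|^2\mb E\|\bs X\|^2<\infty$, so the standing hypotheses persist. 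This part is routine.

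For part~(i) the forward implication is immediate: if $\mb E[\bs X|\Y]=0$ a.s., then iterated expectation gives $G_{\bs s}(t)=\mb E[\bs X(t)e^{\mi\ang{\bs s,\Y}}]=\mb E\bk{\mb E[\bs X(t)|\Y]\,e^{\mi\ang{\bs s,\Y}}}=0$ for every $\bs s$, so the integrand vanishes and $\mathrm{MDDO}(\bs X|\Y)=0$. The substantive direction is the converse, for which I would fix $\bbeta\in\mc H$ and set $\psi_{\bbeta}(\bs s):=\ang{G_{\bs s},\bbeta}=\mb E\bk{\ang{\bs X,\bbeta}\,e^{\mi\ang{\bs s,\Y}}}$, the second equality following from $\mb E\bs X=0$ and Fubini. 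Evaluating the quadratic form of the integrand on $\bbeta$ gives $\ang{(G_{\bs s}\otimes\overline G_{\bs s})\bbeta,\bbeta}=\ang{G_{\bs s},\bbeta}\ang{\overline G_{\bs s},\bbeta}=\abs{\psi_{\bbeta}(\bs s)}^2$ (using that $\bbeta$ is real, so $\ang{\overline G_{\bs s},\bbeta}=\overline{\psi_{\bbeta}(\bs s)}$), whence
\begin{equation*}
\ang{\mathrm{MDDO}(\bs X|\Y)\bbeta,\bbeta}=\frac1{c_q}\int_{\mb R^q}\frac{\abs{\psi_{\bbeta}(\bs s)}^2}{\|\bs s\|^{1+q}}\,\mathrm d\bs s.
\end{equation*}
If $\mathrm{MDDO}(\bs X|\Y)=0$ this integral vanishes for every $\bbeta$; since its integrand is nonnegative and the weight is positive a.e., this forces $\psi_{\bbeta}(\bs s)=0$ for almost every $\bs s$, and then $\psi_{\bbeta}\equiv0$ because $\psi_{\bbeta}$ is continuous in $\bs s$ by dominated convergence under Assumption~\ref{as:joint distribution assumption}.

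It remains to decode $\psi_{\bbeta}\equiv0$. Writing $W:=\ang{\bs X,\bbeta}$ and conditioning on $\Y$, I get $\psi_{\bbeta}(\bs s)=\mb E\bk{\mb E[W|\Y]\,e^{\mi\ang{\bs s,\Y}}}$, which is the Fourier transform of the finite complex measure having density $\mb E[W|\Y]$ against the law $\P_{\Y}$; by the uniqueness theorem for characteristic functions this measure is null, i.e.\ $\ang{\mb E[\bs X|\Y],\bbeta}=\mb E[\ang{\bs X,\bbeta}\,|\,\Y]=0$ a.s. Running $\bbeta$ through a countable dense subset of the separable space $\mc H$ and intersecting the corresponding null sets upgrades this to the $\mc H$-valued identity $\mb E[\bs X|\Y]=0$ a.s., completing the equivalence. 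The delicate points I anticipate are precisely this measure-theoretic passage from the family of scalar statements to the single $\mc H$-valued conclusion (where separability is essential), and confirming that the weighted integral converges so that its vanishing legitimately forces $\psi_{\bbeta}\equiv0$; the latter holds because $\psi_{\bbeta}$ is Lipschitz with $\psi_{\bbeta}(0)=\mb E W=0$ (using $\mb E[|W|\,\|\Y\|]<\infty$ via Cauchy--Schwarz), so $\abs{\psi_{\bbeta}(\bs s)}^2\|\bs s\|^{-1-q}\lesssim\|\bs s\|^{1-q}$ near the origin, which is integrable on $\mb R^q$, while boundedness of $\psi_{\bbeta}$ controls the tail.
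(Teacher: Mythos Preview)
Your argument is correct. Part~(ii) is handled exactly as in the paper: both compute directly from the expectation form of Lemma~\ref{lemma, equivalence of two def of MDDO} using the tensor identity $(T^*x)\otimes(T^*y)=T^*(x\otimes y)T$.

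For part~(i) you take a genuinely different route. The paper reduces to the scalar quadratic form $\ang{\mathrm{MDDO}(\bs X|\Y)\bbeta,\bbeta}$, rewrites it via Lemma~\ref{lemma, equivalence of two def of MDDO} as $-\mb E\bk{\ang{\ang{\bs X,\bbeta}\bbeta,\ang{\bs X',\bbeta}\bbeta}\|\Y-\Y'\|}$, and then invokes Proposition~1 of \cite{lee2020testing} (the FMDD criterion) as a black box to get $\mb E[\ang{\bs X,\bbeta}\bbeta\,|\,\Y]=0$ a.s.; a short lemma on positive semi-definite operators ($\ang{Tx,x}=0\Leftrightarrow Tx=0$) closes the loop. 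You instead stay with the integral definition, identify the same quadratic form as $c_q^{-1}\int_{\mb R^q}\abs{\psi_{\bbeta}(\bs s)}^2\|\bs s\|^{-1-q}\,\mathrm d\bs s$, and then use nonnegativity plus Fourier uniqueness for finite signed measures to conclude $\mb E[\ang{\bs X,\bbeta}|\Y]=0$ a.s. In effect you are reproving the cited FMDD result inside the argument rather than importing it, which makes your proof self-contained at the cost of a little more analysis (the Lipschitz/integrability check and the separability step for the passage to the $\mc H$-valued statement). The paper's version is shorter but less transparent about where the Fourier-analytic content actually enters; yours exposes that mechanism explicitly and also handles the separability issue that the paper glosses over in its final equivalence.
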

These properties generalize Proposition $\ref{proposition, MDDM basic peoperties}$.
Property (i) in Theorem \ref{theorem, MDDO and conditional mean independence} means MDDO can characterize the conditional mean independence of a functional-valued predictor on a multivariate response. Based on (i), we relate the central subspace $\mc S_{\Y|\vX}$ with the image of $\mathrm{MDDO}(\bs X|\Y)$, see the next Section $\ref{section, Slicing-free IR via MDDO}$.
\section{FSFIR via MDDO}\label{section, Slicing-free IR via MDDO}
\subsection{Review of  FSIR}\label{subsection, FSIR}


Functional SDR aims to  give the intersection of all spaces $\mc S\subset\mc H$ satisfying $\Y\perp \!\!\! \perp \bs X|P_{\mathcal S} \bs X$ for 
$(\bs X,\Y)\in\mc H\times \R^{q}$. The said intersection is known as the   \textit{functional central subspace} and denoted by $\mc S_{\Y|\X}$. To estimate $\mc S_{\Y|\X}$, people often need some mild conditions on $(\X,\Y)$ explained below. 

Assume that $\mc S_{\Y|\X}$ has a basis as follows:
\begin{align}\label{def: central subspace}\mc S_{\Y|\bs{X}}=\mathrm{span}\{\bs{\beta}_1,...,\bs{\beta}_d\}.
\end{align}
\begin{assumption}\label{as:Linearity condition and Coverage condition}
The joint distribution of $(\X,\Y)\in\mc H\times \R^{q}$ satisfies
\begin{itemize}
 \item[\textbf{i)}] \textit{Linearity condition:}
 The conditional expectation $\mathbb E\left[\langle \bs b,{\boldsymbol{X}}\rangle|\langle\bs{\beta}_1,{\boldsymbol{X}}\rangle,\dots,\langle\bs{\beta}_d,{\boldsymbol{X}}\rangle\right]$ is linear in $\langle \bs{\beta}_1,{\boldsymbol{X}}\rangle,...,\langle\bs{\beta}_d,{\boldsymbol{X}}\rangle$ for any $\bs b\in\mathcal H$.
 \item[\textbf{ii)}]\textit{Coverage condition:} $\mathrm{Rank}\l\mathrm{var}(\mb E[\bs X|\Y])\r=d$.
\end{itemize}
\end{assumption}
Both these conditions generalize multivariate ones in SDR literature \citep{li1991sliced,zhu2006sliced,lin2018consistency,lin2021optimality,mai2021slicing,huang2023sliced}. 

The next Lemma $\ref{as:Linearity condition and Coverage condition}$ is the basis of many functional SDR methods such as FSIR. 
\begin{lemma}[\citealt{ferre2003functional}]\label{lemma: SE=GammaS} 
Define
\begin{align}\label{equation, IRS}
\mathcal S_{\mathbb E(\boldsymbol{X}|\Y)}:=\mathrm{span}\{\mathbb E(\boldsymbol{X}|\Y=\bs y)\mid \bs y\in\mb R^q\}.
\end{align}
Then under Assumption $\ref{as:Linearity condition and Coverage condition}$, one has
\begin{equation*}
\Gamma \mc S_{\Y|\bs X}=\mc S_{\mathbb E(\boldsymbol{X}|\Y)}=\mathrm{Im}\{\mathrm{var}(\mb E(\bs X|\Y))\},
\end{equation*}
where $\Gamma \mc S_{\Y|\bs X}:=\mathrm{span}\{\Gamma\bs{\beta}_1,...,\Gamma\bs{\beta}_d\}.$
\end{lemma}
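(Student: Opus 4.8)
The plan is to establish the two equalities in the displayed chain separately, working from the right. First I would prove the covariance--range identity $\mc S_{\E(\X|\Y)}=\Im\{\var(\E(\X|\Y))\}$, which is a purely operator-theoretic statement about the covariance operator of the centered random element $\bs Z:=\E[\X\mid\Y]\in\mc H$ and uses neither the linearity nor the coverage condition. Then I would prove the inclusion $\mc S_{\E(\X|\Y)}\subseteq\Gamma\mc{S}_{\Y|\X}$ from the linearity condition together with the sufficiency (conditional independence) that defines the central subspace, and finally upgrade this inclusion to an equality by a dimension count powered by the coverage condition.

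For the covariance--range identity, write $V:=\var(\bs Z)=\E[\bs Z\otimes\bs Z]$ (recall $\E\bs Z=\E\X=0$), a self-adjoint positive semidefinite trace-class operator. For any $h\in\mc H$ one has $Vh=\E[\ang{\bs Z,h}\bs Z]$, a Bochner integral of scalar multiples of values of $\bs Z$; since a closed subspace is invariant under Bochner integration, $Vh\in\mc S_{\E(\X|\Y)}$, giving $\Im(V)\subseteq\mc S_{\E(\X|\Y)}$. For the reverse inclusion I would identify the kernels: $Vh=0\iff\ang{Vh,h}=\E[\ang{\bs Z,h}^2]=0\iff\ang{\bs Z,h}=0$ a.s. $\iff h\perp\m(\bs y)$ for ($\P_\Y$-almost) every $\bs y$, i.e. $\mnull(V)=\mc S_{\E(\X|\Y)}^{\perp}$. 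Because $V$ is self-adjoint and compact, $\Im(V)=\mnull(V)^\perp=\mc S_{\E(\X|\Y)}$, a subspace that is finite-dimensional (hence closed) once the coverage condition is in force. The one point needing care is the passage between ``a.s.'' and ``for every $\bs y$'', which I would handle by reading $\mc S_{\E(\X|\Y)}$ as the closed span of the support of $\m(\Y)$, so that the null-space computation is exact.

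For the inclusion, introduce the synthesis operator $B:\R^d\to\mc H$, $B\bs a=\sum_{j=1}^d a_j\bbeta_j$, with adjoint $B^*h=(\ang{\bbeta_1,h},\dots,\ang{\bbeta_d,h})^\top$. The $d\times d$ matrix $B^*\Gamma B$ is the covariance of $(\ang{\bbeta_1,\X},\dots,\ang{\bbeta_d,\X})$ and is invertible because the $\bbeta_j$ are linearly independent and $\Gamma$ is nonsingular. Noting that $\sigma(P_{\mc S}\X)=\sigma(B^*\X)$, the linearity condition gives $\E[\ang{\bs b,\X}\mid B^*\X]=\sum_j a_j(\bs b)\ang{\bbeta_j,\X}$, and matching covariances against each $\ang{\bbeta_k,\X}$ yields $\bs a(\bs b)=(B^*\Gamma B)^{-1}B^*\Gamma\bs b$. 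Now for any $\bs b\in(\Gamma\mc{S}_{\Y|\X})^\perp$ we have $B^*\Gamma\bs b=0$, hence $\bs a(\bs b)=0$; combining the tower property with the conditional independence $\Y\perp \!\!\! \perp\X\mid P_{\mc S}\X$ (which lets me drop $\Y$ from the inner conditioning) gives $\ang{\bs b,\E[\X\mid\Y]}=\E[\ang{\bs b,\X}\mid\Y]=\sum_j a_j(\bs b)\ang{\bbeta_j,\E[\X\mid\Y]}=0$. Thus every $\m(\bs y)$ is orthogonal to $(\Gamma\mc{S}_{\Y|\X})^\perp$, i.e. $\mc S_{\E(\X|\Y)}\subseteq\Gamma\mc{S}_{\Y|\X}$.

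Finally I would close the gap by dimensions: since $\Gamma$ is injective and the $\bbeta_j$ are independent, $\dim\Gamma\mc{S}_{\Y|\X}=d$; the coverage condition gives $\rank\var(\E[\X\mid\Y])=d$, which by the covariance--range step equals $\dim\mc S_{\E(\X|\Y)}$. An inclusion of subspaces of equal finite dimension is an equality, so $\mc S_{\E(\X|\Y)}=\Gamma\mc{S}_{\Y|\X}$, completing the chain. I expect the main obstacle to be the sufficiency step: rigorously justifying, in the Hilbert-space setting, that $\E[\ang{\bs b,\X}\mid P_{\mc S}\X,\Y]=\E[\ang{\bs b,\X}\mid P_{\mc S}\X]$ from $\Y\perp \!\!\! \perp\X\mid P_{\mc S}\X$ and interchanging conditional expectation with the bounded linear functional $\ang{\bs b,\cdot}$, together with confirming that $B^*\Gamma B$ is genuinely invertible; by contrast, the covariance--range identity is routine operator theory.
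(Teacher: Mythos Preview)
Your proposal is correct and follows essentially the same strategy as the paper: derive the inclusion $\mc S_{\E(\X|\Y)}\subseteq\Gamma\mc S_{\Y|\X}$ from the linearity condition (the paper packages this as a separate lemma citing \cite{ferre2003functional}, while you spell out the coefficient computation $\bs a(\bs b)=(B^*\Gamma B)^{-1}B^*\Gamma\bs b$ explicitly), and then close by a dimension count using the coverage condition. The only notable difference is in the covariance--range step: you prove $\mc S_{\E(\X|\Y)}=\Im\{\var(\E(\X|\Y))\}$ as a standalone equality via the kernel characterization $\mnull(V)=\mc S_{\E(\X|\Y)}^\perp$, whereas the paper only shows the one-sided inclusion $\overline{\Im}\{\var(\E(\X|\Y))\}\subseteq\overline{\mc S_{\E(\X|\Y)}}$ and then sandwiches $\overline{\Im}\{\var\}\subseteq\overline{\mc S_{\E(\X|\Y)}}\subseteq\overline{\Gamma\mc S_{\Y|\X}}$ between two $d$-dimensional spaces to force all three equalities at once.
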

Based on this, one can estimate the central subspace $\mc S_{\Y|\vX}$ by estimating the eigenspace of $\Gamma^{-1} \mathrm{var}(\mb E[\bs X|\Y])$. 
While the central space is well-defined for both univariate and multivariate responses, most existing SDR methods, such as FSIR \citep{ferre2003functional}, mainly focus on the case of univariate response. 
Extending these methods to handle multivariate response is a challenging task.
The FSIR  procedures for estimating  $\mathrm{var}(\mb E[\bs X|Y])$ with univariate response can be briefly summarized as follows. Given $n$ i.i.d. samples $\{(\bs X_i,Y_{i})\}_{i\in[n]}$, 
 FSIR divides the samples into $H$ equally-sized slices according to the order statistics $Y_{(i)}$ and estimates $\mathrm{var}[\mb E(\bs X|Y)]$ by
\begin{equation}
 \dfrac1H\sum\limits_{h=1}^H\overline{\bs {X}}_{h,\cdot}\otimes\overline{\bs {X}}_{h,\cdot},
\end{equation}
where $\overline{\bs {X}}_{h,\cdot}$ is the sample mean of the $h$-th slice. 

In general, the consistency and convergence rate
of SIR depend on the slice number $H$. Choosing a suitable slice number $H$ in SIR is a difficulty. 
 On one hand, too small $H$ yields too much samples in each slice. Then SIR can not fully characterize the dependence of the predictor on the response, which will cause a large bias. On the other hand, a small amount of samples in each slice yields a large variance of the estimation \citep{zhu1995asymptotics}.
To avoid this difficulty, we propose the method of FSFIR for multivariate response  in the following section (see also section $\ref{subsection, estimation method}$).
\subsection{Principle of FSFIR}\label{subsection, MDDO in SDR}
In this section, we lay the foundation of FSFIR.
The following conclusion about MDDO is needed.
\begin{theorem}\label{theorem, MDDO and IRS}
Under Assumptions $\ref{as:joint distribution assumption}$ and $\ref{as:Linearity condition and Coverage condition}$, we have
\[
{\mathcal{S}_{\mathbb E(\boldsymbol{X}|\Y)}}={\mathrm{Im}}\{\mathrm{MDDO}(\boldsymbol{X}|\Y)\}.
\]
\end{theorem}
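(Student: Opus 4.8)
The plan is to prove the two subspaces coincide by realizing them as (closures of) the ranges of positive semi-definite self-adjoint operators and showing these operators share the same null space. First I would invoke Lemma \ref{lemma: SE=GammaS} to rewrite the goal as $\mathrm{Im}\{\mathrm{var}(\mathbb{E}(\boldsymbol{X}|\Y))\}=\mathrm{Im}\{\mathrm{MDDO}(\boldsymbol{X}|\Y)\}$, so everything reduces to comparing $A:=\mathrm{var}(\mathbb{E}(\boldsymbol{X}|\Y))$ and $B:=\mathrm{MDDO}(\boldsymbol{X}|\Y)$. Both are bounded, self-adjoint and positive semi-definite (indeed trace class under Assumption \ref{as:joint distribution assumption}), so the standard Hilbert-space identity $\mathrm{Im}(T)=\mathrm{null}(T)^\perp$ (with $\mathrm{Im}$ the closure, as in the paper's convention) turns the claim into proving $\mathrm{null}(A)=\mathrm{null}(B)$.

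The core of the argument is a single characterization of each null space through conditional mean independence of one-dimensional projections. For any $\bs b\in\mc H$ I would establish the chain
\[
\bs b\in\mathrm{null}(B)\iff\langle B\bs b,\bs b\rangle=0\iff\mathbb{E}[\langle\boldsymbol{X},\bs b\rangle\mid\Y]=0\ \text{a.s.},
\]
together with the identical chain for $A$; equality of the null spaces is then immediate. The first equivalence in each chain is the elementary fact that for positive semi-definite $T$ one has $\langle T\bs b,\bs b\rangle=0\Leftrightarrow T\bs b=0$ (Cauchy--Schwarz for the semi-inner product $\langle T\cdot,\cdot\rangle$). The content is the second equivalence.

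For $B$, the key computation uses the expectation form from Lemma \ref{lemma, equivalence of two def of MDDO}: since $(\boldsymbol{X}\otimes\boldsymbol{X}')\bs b=\langle\boldsymbol{X},\bs b\rangle\boldsymbol{X}'$, one gets
\[
\langle\mathrm{MDDO}(\boldsymbol{X}|\Y)\bs b,\bs b\rangle=-\mathbb{E}\bigl[\langle\boldsymbol{X},\bs b\rangle\,\langle\boldsymbol{X}',\bs b\rangle\,\|\Y-\Y'\|\bigr]=\mathrm{MDDM}(\langle\boldsymbol{X},\bs b\rangle\mid\Y),
\]
i.e.\ the quadratic form of MDDO along $\bs b$ is exactly the scalar MDDM of the projection $\langle\boldsymbol{X},\bs b\rangle$. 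Because $\mathbb{E}[\boldsymbol{X}]=0$, Proposition \ref{proposition, MDDM basic peoperties}(i) (with $p=1$) then gives $\langle B\bs b,\bs b\rangle=0\Leftrightarrow\mathbb{E}[\langle\boldsymbol{X},\bs b\rangle\mid\Y]=0$ a.s.\ (the same conclusion follows alternatively from property (ii) of Theorem \ref{theorem, MDDO and conditional mean independence} applied to a rank-one $T$ combined with property (i)). For $A$ the parallel computation is the routine identity $\langle A\bs b,\bs b\rangle=\mathrm{var}\bigl(\mathbb{E}[\langle\boldsymbol{X},\bs b\rangle\mid\Y]\bigr)$, which vanishes iff $\mathbb{E}[\langle\boldsymbol{X},\bs b\rangle\mid\Y]$ is a.s.\ constant, hence—its mean being $0$—iff it is $0$ a.s. Combining the two chains yields $\mathrm{null}(A)=\mathrm{null}(B)$, and the theorem follows.

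I expect the main obstacle to be the infinite-dimensional bookkeeping of the first paragraph rather than any deep idea: one must verify that $A$ and $B$ are genuinely bounded self-adjoint (trace class, via $\mathbb{E}[\|\boldsymbol{X}\|^2]<\infty$ and Assumption \ref{as:joint distribution assumption}) so that $\mathrm{Im}(T)=\mathrm{null}(T)^\perp$ applies with the closure convention, and that interchanging the inner product with the expectation in the quadratic-form computations is justified by the same moment bounds. Once the scalar reduction to MDDM is in place, the conditional-mean-independence equivalences are clean.
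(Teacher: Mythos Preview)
Your proof is correct and follows essentially the same strategy as the paper: reduce to showing the orthogonal complements (equivalently, null spaces) coincide by establishing the equivalence chain through $\mathbb{E}[\langle\boldsymbol{X},\bs b\rangle\mid\Y]=0$ a.s., using the fact that $\langle T\bs b,\bs b\rangle=0\Leftrightarrow T\bs b=0$ for positive semi-definite $T$ and the scalar MDD/MDDM characterization of conditional mean independence.

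The only difference is a small detour on your side: you first invoke Lemma~\ref{lemma: SE=GammaS} to replace $\mathcal{S}_{\mathbb{E}(\boldsymbol{X}|\Y)}$ by $\mathrm{Im}\{\mathrm{var}(\mathbb{E}(\boldsymbol{X}|\Y))\}$ and then compare the null spaces of $A=\mathrm{var}(\mathbb{E}(\boldsymbol{X}|\Y))$ and $B=\mathrm{MDDO}(\boldsymbol{X}|\Y)$. The paper skips this and works directly with $\mathcal{S}_{\mathbb{E}(\boldsymbol{X}|\Y)}^{\perp}$, observing that $\bs b\perp\mathcal{S}_{\mathbb{E}(\boldsymbol{X}|\Y)}\Leftrightarrow\langle\bs b,\mathbb{E}(\boldsymbol{X}|\Y)\rangle=0$ a.s., which leads to the same conditional-mean condition without passing through $A$. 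Your route is slightly less economical (and leans on Lemma~\ref{lemma: SE=GammaS}, hence on the linearity condition, for a step where it is not strictly needed), but it is entirely valid and the substance of the argument is the same.
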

This result generalizes \cite[Proposition 2]{mai2021slicing}.
Combining Theorem $\ref{theorem, MDDO and IRS}$ with Lemma $\ref{lemma: SE=GammaS}$, we can derive that:
\begin{corollary}
\label{corollary, MDDO and central subspace}
Under Assumptions $\ref{as:joint distribution assumption}$ and $\ref{as:Linearity condition and Coverage condition}$,
we have
\[\Gamma \mc S_{\Y|\vX}={\mathcal{S}_{\mathbb E(\boldsymbol{X}|\Y)}}=\mathrm{Im}\{\mathrm{MDDO}(\boldsymbol{X}|\Y)\}.\]
\end{corollary}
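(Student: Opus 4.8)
The plan is to obtain the stated chain of identities by directly concatenating the two results already established, since Corollary \ref{corollary, MDDO and central subspace} is precisely the transitive composition of Lemma \ref{lemma: SE=GammaS} and Theorem \ref{theorem, MDDO and IRS} through the common intermediate object $\mc S_{\mathbb E(\boldsymbol{X}|\Y)}$. First I would invoke Lemma \ref{lemma: SE=GammaS} (the Ferré--Yao identity): under the linearity and coverage conditions of Assumption \ref{as:Linearity condition and Coverage condition}, it yields $\Gamma \mc S_{\Y|\vX}=\mc S_{\mathbb E(\boldsymbol{X}|\Y)}$ (and moreover equals $\mathrm{Im}\{\mathrm{var}(\mb E(\bs X|\Y))\}$, though only the first equality is needed here). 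Then I would invoke Theorem \ref{theorem, MDDO and IRS}: under Assumptions \ref{as:joint distribution assumption} and \ref{as:Linearity condition and Coverage condition} it gives $\mc S_{\mathbb E(\boldsymbol{X}|\Y)}=\mathrm{Im}\{\mathrm{MDDO}(\boldsymbol{X}|\Y)\}$. Chaining the two equalities through their shared middle term $\mc S_{\mathbb E(\boldsymbol{X}|\Y)}$ delivers the corollary.

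The only point requiring care is assumption bookkeeping: Lemma \ref{lemma: SE=GammaS} is stated under Assumption \ref{as:Linearity condition and Coverage condition} alone, whereas Theorem \ref{theorem, MDDO and IRS} additionally requires the second-order-moment Assumption \ref{as:joint distribution assumption}; since the corollary hypothesizes both, each cited result is applicable as-is, so no strengthening is needed. I would also note that the intermediate span $\mc S_{\mathbb E(\boldsymbol{X}|\Y)}$ appearing in both statements is literally the same object, defined once in \eqref{equation, IRS}, so there is no reconciliation of notation to perform before composing.

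There is effectively no obstacle internal to this corollary --- it is a formal consequence of the two preceding results. The genuine content lives upstream in Theorem \ref{theorem, MDDO and IRS}, which I take as given here; were I to prove that theorem I would expect the crux to be identifying $\mathrm{Im}\{\mathrm{MDDO}(\boldsymbol{X}|\Y)\}$ with the inverse-regression span, presumably by using property (i) of Theorem \ref{theorem, MDDO and conditional mean independence} to tie the null space of $\mathrm{MDDO}(\boldsymbol{X}|\Y)$ to conditional-mean independence and thereby match it against the null space of $\mathrm{var}(\mb E(\bs X|\Y))$, then passing to orthogonal complements. For the corollary itself, the composition above is the entire argument.
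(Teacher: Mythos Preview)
Your proposal is correct and matches the paper's approach exactly: the paper simply states that the corollary follows by combining Theorem~\ref{theorem, MDDO and IRS} with Lemma~\ref{lemma: SE=GammaS}, which is precisely the transitive chaining through $\mc S_{\mathbb E(\boldsymbol{X}|\Y)}$ that you describe. Your assumption bookkeeping is also accurate and a bit more explicit than the paper's one-line remark.
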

\section{FSFIR Algorithm and Corresponding Asymptotic Properties}\label{section, error}
Based on the above Corollary $\ref{corollary, MDDO and central subspace}$, we can estimate the central subspace $\mc S_{\Y|\vX}$ by estimating the eigen-space of $\Gamma^{- 1}\mathrm{MDDO}(\bs X|\Y)$ without specifying any slice scheme. So in this part, we develop the procedures of FSFIR in Section $\ref{subsection, estimation method}$ where a truncation scheme is adopted due to the unboundness of $\Gamma^{-1}$. 
Then in Section $\ref{subsection, convergence rate}$ we give the specific convergence rate of FSFIR for estimating $\mc S_{\Y|\vX}$. 
\subsection{FSFIR algorithm}\label{subsection, estimation method}
In general, it is not possible to estimate $\Gamma^{-1}$ directly using $\wh\Gamma^{\dagger}$, which is the pseudo-inverse of the sample covariance operator $\widehat{\Gamma}:=\frac{1}{n}\sum^{n}_{i=1}\boldsymbol{X}_{i}\otimes \boldsymbol{X}_{i}$. This is because $\Gamma$ is a compact operator (trace class) and $\Gamma^{-1}$ is unbounded.
To address this issue, various approaches have been proposed, such as truncation on the covariance operator \citep{ferre2003functional,chen2023optimality} and ridge-type regularization \citep{lian2015functional}. 
Our way to overcome this issue is to do truncation on the predictor (see also \cite{li2010deciding}).

Now we state our truncation scheme. For a smoothing parameter $m$ satisfying 
\begin{align}\label{eq: m n relationship}
m=n^{c_1},\quad c_1\in(0,1), 
\end{align}
we define the truncated predictor as follows:
\begin{align*}
\X^{(m)}:=\Pi_m \bs X=\sum\limits_{j=1}^m\sqrt{\lambda_j}\omega_j\phi_j
\end{align*}
where $\Pi_m:=\sum\limits_{i=1}^m\phi_{i}\otimes\phi_i$. 
Accordingly, the truncated covariance operator $\Gamma_m:=\mathrm{var}(\bs X^{(m)})$ and the pseudo-inverse of $\Gamma_m$ satisfy:
\begin{align*}
\Gamma_m&=\sum\limits_{i=1}^m\lambda_i\phi_i\otimes\phi_i\quad\text{and}\quad \Gamma^\dag_m=\sum\limits_{i=1}^m\lambda_i^{-1}\phi_i\otimes\phi_i
\end{align*}
respectively. 
In accordance with the truncation on the predictor $\X$, we turn to estimate the \textit{truncated central subspace} $\mc S_{\Y|\bs X}^{(m)}$:
\begin{align}\label{def: truncated central subspace}\mc S_{\Y|\bs X}^{(m)}=\Pi_m \mc S_{\Y|\bs X}=\mathrm{span}\{\bs{\beta}_1^{(m)},\dots,\bs{\beta}_d^{(m)}\},\end{align}
 where $\bs{\beta}_{k}^{(m)}:=\Pi_m(\bs{\beta}_k)~\text{for}~k=1,\dots,d$.
To estimate $\mc S_{\Y|\bs X}^{(m)}$, we need a fundamental lemma.
\begin{lemma}\label{lemma, way of estimate truncate central subspace}
Under Assumption $\ref{as:joint distribution assumption}$, we have:
\begin{align*}
\mathcal{S}^{(m)}_{{\Y|\boldsymbol{X}}}=\Gamma_m^\dagger\mathrm{Im}\{\mathrm{MDDO}(\boldsymbol{X}^{(m)}|\Y)\}.
\end{align*}
\end{lemma}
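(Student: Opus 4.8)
The plan is to reduce this truncated identity to the global one in Corollary~\ref{corollary, MDDO and central subspace} by exploiting the compatibility between the spectral projection $\Pi_m$ and the covariance operator. First I would record the two structural inputs. Applying Theorem~\ref{theorem, MDDO and conditional mean independence}(ii) to the self-adjoint projection $T=\Pi_m$ (so that $T^{*}=\Pi_m$ and $\Pi_m\boldsymbol{X}=\boldsymbol{X}^{(m)}$) yields
\[
\mathrm{MDDO}(\boldsymbol{X}^{(m)}|\Y)=\Pi_m\,\mathrm{MDDO}(\boldsymbol{X}|\Y)\,\Pi_m .
\]
Next, Corollary~\ref{corollary, MDDO and central subspace} tells us that $M:=\mathrm{MDDO}(\boldsymbol{X}|\Y)$ is a positive semidefinite operator of finite rank $d$ with $\mathrm{Im}(M)=\Gamma\,\mathcal{S}_{\Y|\boldsymbol{X}}$. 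Writing its spectral decomposition $M=\sum_{k=1}^{d}\mu_k\,e_k\otimes e_k$ with $\mu_k>0$ and $\{e_k\}$ an orthonormal basis of $\Gamma\,\mathcal{S}_{\Y|\boldsymbol{X}}$, self-adjointness of $\Pi_m$ gives $\Pi_m M\Pi_m=\sum_{k=1}^{d}\mu_k\,(\Pi_m e_k)\otimes(\Pi_m e_k)$.

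The core step is to compute the image of this compressed operator. I would first prove the elementary fact that a finite-rank positive semidefinite operator $A=\sum_{k}\mu_k\,f_k\otimes f_k$ with $\mu_k>0$ satisfies $\mathrm{Im}(A)=\mathrm{span}\{f_k\}$: the inclusion $\subseteq$ is immediate, and for the reverse one notes that any $v\in\mathrm{span}\{f_k\}$ with $v\perp\mathrm{Im}(A)$ lies in $\mathrm{null}(A)$, so $0=\langle v,Av\rangle=\sum_k\mu_k|\langle f_k,v\rangle|^{2}$ forces $v\perp\mathrm{span}\{f_k\}$ and hence $v=0$. Applying this with $f_k=\Pi_m e_k$ gives
\[
\mathrm{Im}\{\mathrm{MDDO}(\boldsymbol{X}^{(m)}|\Y)\}=\mathrm{span}\{\Pi_m e_1,\dots,\Pi_m e_d\}=\Pi_m\,\mathrm{Im}(M)=\Pi_m\Gamma\,\mathcal{S}_{\Y|\boldsymbol{X}} .
\]
The finite-rank conclusion of the coverage condition is what makes this clean, since it removes the closure subtleties that the tensor-sum argument would otherwise face in infinite dimensions.

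Finally I would apply $\Gamma_m^{\dagger}$ and simplify using two identities read directly off the Karhunen--Lo\'{e}ve basis. Because $\Gamma_m^{\dagger}=\sum_{i=1}^{m}\lambda_i^{-1}\phi_i\otimes\phi_i$ annihilates $\mathrm{Im}(\Pi_m)^{\perp}$ and inverts $\Gamma_m$ on $\mathrm{Im}(\Pi_m)$, one has $\Gamma_m^{\dagger}\Pi_m=\Gamma_m^{\dagger}$, while a one-line coordinate computation gives $\Gamma_m^{\dagger}\Gamma\beta=\sum_{i=1}^{m}\langle\phi_i,\beta\rangle\phi_i=\Pi_m\beta$ for every $\beta\in\mathcal{H}$. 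Chaining these,
\[
\Gamma_m^{\dagger}\,\mathrm{Im}\{\mathrm{MDDO}(\boldsymbol{X}^{(m)}|\Y)\}=\Gamma_m^{\dagger}\Pi_m\Gamma\,\mathcal{S}_{\Y|\boldsymbol{X}}=\Gamma_m^{\dagger}\Gamma\,\mathcal{S}_{\Y|\boldsymbol{X}}=\Pi_m\,\mathcal{S}_{\Y|\boldsymbol{X}}=\mathcal{S}^{(m)}_{\Y|\boldsymbol{X}},
\]
which is the desired identity.

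I expect the main obstacle to be the image identity $\mathrm{Im}(\Pi_m M\Pi_m)=\Pi_m\,\mathrm{Im}(M)$: this is the only genuinely operator-theoretic step rather than bookkeeping, and it is precisely where the rank-$d$ (coverage) hypothesis is needed to avoid infinite-dimensional closure issues. A secondary point to check carefully is the domain matching when $\Gamma_m^{\dagger}$ is applied, namely the verifications $\Gamma_m^{\dagger}\Pi_m=\Gamma_m^{\dagger}$ and that $\Gamma_m^{\dagger}\Gamma$ restricts to $\Pi_m$ on the finite-dimensional subspace of interest rather than globally.
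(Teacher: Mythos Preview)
Your proposal is correct and follows essentially the same route as the paper: invoke Corollary~\ref{corollary, MDDO and central subspace} for $\mathrm{Im}(M)=\Gamma\mathcal{S}_{\Y|\boldsymbol{X}}$, use Theorem~\ref{theorem, MDDO and conditional mean independence}(ii) to write $M_m=\Pi_m M\Pi_m$, establish $\mathrm{Im}(\Pi_m M\Pi_m)=\Pi_m\,\mathrm{Im}(M)$, and then unwind with $\Gamma_m^\dagger$. The only difference is in how the image identity is proved: the paper isolates a general lemma $\overline{\mathrm{Im}}(PBP^{*})=\overline{\mathrm{Im}}(PB)$ for positive semidefinite $B$ (proved via $\mathrm{null}(BP^{*})=\mathrm{null}(PBP^{*})$) and then drops closures by finite rank, whereas you argue directly from the spectral decomposition of $M$; both arguments are short and equivalent here, with the paper's version being slightly more reusable and yours slightly more self-contained. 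Note that, like the paper's proof, your argument actually uses Assumption~\ref{as:Linearity condition and Coverage condition} (through Corollary~\ref{corollary, MDDO and central subspace}), even though the lemma as stated only lists Assumption~\ref{as:joint distribution assumption}.
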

Consequently, it amounts to estimating $\Gamma_m^\dagger\mathrm{Im}\{\mathrm{MDDO}(\boldsymbol{X}^{(m)}|\Y)\}$. To this end, we estimate both $\Gamma_m^\dagger$ and $\mathrm{Im}\{\mathrm{MDDO}(\boldsymbol{X}^{(m)}|\Y)\}$. 
From now on, we abbreviate
$\mathrm{MDDO}(\boldsymbol{X}|\Y)$, $\mathrm{MDDO}(\boldsymbol{X}^{(m)}|\Y)$ and $\widehat{\mathrm{MDDO}}(\boldsymbol{X}^{(m)}|\Y)$ to $M$, $M_m$ and $\widehat M_m$ respectively. 

We are in a position to state the estimation procedures of $\mathcal{S}^{(m)}_{{\Y|\boldsymbol{X}}}$.
Define the estimator of $\Gamma_m$ and $M_m$ as follows:
\begin{align*}
\widehat{\Gamma}_m:=\frac{1}{n}\sum_{i=1}^n \bs X_i^{(m)}\otimes \bs X_i^{(m)};\quad\wh M_m:=- \frac{1}{n^2}\sum_{j,k=1}^n\bs X_j^{(m)}\otimes \bs X_k^{(m)}\|\Y_j-\Y_k\|
\end{align*}
where $\bs X_i^{(m)}:=\Pi_m\bs X_i,i=1,...,n$.
Then the estimator of $\mc S_{\Y|\bs X}^{(m)}$ can be defined by
\begin{align}\label{eq:def of hat Sm}
\wh{\mc S}_{\Y|\bs X}^{(m)}:=\widehat{\Gamma}_m^\dagger\mathrm{Im}^d\lb\widehat M_m\rb, 
\end{align}
where $\mathrm{Im}^d\lb\widehat M_m\rb$ denotes the space spanned by the top $d$ eigenfunctions of $\wh M_m$.

An equivalent definition of $\wh{\mc S}_{Y|\bs X}^{(m)}$ can be derived as follows: define $\wh M_m^d$ by:
\begin{align}\label{wh M_m spectral decomposition}\wh M_m=\sum_{i=1}^\infty\wh\mu_i\wh\gamma_i\otimes\wh\gamma_i\quad\text{and}\quad \wh M_m^d:=\sum_{i=1}^d\wh\mu_i\wh\gamma_i\otimes\wh\gamma_i,\end{align}
where $\lb\wh\mu_i\rb_{i\in\Z_{\geq1}}$ are eigenvalues with descending order of $\wh M_m$, and $\wh\gamma_i$'s are eigenfunctions of $\wh M_m$ associated with $\wh\mu_i$. 
Then
\begin{align}\label{def: estimator central subspace}
\wh{\mc S}_{\bs{Y}|\bs X}^{(m)}= \mathrm{Im}\lb\widehat{\Gamma}_m^\dagger\widehat M_m^d\rb.\end{align}

Based on \eqref{eq:def of hat Sm}, we introduce detailed FSFIR procedures in Algorithm $\ref{alg:slicing-free}$.
\begin{algorithm}[H]
\setstretch{1.5}
\begin{algorithmic}
\caption {FSFIR. }\label{alg:slicing-free} 
\State 
\begin{enumerate}
\item Standardize $\{\X_i,1\leqslant i\leqslant n\}$, i.e., $\bs Z_i=\bs X_i-n^{- 1}\sum_{i=1}^n \bs X_{i};$
\item Do truncation: 
choose some $m$ and then obtain
 $\bs Z^{(m)}_{i}=\Pi_{m}\bs Z_{i}$;
\item Form the estimator $\widehat M_m$ and $\widehat\Gamma_m$ according to 
 \begin{equation*}
\widehat M_m=-\frac{1}{n^{2}}\sum_{j,k=1}^n\bs Z_j^{(m)}\otimes \bs Z_k^{(m)}\|\Y_j-\Y_k\|\quad\text{and}\quad \widehat\Gamma_m=\frac1n\sum_{i=1}^n \bs Z_i^{(m)} \otimes \bs Z_i^{(m)}
 \end{equation*}
 respectively;
\item Find the top $d$ eigenfunctions of $ \widehat{M}_{m}$ and denote them by $\widehat{\gamma}_{k} (k=1,\ldots,d)$;
\item Figure out $\widehat{\bs{\beta}}_{k}=\widehat{\Gamma}^{\dagger}_{m}\widehat{\gamma}_{k}(k=1,\dots,d$);
\end{enumerate}
\State  Return $\widehat{\mc S}_{Y|\bs X}^{(m)}=\mathrm{span}\lb\widehat{\bs{\beta}}_1,...,\widehat{\bs{\beta}}_d\rb$.
\end{algorithmic}
\end{algorithm}


Our FSFIR algorithm provides an optimal selection criterion for $m$, 
precisely $m\propto n^{\frac{1-2\gamma}{2\alpha_1+2\alpha_2+1}}$  for arbitrary $\gamma\in\l0,\tfrac{5}{4(\alpha_1+\alpha_2+3)}\r$, $\alpha_1$ and $\alpha_2$ are defined in Assumption $\ref{assumption: rate-type condition}$.
In practice, it should be better to choose $m=tn^{\frac{1-2\gamma}{2\alpha_1+2\alpha_2+1}}$ for some $t\in[1/\log(n),\log(n)]$ which may be determined by cross-validation.

\begin{remark}
Note that the estimation method in Algorithm $\ref{alg:slicing-free}$ can be realized without specifying any slice number $H$. For the effectiveness of FSFIR, see the next Section $\ref{subsection, convergence rate}$.
\end{remark}

\subsection{Convergence rate of FSFIR estimator}\label{subsection, convergence rate}
 Before stating our main result, we need a uniform sub-Gaussian assumption.
\begin{assumption}\label{assumption: sub-Gaussian}
There exist two positive constants $\sigma_0$ and $\sigma_1$ such that\begin{equation}\label{eq:uniform subgaussian bound}
 \sup_m\max_{1\leqslant j\leqslant m}\mathbb E\left[\exp\big(2\sigma_0\langle \boldsymbol{X},\phi_j\rangle^2\big)\right]\leqslant \sigma_1\quad\text{and} \quad \mathbb E\left[\exp\big(2\sigma_0\|\Y\|^2\big)\right]\leqslant \sigma_1. 
 \end{equation}
 \end{assumption}
These inequalities  generalize \cite[Condition (C1)]{mai2021slicing}.
Similar sub-Gaussian type conditions are commonly used in SIR literature \citep{lin2018consistency,lin2019sparse,lin2021optimality,huang2023sliced}.
We now derive a large deviation inequality between $\widehat M_m^d$ and $M_m$.
\begin{proposition}\label{prop:bound hatMmd Mm}
Under Assumptions $\ref{as:joint distribution assumption}$ and $\ref{assumption: sub-Gaussian}$, for all $\gamma\in(0,1/2)$, there exist positive constants $D_0=D_0(\gamma,\sigma_0,\sigma_1)$, $D_1=D_1(\sigma_1)$, $D_2=D_2(\sigma_0,\sigma_1)$ and $n_0=n_0(\gamma,\sigma_0,\sigma_1)$ such that for all $n\geqslant n_0$ and
$$C\in \l D_0n^{\frac{2\gamma}{5}}-\ln\l D_1m^2n \r,D_2 n^{\frac{1}{5}}-\ln\l D_1m^2n \r \rmi,$$ 
we have
\begin{equation*}
\mathbb{P}\l\|\wh M_m^d- M_m\| <\l \frac{C+\ln( D_1m^2n)}{D_2}\r^{\frac52}\frac{24m}{\sqrt n}\r\geqslant 1-\exp(- C).
\end{equation*}
\end{proposition}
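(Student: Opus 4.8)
The plan is to split the claim into a deterministic spectral reduction and a single probabilistic concentration bound. First I would observe that $M_m$ has rank at most $d$: applying Theorem~\ref{theorem, MDDO and conditional mean independence}(ii) with the self-adjoint projection $T=\Pi_m$ gives $M_m=\mathrm{MDDO}(\boldsymbol X^{(m)}|\Y)=\Pi_m M\Pi_m$, whose rank is bounded by $\mathrm{rank}(M)=d$ (Corollary~\ref{corollary, MDDO and central subspace}); hence $\sigma_{d+1}(M_m)=0$. Since the sample operator $\widehat M_m$, like the sample MDDM, is positive semidefinite — because the Euclidean distance $\|\Y_j-\Y_k\|$ is a conditionally negative-definite kernel — its top-$d$ eigenprojection $\widehat M_m^d$ is exactly its best rank-$d$ approximation in operator norm, so $\|\widehat M_m-\widehat M_m^d\|=\lambda_{d+1}(\widehat M_m)=\sigma_{d+1}(\widehat M_m)$. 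Weyl's inequality then gives $\sigma_{d+1}(\widehat M_m)=|\sigma_{d+1}(\widehat M_m)-\sigma_{d+1}(M_m)|\le\|\widehat M_m-M_m\|$, whence
$$\|\widehat M_m^d-M_m\|\le\|\widehat M_m-\widehat M_m^d\|+\|\widehat M_m-M_m\|\le 2\,\|\widehat M_m-M_m\|.$$
It therefore suffices to prove the stated large-deviation bound for $\|\widehat M_m-M_m\|$, the factor $2$ being absorbed into the constant $24$.

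Next I would set up the concentration. After deleting the vanishing diagonal terms, $\widehat M_m$ is a second-order V-statistic whose symmetrized kernel $-\tfrac12\big(\boldsymbol X_a^{(m)}\otimes\boldsymbol X_b^{(m)}+\boldsymbol X_b^{(m)}\otimes\boldsymbol X_a^{(m)}\big)\|\Y_a-\Y_b\|$ has mean $M_m$. I would control the operator norm through the Hilbert--Schmidt norm and the coordinate representation in the basis $\{\phi_j\}_{j\le m}$: writing $(\widehat M_m-M_m)_{jl}$ for the entries, $\|\widehat M_m-M_m\|\le\|\widehat M_m-M_m\|_{HS}\le m\max_{1\le j,l\le m}|(\widehat M_m-M_m)_{jl}|$, which is the source of the dimensional factor $m$ in the bound. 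Each entry is a scalar V-statistic with kernel $-\langle\boldsymbol X_a,\phi_j\rangle\langle\boldsymbol X_b,\phi_l\rangle\|\Y_a-\Y_b\|$ and mean $(M_m)_{jl}$, so the task reduces to a concentration bound uniform over $j,l\le m$.

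The engine is truncation balanced against union bounds. I would introduce the good event $E$ on which $|\langle\boldsymbol X_a,\phi_j\rangle|\le\sqrt{a_\ast}$ for all $a\le n,\ j\le m$ and $\|\Y_a\|\le\sqrt{b_\ast}$ for all $a\le n$. Off $E$, Assumption~\ref{assumption: sub-Gaussian} with Markov's inequality on the sub-Gaussian moment generating functions, plus a union bound over the at most $nm$ coordinate events, gives $\mathbb P(E^c)\lesssim nm\,\sigma_1 e^{-2\sigma_0 a_\ast}+n\,\sigma_1 e^{-2\sigma_0 b_\ast}$, which is the origin of the threshold $\ln(D_1 m^2 n)$. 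On $E$ the per-entry kernel is bounded by $2a_\ast\sqrt{b_\ast}$, so after a Hoeffding decomposition a Bernstein/Hoeffding inequality for bounded U-statistics yields, for each entry, a deviation of order $a_\ast\sqrt{b_\ast}\sqrt{(C+\ln(D_1 m^2 n))/n}$ with the desired exponential control, and the union bound over the $m^2$ entries supplies the $\ln m^2$ part of the threshold. Writing $w:=C+\ln(D_1 m^2 n)$ and choosing $a_\ast,b_\ast\asymp w/\sigma_0$, the product of the truncation levels times the $n^{-1/2}$ U-statistic rate collapses to $(w/D_2)^{5/2}\,24m/\sqrt n$; the admissible range of $C$ is precisely the regime in which the radii keep $\mathbb P(E^c)\le e^{-C}$ (lower end $w\gtrsim n^{2\gamma/5}$) while keeping the Bernstein estimate in its sub-Gaussian rather than its sub-exponential regime (upper end $w\lesssim n^{1/5}$), and $n\ge n_0$ absorbs the truncation bias $\mathbb E[\,\cdot\,\mathbf 1_{E^c}]$ and the $O(1/n)$ gap between the V- and U-statistic normalizations.

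The main obstacle will be this third step: the kernel is a product of sub-Gaussian factors and is therefore genuinely heavy-tailed (sub-Weibull), the dimension $m=n^{c_1}$ grows with $n$, and the estimator is a non-independent U-statistic, so the radii $a_\ast,b_\ast$ must be chosen delicately so as to simultaneously make the truncation bias negligible, keep the bounded-kernel Bernstein bound valid, and balance the dimensional factor $m$ against the coordinate and entrywise union bounds, so that the combined power of $w$ comes out as exactly $5/2$ and the range of $C$ as stated. Producing the precise exponent and the explicit constants $D_0,D_1,D_2$ and $24$ — rather than merely a generic poly-logarithmic factor — is the delicate (though essentially routine) heart of the argument; by comparison, the spectral reduction of the first step and the sub-Gaussian tail estimates are standard.
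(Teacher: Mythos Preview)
Your two-step plan---the deterministic spectral reduction $\|\widehat M_m^d-M_m\|\le 2\|\widehat M_m-M_m\|$ via Weyl's inequality using $\mathrm{rank}(M_m)\le d$, followed by an entrywise concentration bound for $\widehat M_m-M_m$ in the basis $\{\phi_j\}_{j\le m}$ and the crude estimate $\|\cdot\|\le m\|\cdot\|_{\max}$---is exactly the paper's approach. The only difference is packaging: the paper does not re-derive the per-entry concentration but simply invokes \cite[Theorem~1]{mai2021slicing} (recorded as Lemma~\ref{lemma, concentration of MDDOnm}), which already gives $\mathbb{P}(\|\widehat M_m-M_m\|_{\max}>12\varepsilon)\le C_1 m^2 n\exp\{-C_2(\varepsilon^2 n)^{1/5}\}$, and then reparametrizes to obtain the $5/2$ exponent and the $12m/\sqrt{n}$ factor; doubling via the spectral step yields $24m/\sqrt{n}$. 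Your truncation-plus-bounded-U-statistic sketch is the natural way to prove that cited lemma and has the right ingredients, but be careful with the bookkeeping: as you have written it, with $a_\ast,b_\ast\asymp w$ the bounded-kernel Hoeffding deviation is $a_\ast\sqrt{b_\ast}\cdot\sqrt{w/n}\asymp w^{2}/\sqrt{n}$, not $w^{5/2}/\sqrt{n}$; the exponent $5/2$ in the statement is inherited verbatim from inverting the $(\varepsilon^2 n)^{1/5}$ tail of the Mai--Zhang bound rather than from your particular choice of truncation radii, so if you carry out your argument you may well land on a slightly different (possibly sharper) power of the log factor and will need to reconcile the constants accordingly.
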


Thanks to this proposition, we can derive a concentration inequality for $\wh\Gamma_m^\dagger\wh M_m^d$ around its population counterpart $\Gamma_m^\dagger M_m$. Before we get a hand on this, we recall the following rate-type condition which is fundamental in functional data analysis \citep{ferre2003functional, ferre2005smoothed,Hall2007mcflr,lei2014adaptive,lian2015functional,chen2023optimality}. 
\begin{assumption}[Rate-type condition]\label{assumption: rate-type condition}
There exist positive constants $\alpha_1$, $\alpha_2$, $\wt C$ and $\wt C'$ such that
\begin{align*}
\alpha_2>1/2,\quad\lambda_j\geqslant \wt Cj^{- \alpha_1}\quad\text{and}\quad |b_{ij}|\leqslant \wt C'j^{- \alpha_2}, \quad(\forall i\in[d],j\in\mb Z_{+})
\end{align*}
 where $b_{ij}:=\langle  \bs{\beta}_i,\phi_j\rangle$ for $\bs\beta_i$ defined in \eqref{def: central subspace}.
\end{assumption}
\begin{remark}
 The assumption about the eigenvalues $\lambda_{j}$ of $\Gamma$ ensures that the estimation of eigenfunctions of $\Gamma$ is accurate. The assumption about the coefficients $b_{ij}$ implies that they do not decrease too quickly concerning $j$ uniformly for all $i$. This assumption also implies that any basis $\{\wt\bbeta_i\}_{i=1}^d$ of $\mc S_{\Y|\vX}$ that satisfies $\wt\bbeta_i=\sum_{j=1}^\infty\wt{b}_{ij}\phi_j$ has coefficients $|\wt b_{ij}|\lesssim j^{-\alpha_2}$.
 \end{remark}
\begin{proposition}
\label{prop:concentration Gammam dag Mmd}
 Suppose that Assumptions $\ref{as:joint distribution assumption}$ to $\ref{assumption: rate-type condition}$ hold, then $\forall \gamma\in(0,1/2)$, we have
\begin{equation*}
\begin{aligned}
 \lno\widehat\Gamma_m^\dagger \widehat M_m^d-\Gamma_m^\dagger M_m\rno =O_{\mb{P}}\l \frac{m^{\alpha_1+1}}{n^{1/2-\gamma}} \r.
\end{aligned}
\end{equation*}
\end{proposition}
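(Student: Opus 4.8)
The plan is to split the error into a term governed by the estimation of the MDDO and a term governed by the estimation of the (inverse) covariance, and to show that the first dominates. Concretely, I would write
\begin{align*}
\widehat\Gamma_m^\dagger \widehat M_m^d - \Gamma_m^\dagger M_m = \widehat\Gamma_m^\dagger\big(\widehat M_m^d - M_m\big) + \big(\widehat\Gamma_m^\dagger - \Gamma_m^\dagger\big)M_m,
\end{align*}
so that by the triangle inequality and submultiplicativity it suffices to control $\|\widehat\Gamma_m^\dagger\|\,\|\widehat M_m^d - M_m\|$ and the second term (which I sharpen below rather than bounding $\|M_m\|$ crudely).

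For the first factor, Proposition \ref{prop:bound hatMmd Mm} already delivers the MDDO error: choosing the free parameter $C$ so that $C+\ln(D_1m^2n)\asymp n^{2\gamma/5}$ (which lies in the admissible interval for all large $n$, since $2\gamma/5<1/5$), the exponent $(\,\cdot\,)^{5/2}$ is of order $n^{\gamma}$ while the failure probability $\exp(-C)=D_1m^2n\,e^{-c\,n^{2\gamma/5}}\to0$; hence $\|\widehat M_m^d - M_m\| = O_{\mathbb P}(m\,n^{\gamma-1/2})$. The key auxiliary estimate is the stability of the truncated sample covariance. Since $\Pi_m$ is an orthogonal projection, $\|\bs X^{(m)}\|\le\|\bs X\|$, whence $\mathbb E\|\widehat\Gamma_m - \Gamma_m\|_{HS}^2 \le n^{-1}\mathbb E\|\bs X^{(m)}\|^4 \le n^{-1}\mathbb E\|\bs X\|^4$, and Chebyshev gives $\|\widehat\Gamma_m - \Gamma_m\| \le \|\widehat\Gamma_m - \Gamma_m\|_{HS} = O_{\mathbb P}(n^{-1/2})$ uniformly in $m$. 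Because $\widehat\Gamma_m$ and $\Gamma_m$ are both supported on $V_m:=\mathrm{span}\{\phi_1,\dots,\phi_m\}$, Weyl's inequality yields $\widehat\lambda_m \ge \lambda_m - \|\widehat\Gamma_m-\Gamma_m\|$, and the rate-type bound $\lambda_m\ge\widetilde C m^{-\alpha_1}$ of Assumption \ref{assumption: rate-type condition} gives $\widehat\lambda_m\ge\tfrac12\widetilde C m^{-\alpha_1}$ with probability tending to one, provided $m^{\alpha_1}=o(\sqrt n)$ (guaranteed by the prescribed order of $m$). Consequently $\|\widehat\Gamma_m^\dagger\|=\widehat\lambda_m^{-1}=O_{\mathbb P}(m^{\alpha_1})$, and the first term is of order $O_{\mathbb P}(m^{\alpha_1+1}n^{\gamma-1/2})$, matching the target.

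For the second term I would invoke the resolvent identity on $V_m$, namely $\widehat\Gamma_m^\dagger-\Gamma_m^\dagger = -\widehat\Gamma_m^\dagger(\widehat\Gamma_m-\Gamma_m)\Gamma_m^\dagger$ (valid since both operators are invertible on $V_m$ and vanish on $V_m^\perp$), giving
\begin{align*}
\big(\widehat\Gamma_m^\dagger-\Gamma_m^\dagger\big)M_m = -\,\widehat\Gamma_m^\dagger(\widehat\Gamma_m-\Gamma_m)\,\Gamma_m^\dagger M_m .
\end{align*}
The point is that $\Gamma_m^\dagger M_m$ has operator norm $O(1)$ uniformly in $m$: by Corollary \ref{corollary, MDDO and central subspace}, $\mathrm{Im}(M)=\mathrm{span}\{\Gamma\bbeta_1,\dots,\Gamma\bbeta_d\}$, so $M=\sum_{i,j}a_{ij}(\Gamma\bbeta_i)\otimes(\Gamma\bbeta_j)$ with fixed coefficients; using $\Pi_m\Gamma\bbeta_i=\Gamma_m\bbeta_i^{(m)}$ and $\Gamma_m^\dagger\Gamma_m\bbeta_i^{(m)}=\bbeta_i^{(m)}$ together with $\|\bbeta_i^{(m)}\|\le\|\bbeta_i\|$ (finite since $\alpha_2>1/2$ forces $\sum_j b_{ij}^2<\infty$), one obtains $\Gamma_m^\dagger M_m=\sum_{i,j}a_{ij}\,\bbeta_i^{(m)}\otimes(\Gamma_m\bbeta_j^{(m)})$, whose norm is bounded by a constant. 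Hence the second term is $O_{\mathbb P}(m^{\alpha_1})\cdot O_{\mathbb P}(n^{-1/2})\cdot O(1)=O_{\mathbb P}(m^{\alpha_1}n^{-1/2})$, smaller than the target by the factor $m^{-1}n^{-\gamma}$ and therefore absorbed.

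Adding the two contributions gives $\|\widehat\Gamma_m^\dagger\widehat M_m^d-\Gamma_m^\dagger M_m\|=O_{\mathbb P}(m^{\alpha_1+1}/n^{1/2-\gamma})$, as claimed. The main obstacle is the stability of the pseudo-inverse $\widehat\Gamma_m^\dagger$: because the smallest retained eigenvalue $\lambda_m$ decays like $m^{-\alpha_1}$, keeping $\widehat\lambda_m$ away from zero forces the covariance perturbation to be small relative to $m^{-\alpha_1}$, which is exactly what constrains the growth of the smoothing parameter $m$ (through $m^{\alpha_1}=o(\sqrt n)$) and propagates the extra factor $m^{\alpha_1}$ into the final rate. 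Everything else is either supplied by Proposition \ref{prop:bound hatMmd Mm} or follows from the algebraic structure in Corollary \ref{corollary, MDDO and central subspace} and Assumption \ref{assumption: rate-type condition}.
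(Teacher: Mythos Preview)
Your argument is correct and follows the same two-term decomposition as the paper, but the execution differs in two places. For the covariance perturbation the paper works in the $m$-dimensional coordinate representation and invokes sub-Gaussian matrix concentration (a Vershynin-type bound, their Lemma~\ref{lem:esgrm} and Lemma~\ref{lemma, estiamtion error of inverse sample cov}) to obtain $\|\widehat\Gamma_m^\dagger-\Gamma_m^\dagger\|=O_{\mathbb P}(m^{\alpha_1+1/2}n^{-1/2})$ directly; you instead use the Hilbert--Schmidt fourth-moment bound $\mathbb E\|\widehat\Gamma_m-\Gamma_m\|_{HS}^2\le n^{-1}\mathbb E\|\boldsymbol X\|^4$, which is more elementary and does not appeal to Assumption~\ref{assumption: sub-Gaussian} at this step. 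For the second term you go further than the paper: rather than controlling $\|\widehat\Gamma_m^\dagger-\Gamma_m^\dagger\|\cdot\|M_m\|$ with $\|M_m\|\le\|M\|+1$, you apply the resolvent identity and the uniform bound $\|\Gamma_m^\dagger M_m\|\le\|\Gamma^{-1}M\|$ (which the paper establishes separately as Lemma~\ref{lem:Gammam dagger Mm uniformly bounded} but does not exploit in this proof), getting $O_{\mathbb P}(m^{\alpha_1}n^{-1/2})$ instead of the paper's $O_{\mathbb P}(m^{\alpha_1+1/2}n^{-1/2})$ for that piece; both are dominated by the first term, so the final rate is unaffected.

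One small caveat: your parenthetical that $m^{\alpha_1}=o(\sqrt n)$ is ``guaranteed by the prescribed order of $m$'' overstates what is assumed, since the standing hypothesis $m=n^{c_1}$ with $c_1\in(0,1)$ alone does not force $c_1\alpha_1<1/2$. The constraint does hold for the specific $c_1=\frac{1-2\gamma}{2\alpha_1+2\alpha_2+1}$ used in Theorem~\ref{theorem, total convergence rate}, and the paper's sub-Gaussian route sidesteps the issue because its covariance-inverse bound only needs $m/n\to0$.
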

This proposition is used to give an error bound of the subspace estimation error in the subsequent Theorem $\ref{theorem, total convergence rate}$.
Now we state the convergence rate of our FSFIR estimator:
\begin{theorem}\label{theorem, total convergence rate}
Assume Assumptions $\ref{as:joint distribution assumption}$ to $\ref{assumption: rate-type condition}$ hold. Then for any $\gamma\in\l0,\tfrac{5}{4(\alpha_1+\alpha_2+3)}\r$, by choosing 
$m=n^{\frac{1-2\gamma}{2\alpha_1+2\alpha_2+1}}$ (i.e.,  $c_1=\frac{1-2\gamma}{2\alpha_1+2\alpha_2+1}$), we can get that
\begin{align*}
 \mb E\left[\left\|P_{\mc{S}_{\Y|\X}}-P_{\widehat{ \mc{S}}_{\Y|\X}^{(m)}}\right\|^2\right] \lesssim n^{-\frac{(2\alpha_2-1)(1-2\gamma)}{2\alpha_1+2\alpha_2+1}}. 
\end{align*}
\end{theorem}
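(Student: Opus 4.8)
The plan is to split the target error into a deterministic truncation (bias) part and a stochastic estimation (variance) part, bound each at the same order, and then observe that the prescribed $m$ is precisely their balance point. Writing $\mc S^{(m)}:=\mc S^{(m)}_{\Y|\X}$ for the truncated central subspace of \eqref{def: truncated central subspace}, the triangle inequality for the operator norm together with $(a+b)^2\le 2a^2+2b^2$ gives
\begin{equation*}
\mb E\left[\left\|P_{\mc S_{\Y|\X}}-P_{\wh{\mc S}^{(m)}_{\Y|\X}}\right\|^2\right]\le 2\left\|P_{\mc S_{\Y|\X}}-P_{\mc S^{(m)}}\right\|^2+2\,\mb E\left[\left\|P_{\mc S^{(m)}}-P_{\wh{\mc S}^{(m)}_{\Y|\X}}\right\|^2\right],
\end{equation*}
where the first term is the bias from truncating at level $m$ and the second is the sampling error of estimating $\mc S^{(m)}$.

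For the bias term I would argue directly from the rate-type condition (Assumption~\ref{assumption: rate-type condition}). Since $\mc S^{(m)}=\mathrm{span}\{\Pi_m\bbeta_1,\dots,\Pi_m\bbeta_d\}$, each direction is perturbed by $\bbeta_k-\Pi_m\bbeta_k=\sum_{j>m}b_{kj}\phi_j$, whose squared norm is $\sum_{j>m}b_{kj}^2\lesssim\sum_{j>m}j^{-2\alpha_2}\lesssim m^{1-2\alpha_2}$ because $\alpha_2>1/2$. As the Gram matrix of $\{\bbeta_k\}$ is a fixed positive-definite $d\times d$ matrix and that of $\{\Pi_m\bbeta_k\}$ converges to it, a standard subspace-perturbation estimate yields $\|P_{\mc S_{\Y|\X}}-P_{\mc S^{(m)}}\|\lesssim m^{1/2-\alpha_2}$, hence a bias contribution of order $m^{1-2\alpha_2}$.

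For the variance term I would use Corollary~\ref{corollary, MDDO and central subspace} and Lemma~\ref{lemma, way of estimate truncate central subspace} to identify $\mc S^{(m)}=\mathrm{Im}(\Gamma_m^\dagger M_m)$, while \eqref{def: estimator central subspace} gives $\wh{\mc S}^{(m)}_{\Y|\X}=\mathrm{Im}(\wh\Gamma_m^\dagger\wh M_m^d)$; thus both subspaces are the (left) singular subspaces of rank-$d$ operators. A Wedin/Davis--Kahan $\sin\Theta$ bound then gives, on the event that the numerator is below the gap,
\begin{equation*}
\left\|P_{\mc S^{(m)}}-P_{\wh{\mc S}^{(m)}_{\Y|\X}}\right\|\lesssim\frac{\left\|\wh\Gamma_m^\dagger\wh M_m^d-\Gamma_m^\dagger M_m\right\|}{\sigma_d(\Gamma_m^\dagger M_m)}.
\end{equation*}
The essential point is that the denominator stays bounded away from $0$: using $M_m=\Pi_m M\Pi_m$ and $\Gamma_m^\dagger\Pi_m=\Gamma_m^\dagger$ one checks that $\Gamma_m^\dagger M_m\to\Gamma^{-1}M$ in operator norm (a legitimate finite-rank limit, since $\mathrm{Im}(M)=\Gamma\mc S_{\Y|\X}$ makes $\Gamma^{-1}M$ a bounded rank-$d$ operator), so $\sigma_d(\Gamma_m^\dagger M_m)\to\sigma_d(\Gamma^{-1}M)>0$ by the coverage condition and $\sigma_d(\Gamma_m^\dagger M_m)\gtrsim 1$ for large $m$. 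The numerator is exactly what Proposition~\ref{prop:concentration Gammam dag Mmd} controls, namely $O_{\mb P}(m^{\alpha_1+1}/n^{1/2-\gamma})$. To convert this into a bound on the \emph{expected} squared error I would integrate the exponential tail furnished by Proposition~\ref{prop:bound hatMmd Mm} (together with its covariance analogue), splitting on a high-probability event and using $\|P_{\mc S^{(m)}}-P_{\wh{\mc S}^{(m)}_{\Y|\X}}\|\le 1$ to render the complement negligible; this yields $\mb E[\|P_{\mc S^{(m)}}-P_{\wh{\mc S}^{(m)}_{\Y|\X}}\|^2]\lesssim m^{2\alpha_1+2}/n^{1-2\gamma}$.

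Finally I would balance the two orders: with $m=n^{(1-2\gamma)/(2\alpha_1+2\alpha_2+1)}$ a direct computation shows that both $m^{1-2\alpha_2}$ and $m^{2\alpha_1+2}/n^{1-2\gamma}$ equal $n^{-(2\alpha_2-1)(1-2\gamma)/(2\alpha_1+2\alpha_2+1)}$, which is the asserted rate; the constraint $\gamma<\tfrac{5}{4(\alpha_1+\alpha_2+3)}$ is what guarantees that the value of $C$ needed for the tail integration falls inside the admissible window of Proposition~\ref{prop:bound hatMmd Mm}. The step I expect to be the main obstacle is precisely this passage from the in-probability rate of Proposition~\ref{prop:concentration Gammam dag Mmd} to a genuine bound on the \emph{expectation}: because $\wh\Gamma_m^\dagger$ can blow up when $\wh\Gamma_m$ has atypically small eigenvalues, one must exploit the exponential (not merely polynomial) concentration to absorb the rare event, and one must simultaneously verify the uniform-in-$m$ lower bound on $\sigma_d(\Gamma_m^\dagger M_m)$ so that the Davis--Kahan denominator does not secretly deteriorate with $m$.
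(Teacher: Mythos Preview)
Your proposal is correct and follows essentially the same route as the paper: the bias--variance split via the triangle inequality, the $m^{1/2-\alpha_2}$ truncation bound from Assumption~\ref{assumption: rate-type condition}, the $\sin\Theta$ argument with the uniform lower bound on $\sigma_d(\Gamma_m^\dagger M_m)$ (via $\Gamma_m^\dagger M_m=\Pi_m\Gamma^{-1}M\Pi_m\to\Gamma^{-1}M$), the concentration of Proposition~\ref{prop:concentration Gammam dag Mmd} for the numerator, and the conversion to expectation by splitting on the high-probability event and using $\|P-P'\|\le 1$ on the complement. The only cosmetic difference is that the paper realizes the non-symmetric $\sin\Theta$ bound by the symmetrization $P_B=P_{BB^*}$ followed by a self-adjoint version, so their denominator is $\min\{\sigma_{\min}^+(\Gamma_m^\dagger M_m)^2,\sigma_{\min}^+(\wh\Gamma_m^\dagger\wh M_m^d)^2\}$ and they explicitly verify the lower bound for the \emph{empirical} operator as well---a check that your ``numerator below the gap'' event handles implicitly.
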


This specific convergence rate guarantees the effectiveness of FSFIR. To prove this convergence rate, we decompose the error into two parts: $\mathbf{Loss}_1$  caused by truncation which is easy to bound and $\mathbf{Loss}_2$ caused by estimating $\wh{\mc S}_{Y|\bs X}^{(m)}$ with finite samples. Our main job is to bound the latter one. To this end, we apply the generalized  Sin Theta theorem in \cite[Proposition 2.3]{seelmann2014notes} to non-symmetric operator. Then, $\mathbf{Loss}_2$ is bounded by 
combining this non-symmetric Sin Theta theorem with Proposition $\ref{prop:concentration Gammam dag Mmd}$.

\section{Numerical Performance of FSFIR}\label{section, experiments}

In this section, we study the numerical performance of FSFIR from several aspects. The first  experiments focuses on the empirical subspace estimation error performance of FSFIR for estimating the central subspace in  synthetic data.
The  experiment includes the comparison with some well-known FSIR methods  including the truncated FSIR \citep{ferre2003functional,chen2023optimality} and regularized FSIR \citep{lian2015functional}. The results reveal the  advantage and  convenience of FSFIR to practice.
Then we apply FSFIR algorithm to a real data: the bike sharing data to  demonstrate the efficiency of our algorithm.
\subsection{Synthetic experiments}\label{sec:Synthetic}
We first introduce the synthetic models we considered in this subsection. All synthetic models are of a functional-valued predictor.
Throughout this section, we set  $\varepsilon\sim N(0,0.25)$, i.e., a noise level of $0.25$. The  experimental results  corresponding to a higher noise level such as $1$ are presented in the Supplementary Materials.
\begin{example}\citep[Example 1]{lei2014adaptive,lee2020testing} 
First, let $\bar\beta_1=0.3$, $\bar\beta_j=4(- 1)^jj^{- 2}(j\geqslant2)$ and $\beta_j=\bar\beta_j/\|\bar\beta\|$ where $\bar\beta=\{\bar\beta_n\}_{n=1}^\infty$. Then we define $\bs{\beta}(t)=\sum\limits_{j=1}^{100}\beta_j\phi_j(t)$ where $\phi_1(t)=1$ together with $\phi_j(t)=\sqrt2 \cos[(j-1)\pi t](j\geqslant2)$ form an orthonormal basis. Then we define model $\mc M_1$ as follows:
\begin{align*}
\mc M_1:
&\quad~Y=\langle \bs X(t),\bs{\beta}(t)\rangle+\varepsilon;\\
&\bs X(t)=\sum_{j=1}^{100} j^{- 0.55}X_j\phi_j(t),t\in[0,1],
\end{align*}
where $X_j\overset{\mathrm{iid}}{\sim}N(0,1)$.
\end{example}

\begin{example}\citep[example M1]{lian2015functional}
Consider $\bs{\beta}_1(t)=\sqrt2\sin\left(\frac32\pi t\right)$ and $\bs{\beta}_2(t)=\sqrt2\sin\left(\frac52\pi t\right)$. Define model $\mc M_2$ as follows:
\[
\mc M_2: Y=\langle\bs{\beta}_1,\bs X\rangle+100\langle\bs{\beta}_2,\bs X\rangle^3+\varepsilon,
\]
where $\bs X$ is a standard Brown motion and we approximate it by the top 100 eigenfunctions of the Karhunen–Loève decomposition in practical implementation.
\end{example}

\begin{example}
    $Y=\exp(\langle \bs \beta,\boldsymbol{X}\rangle )+\epsilon$,
    where $\boldsymbol{X}$ is the standard Brownian motion on $[0,1]$, and $\bs\beta=\sqrt2\sin(\frac{3\pi t}{2})$.
\end{example}


In the following, we compare our FSFIR method with several slice-based methods using models $\mc M_1$ to $\mc M_3$.
Consider two slice-based methods --- one is truncated FSIR \citep{ferre2003functional,chen2023optimality}, which studies a truncation on the covariance operator, and the other is regularized FSIR \citep{lian2015functional}, which estimates $\S$ by applying a regularization tune parameter $\rho$ on $\Gamma$. In the following, we abbreviate these two methods as TFSIR and RFSIR respectively. 

In this experiment, we set the sample size $n=20000$. For slice-based methods, we set the slice number $H=10$, a popularly adopted slice number. To evaluate the performance of these methods, we choose the  subspace estimation error: $\mc D(\bs B;\bs{\wh{B}}):=\left\|P_{\bs B}-P_{\bs{\wh B}}\right\| $ where $\bs B:=(\bs\beta_1,...,\bs\beta_d):\R^d\to \mc H$, $\bs{\wh B}:=(\wh{\bs\beta}_1,...,\wh{\bs\beta}_d):\R^d\to \mc H$.
This metric takes value in $[0,1]$ and the smaller it is, the better the performance.

For each model, we choose  several $m$'s for FSFIR and TFSIR, among which one tends to have the best performance. 
Specifically, 
$m$ ranges in $\{2,3,\dots,13,14,20,30,40\}$. Following the same fashion,  $\rho$ ranges in $0.01\times \{1,2,\cdots,9,10,15,20,25,30,40,\cdots,140,150\}$. Each trial is repeated 100 times for reliability.
We show the average $\mc D(\bs B;\bs{\wh B})$ with different $m$ or $\rho$ for three methods under $\mc M_1$ to $\mc M_3$ in Figure \ref{fig:error 3models},
where we mark minimal error in each model with red `$\times$'. The shaded areas represent the standard error associated with these estimates and all of them are less than  $0.009$. For FSFIR, the  minimal errors for $\mc M_1-\mc M_3$ are  $0.06,0.01,0.01$ respectively.
For TFSIR, the  minimal errors are  $0.06,0.02,0.01$ and for regularized FSIR,  the  minimal errors are $0.09,0.04,0.01$.  

\begin{figure}[H]
	\centering
	\begin{minipage}{0.33\textwidth}
		\includegraphics[width=\textwidth]{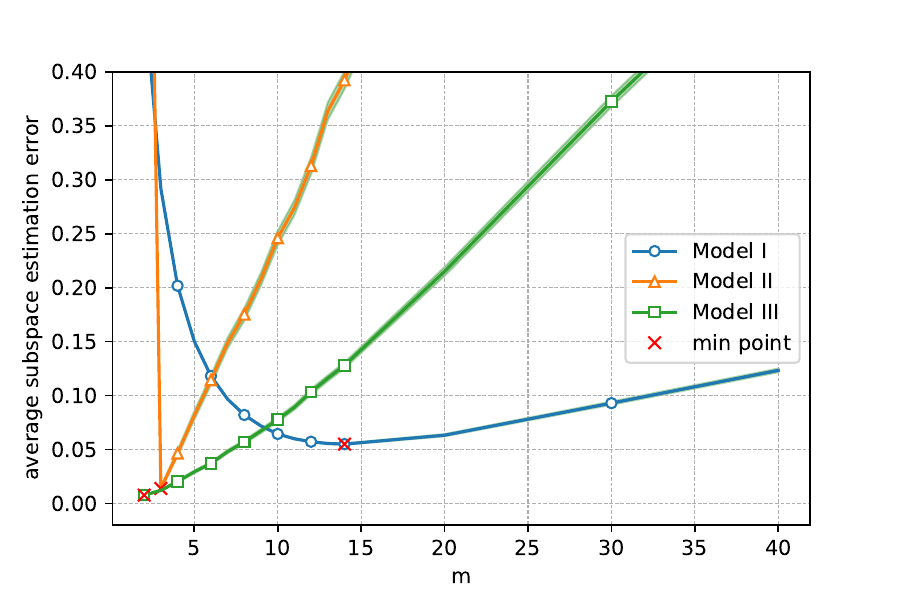}
	\end{minipage}\hfill
	\begin{minipage}{0.33\textwidth}
		\includegraphics[width=\textwidth]{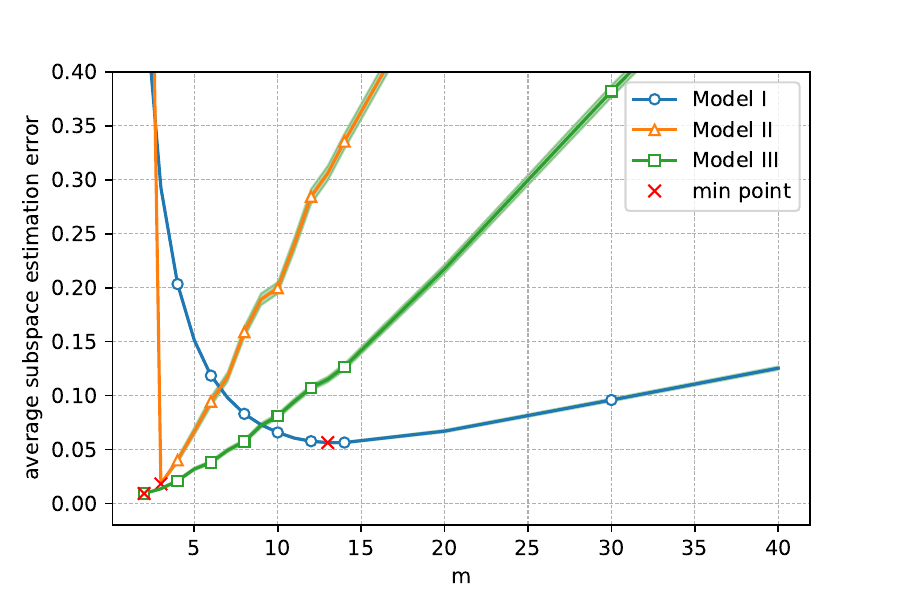}
	\end{minipage}
	\begin{minipage}{0.33\textwidth}
		\includegraphics[width=\textwidth]{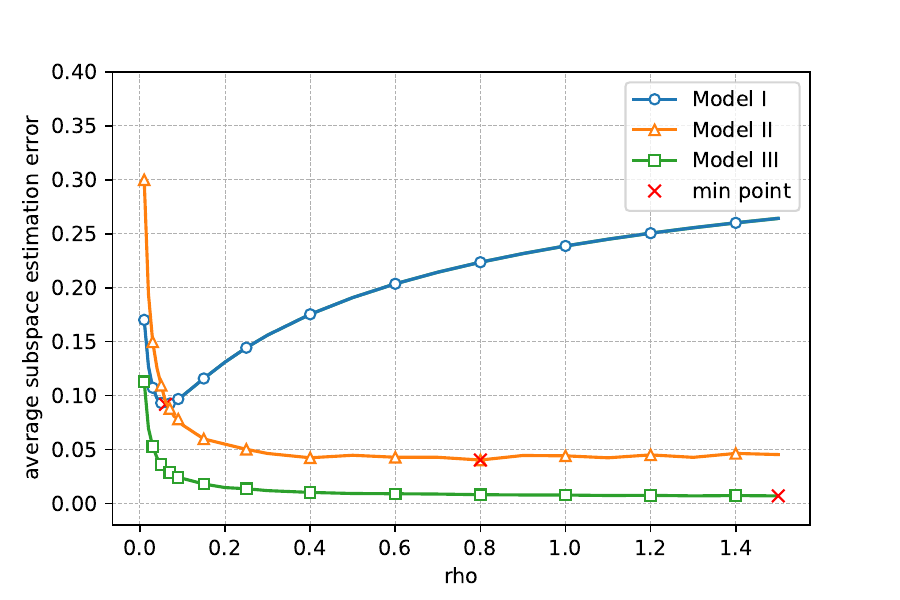}
	\end{minipage}
\caption{Average subspace estimation error of FSFIR (left), TFSIR (middle) and RFSIR (right) for various models. The standard errors are all below $0.009$. }
\label{fig:error 3models}
\end{figure}

Figure \ref{fig:error 3models} shows that FSFIR attains the best performance among  all models. 
Moreover, FSFIR is easier to practice as it does not need a slice number $H$ in advance. 
\subsection{Application to real data}
In this section, we analyze the bike sharing data \citep{fanaee2014event,lee2022functional}, which includes hourly bike rental counts and weather information such as temperature, precipitation, wind speed, and humidity. The data was collected every day from January 1, 2011 to December 31, 2012 from the Capital Bike Share system in Washington, DC and can be found at \url{https://archive.ics.uci.edu/ml/datasets/Bike+Sharing+Dataset}.

The main goal of this section is to investigate how temperature affects bike rentals on Saturdays. After removing data from three Saturdays with significant missing information, we plot hourly bike rental counts and hourly normalized temperature (values divided by 41, the maximum value) for 102 Saturdays in Figure \ref{figure, comparsion of MDDO and FSIR, real data}. In the following analysis, we use hourly normalized temperature and the logarithm of the daily average  bike rental counts as the predictor function and scalar response, respectively.
\begin{figure}[H]
	\centering
	\begin{minipage}{0.50\textwidth}
		\includegraphics[width=\textwidth]{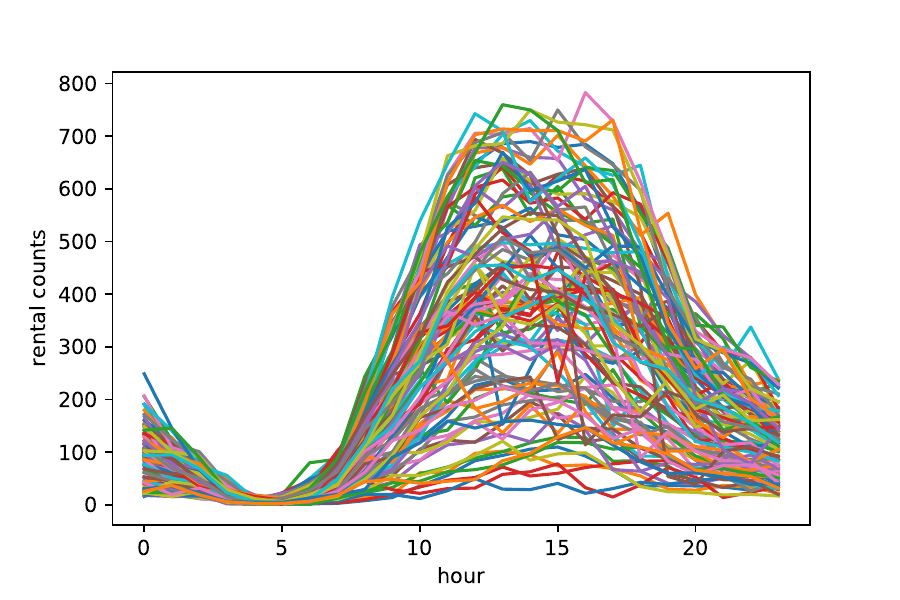}
	\end{minipage}\hfill
	\begin{minipage}{0.50\textwidth}
		\includegraphics[width=\textwidth]{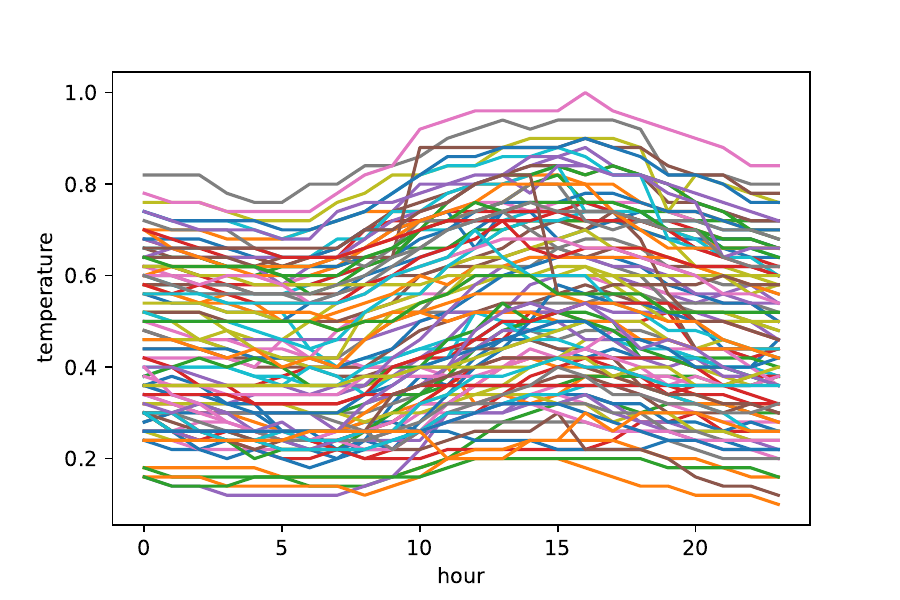}
	\end{minipage}
 \caption{Bike sharing data}
\label{figure, comparsion of MDDO and FSIR, real data}
\end{figure}

To evaluate the estimation error performance of  FSFIR for estimating the central space, we incorporate dimension reduction with FSFIR as an intermediate step in modeling the relationship between the predictor and response variables. Specifically, we apply FSFIR  to perform dimension reduction on a given training dataset $\{(\vX_i,Y_i)\}_{i=1}^n$. This yields a set of low-dimensional predictors $\bs x_i$ for each $i\in [n]$. Subsequently, we utilize Gaussian process regression to fit a nonparametric regression model using the samples $\{(\bs x_i,Y_i)\}_{i=1}^n$.
We randomly selected $90$ samples as the training data and used the remaining data to calculate the out-of-sample mean square error (MSE). We repeated this process $100$ times and calculated the mean and standard error. The results are presented in Table \ref{tab:real data error},  which suggests that FSFIR performs well in practical applications. 
It is noteworthy that the best
result of  FSFIR  is observed when $d=1$. 
This means  FSFIR  provides an  accurate and simpler (lower dimensional) model for the relationship between the response variable and the predictor. 
\begin{table}[H]
\renewcommand\arraystretch{0.3}
\begin{tabular}{|c|c|c|c|c|c|c|c|c|}
\hline
 & $m$   & $1$ & $3$ & $5$ & $7$ & $9$  & $11$ & $13$  \\
\hline
\hline
\multirow{5}{*}{FSFIR} 
& $d=1$  & \emph{\textbf{0.230}} & \textbf{0.259} & \textbf{0.265} & \textbf{0.247} & \textbf{ 0.280} & \textbf{ 0.319}  & \textbf{0.320} \\ 
& & {\footnotesize(0.0097)} & {\footnotesize(0.0126)} & {\footnotesize(0.0127)} & {\footnotesize(0.0122)} & {\footnotesize(0.0128)} & {\footnotesize(0.0143)} & {\footnotesize(0.0160)} 
\\
& $d=2$  &  & {\textbf{0.356}} & \textbf{0.276} & \textbf{0.372} & \textbf{0.334} & \textbf{ 0.396} & \textbf{0.329} \\ 
& & & {\footnotesize(0.0409)} & {\footnotesize(0.0170)} & {\footnotesize(0.0460)} & {\footnotesize(0.0312)} & {\footnotesize(0.0226)}  & {\footnotesize(0.0170)} 
\\ 
& $d=3$  &  & \textbf{0.358} & \textbf{0.461} & \textbf{0.370} & \textbf{0.420} & \textbf{ 0.625}& \textbf{0.396} \\ 
& &  & {\footnotesize(0.0303)} & {\footnotesize(0.0507)} & {\footnotesize(0.0304)} & {\footnotesize(0.0390)} & {\footnotesize(0.0795)}  & {\footnotesize(0.0365)} 
\\
& $d=4$  &  &  & \textbf{0.699} & \textbf{0.473} & \textbf{0.726} & \textbf{ 0.831}& \textbf{0.460} \\ 
& &  &  & {\footnotesize(0.0544)} & {\footnotesize(0.0584)} & {\footnotesize(0.0854)} & {\footnotesize(0.1008)}   & {\footnotesize(0.0374)} 
\\
& $d=5$  & &  & \textbf{1.052} & \textbf{0.876} & \textbf{1.131} & \textbf{0.883} & \textbf{0.936}\\ 
& &  &  & {\footnotesize(0.0710)} & {\footnotesize(0.0682)} & {\footnotesize(0.0930)} & {\footnotesize(0.0942)}    & {\footnotesize(0.0875)} 
\\[1pt]
\hline
\end{tabular}
\caption{The empirical mean (standard error) of MSE.}\label{tab:real data error}
\end{table}

\section{Concluding Remarks}\label{section, Discussion}
In summary, we introduce two novel objects, the statistics MDDO and the method FSFIR.
MDDO serves to measure the conditional mean independence of a functional-valued predictor on a multivariate response. And based on MDDO, FSFIR aims to estimating the central subspace $\mc S_{\Y|\vX}$.

Besides MDDO, there are other ways to examine the conditional mean
independence in functional-data cases, such as the \textit{functional martingale
difference divergence} (FMDD, \citealt{lee2020testing}). However, we would like to point out an extra feature of our MDDO --- Under certain circumstances, a low rank projection of $\bs X$ is conditional mean independent of $Y$ even if $\X$ is not. In other words, for some $e\in\mc H$, $\E[\langle e,\X\rangle|Y]=\E[\langle e,\X\rangle]$ but $\E[\X|Y]$ is not equal to $\E[\X]$. 
In this case, we can use MDDO to
separate out a part that is conditional mean independent of $Y$ in view of Theorem $\ref{theorem, MDDO and conditional mean independence}$ (ii). This property of MDDO makes it a tool for slicing-free estimation (see the proof of Theorem $\ref{theorem, MDDO and IRS}$).

However, there are still several open problems that remain to be solved. For example, a study of FSFIR   from a decision theoretic point of view  is interesting. Additionally, it would be interesting to extend these methods to cases with high-dimensional  or functional-valued response. We plan to explore these topics in future research.

\section*{Supplementary Materials}
Supplement to ``Functional Slicing-free Inverse Regression via Martingale Difference Divergence Operator''. The supplementary material includes
the proofs for all the theoretical results in the paper.

\bibliography{reference}
\bibliographystyle{chicago}

\bigskip
\vskip .65cm

\noindent
Songtao Tian, Department of Mathematical Sciences, Tsinghua University
\vskip 2pt
\noindent
E-mail: tst20@mails.tsinghua.edu.cn

\noindent
Zixiong Yu, Department of Mathematical Sciences, Tsinghua University
\vskip 2pt
\noindent
E-mail: yuzx19@mails.tsinghua.edu.cn

\noindent
Rui Chen, Center for Statistical Science, Department of Industrial Engineering, Tsinghua University
\vskip 2pt
\noindent
E-mail: chenrui\_fzu@163.com
\vskip 2pt

\newpage
\appendix

\section{Proof of Lemma \ref{lemma, equivalence of two def of MDDO}}
\begin{proof}
For any ${\bs{\beta}}\in\mc H$, according to the definition of $G_{\bs s}$ (see Definition $\ref{def: MDDO}$), one has
\begin{align*}
\langle G_{\bs s},{\bs{\beta}}\rangle&=\int_{[0,1]} G_{\bs s}(t){\bs{\beta}}(t)~\mathrm{d}t=\int_{[0,1]}\mathrm{cov}\hspace{-0.9mm}\left(\bs{X}(t),\mathrm{e}^{\mi\langle \bs s,\Y\rangle}\right){\bs{\beta}}(t)~\mathrm{d}t\\
&=\int_{[0,1]}\mathrm{cov}\hspace{-0.9mm}\left(\bs{X}(t){\bs{\beta}}(t),\mathrm{e}^{\mi \langle \bs s,\Y\rangle}\right)~\mathrm{d}t.
\end{align*}
By Fubini theorem, under Assumption $\ref{as:joint distribution assumption}$, one can exchange the order of integration and covariance above and get that
\begin{align*}
 \langle G_{\bs s},{\bs{\beta}}\rangle&=\int_{[0,1]}\mathrm{cov}\hspace{-0.9mm}\left(\bs{X}(t){\bs{\beta}}(t),\mathrm{e}^{\mi \langle \bs s,\Y\rangle}\right)~\mathrm{d}t\\ &=\mathrm{cov}\hspace{-0.9mm}\left(\int_{[0,1]}\bs{X}(t){\bs{\beta}}(t)~\mathrm{d}t,\mathrm{e}^{\mi \langle \bs s,\Y\rangle}\right)=\mathrm{cov}\hspace{-0.9mm}\left(\langle \bs{X},{\bs{\beta}}\rangle,\mathrm{e}^{\mi \langle \bs s ,\Y\rangle}\right).
\end{align*}
Thus for any $\bs\alpha(t),{\bs{\beta}}(t)\in\mc H$, one can get
\begin{align*}
\big\langle \big(G_{\bs s}\otimes \overline{G}_{\bs s}\big)\bs\alpha,{\bs{\beta}}\big\rangle=\langle G_{\bs s},\bs\alpha\rangle\langle \overline{G}_{\bs s},{\bs{\beta}}\rangle=\mathrm{cov}\hspace{-0.9mm}\left(\langle \bs{X},\bs\alpha\rangle,\mathrm{e}^{\mi \langle \bs s,\Y\rangle}\right)\hspace{-0.9mm}\mathrm{cov}\hspace{-0.9mm}\left(\langle \bs{X},{\bs{\beta}}\rangle,\mathrm{e}^{-\mi\langle \bs s,\Y\rangle}\right)\\
=\mb{E}\hspace{-0.9mm}\left(\langle \bs{X},\bs\alpha\rangle\mathrm{e}^{\mi \langle \bs s,\Y\rangle}\right)\mb{E}\hspace{-0.8mm}\left(\langle \bs{X},{\bs{\beta}}\rangle\mathrm{e}^{-\mi \langle \bs s,\Y\rangle}\right)=\mb{E}\Big(\langle \bs{X},\bs\alpha\rangle\langle \bs{X}',{\bs{\beta}}\rangle\mathrm{e}^{\mi \langle \bs s,\Y-\Y'\rangle}\Big).
\end{align*}
Considering that $\mb{E}\big(\langle \bs{X},\alpha\rangle\langle \bs{X}',{\bs{\beta}}\rangle\big)=0$, one has
\begin{align*}
\big\langle \big(G_{\bs s}\otimes \overline{G}_{\bs s}\big)\bs\alpha,{\bs{\beta}}\big\rangle
=- \mb{E}\Big(\langle \bs{X},\bs\alpha\rangle\langle \bs{X}',{\bs{\beta}}\rangle\big(1-\mr{e}^{\mi \langle \bs s,\Y-\Y'\rangle}\big)\Big)&\\
=- \mb{E}\Big(\langle \bs{X},\bs\alpha\rangle\langle \bs{X}',{\bs{\beta}}\rangle\big[1-\cos\big(\langle \bs s,\Y-\Y'\rangle\big)\big]\Big)&\\
+\mi\mb{E}\Big(\langle \bs{X},\bs\alpha\rangle\langle \bs{X}',{\bs{\beta}}\rangle\big[\sin\big(\langle\bs s,\Y-\Y'\rangle\big)\big]\Big)&.
\end{align*}
It is easy to check that
\[\int_{\mb R^q}\frac{\sin \big(\langle\bs s,\Y-\Y'\rangle)\big)}{\|\bs s\|^{1+q}}~\mr{d}\bs s=\lim_{\varepsilon\to0^+}\int_{\bs s\in\mb{R}^q:\varepsilon\leqslant\|\bs s\|\leqslant \varepsilon^{-1}}\frac{\sin \big(\langle \bs s,\Y-\Y'\rangle\big)}{\|\bs s\|^{1+q}}~\mr{d}\bs s=0,\]
because the integrand is an odd function. By Lemma 1 in \cite{szekely2007measuring},  one can also get
\[\int_{\R^q}\frac{1-\cos\big(\langle \bs s,\Y-\Y'\rangle\big)}{\|\bs s\|^{1+q}}~\mr{d}\bs s=c_q\|\Y-\Y'\|.
\]
Combining above results with Definition $\ref{def: MDDO}$, one can obtain that 
\begin{align}\label{proof: lemma MDDO}
\langle\mathrm{MDDO}(\bs{X}|Y)\bs\alpha,{\bs{\beta}}\rangle=- \mb{E}\Big(\langle \bs{X},\bs\alpha\rangle\langle \bs{X}',{\bs{\beta}}\rangle\|\Y-\Y'\|\Big) .
\end{align}
Then by the arbitrariness of $\bs\alpha,{\bs{\beta}}\in\mc H$, the proof is completed. 
\end{proof}

\section{Proof of Theorem \ref{theorem, MDDO and conditional mean independence}}

According to \eqref{proof: lemma MDDO}, one can get the following useful lemma.
\begin{lemma}\label{lemma, MDDO and FMDD}
Under Assumption $\ref{as:joint distribution assumption}$, for all ${\bs{\beta}}\in\mathcal H$, $\|{\bs{\beta}}\|=1$, we have
\begin{align*}
\langle \mathrm{MDDO}(\boldsymbol{X}|\Y)({\bs{\beta}}),{\bs{\beta}}\rangle &=- \mathbb E\Big[ \langle \boldsymbol{X},{\bs{\beta}}\rangle \langle \boldsymbol{X}',{\bs{\beta}}\rangle \|\Y-\Y'\|\Big]\\
&=- \mathbb E\Big[\big\langle\langle \boldsymbol{X},{\bs{\beta}}\rangle{\bs{\beta}},\langle \boldsymbol{X}',{\bs{\beta}}\rangle{\bs{\beta}}\big\rangle\|\Y-\Y'\|\Big].
\end{align*}
\end{lemma}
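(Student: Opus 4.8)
The plan is to read this lemma off directly from the identity \eqref{proof: lemma MDDO} that was established in the course of proving Lemma \ref{lemma, equivalence of two def of MDDO}. That identity states that for arbitrary $\bs\alpha,{\bs{\beta}}\in\mc H$ one has $\langle\mathrm{MDDO}(\bs{X}|\Y)\bs\alpha,{\bs{\beta}}\rangle=-\mb{E}\big[\langle \bs{X},\bs\alpha\rangle\langle \bs{X}',{\bs{\beta}}\rangle\|\Y-\Y'\|\big]$. The first displayed equality of the lemma is then nothing more than the specialization $\bs\alpha={\bs{\beta}}$, so I would simply substitute and record the resulting expression $-\mb{E}\big[\langle \bs{X},{\bs{\beta}}\rangle\langle \bs{X}',{\bs{\beta}}\rangle\|\Y-\Y'\|\big]$.

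For the second equality I would exploit the normalization $\|{\bs{\beta}}\|=1$, which gives $\langle {\bs{\beta}},{\bs{\beta}}\rangle=1$. Since $\langle \bs{X},{\bs{\beta}}\rangle$ and $\langle \bs{X}',{\bs{\beta}}\rangle$ are real scalars, bilinearity of the inner product yields
\[
\langle \bs{X},{\bs{\beta}}\rangle\langle \bs{X}',{\bs{\beta}}\rangle=\langle \bs{X},{\bs{\beta}}\rangle\langle \bs{X}',{\bs{\beta}}\rangle\,\langle {\bs{\beta}},{\bs{\beta}}\rangle=\big\langle\langle \bs{X},{\bs{\beta}}\rangle{\bs{\beta}},\,\langle \bs{X}',{\bs{\beta}}\rangle{\bs{\beta}}\big\rangle.
\]
Inserting this rewriting inside the expectation turns the first expression into the second, completing the argument.

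I do not expect any genuine obstacle here: the statement is an immediate corollary of \eqref{proof: lemma MDDO}, and the only step needing even a passing justification is the harmless pulling of the scalar factors into the inner product, which is legitimate because the inner product on $\mc H$ is bilinear over $\R$ and the coefficients $\langle \bs{X},{\bs{\beta}}\rangle,\langle \bs{X}',{\bs{\beta}}\rangle$ are real-valued. Assumption \ref{as:joint distribution assumption} is needed only insofar as it already underpins \eqref{proof: lemma MDDO} (it guarantees the relevant integrability and the Fubini interchange), so nothing further is required for the present reduction.
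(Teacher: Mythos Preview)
Your proposal is correct and matches the paper's own approach exactly: the paper states that the lemma follows directly from \eqref{proof: lemma MDDO}, and you have simply spelled out the substitution $\bs\alpha={\bs{\beta}}$ together with the elementary rewriting using $\|{\bs{\beta}}\|=1$.
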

This conclusion links MDDO with functional martingale
difference divergence  (FMDD, \citealt{lee2020testing}). 
Next we give the following two lemmas to finish the proof of Theorem $\ref{theorem, MDDO and conditional mean independence}$.
\begin{lemma}\label{lem: Txx=0tuiTx=0}If $T$ is a positive semi-definite operator on a Hilbert space $\wt{\mathcal{H}}$, then for all $x\in\wt{\mathcal{H}}$, one has $\langle Tx,x\rangle=0\Longleftrightarrow Tx=0$.
\end{lemma}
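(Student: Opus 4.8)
The plan is to treat the two implications separately: the reverse implication is immediate, and the forward one reduces to a single application of the Cauchy--Schwarz inequality for positive semi-definite forms. First I would dispose of the direction $Tx=0\Rightarrow\langle Tx,x\rangle=0$, which holds trivially since $\langle 0,x\rangle=0$. The content lies entirely in the forward direction.

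For the forward direction, I would exploit that, since $T$ is positive semi-definite (hence self-adjoint), the map $B(u,v):=\langle Tu,v\rangle$ is a symmetric positive semi-definite bilinear form on $\wt{\mathcal{H}}$, and therefore obeys the Cauchy--Schwarz inequality $|B(u,v)|^2\leqslant B(u,u)\,B(v,v)$ even in the degenerate case. Instantiating this with $u=x$ and $v=Tx$, the hypothesis $B(x,x)=\langle Tx,x\rangle=0$ makes the right-hand side vanish, forcing $B(x,Tx)=0$. Since $B(x,Tx)=\langle Tx,Tx\rangle=\|Tx\|^2$, this yields $Tx=0$, completing the equivalence.

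The only point that genuinely needs care is the validity of the Cauchy--Schwarz step, which requires $B$ to be a bona fide positive semi-definite symmetric form, i.e., $T=T^*$; this is built into the meaning of ``positive semi-definite'' used throughout the paper (and holds for $\mathrm{MDDO}$, the intended application). An equivalent and slightly cleaner route, should one prefer to avoid invoking Cauchy--Schwarz, is to pass to the self-adjoint square root: since $T$ is positive semi-definite, the spectral theorem furnishes $S=T^{1/2}$ with $T=S^2$, so that $\langle Tx,x\rangle=\langle Sx,Sx\rangle=\|Sx\|^2$. Then $\langle Tx,x\rangle=0$ is equivalent to $Sx=0$, which in turn gives $Tx=S(Sx)=0$. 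Either way there is no real obstacle here: the lemma is a standard consequence of positivity, and the proof is essentially a one-liner once the appropriate elementary tool is fixed.
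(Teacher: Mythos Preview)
Your proof is correct and follows essentially the same approach as the paper: both use the Cauchy--Schwarz inequality for the positive semi-definite form $B(u,v)=\langle Tu,v\rangle$. The only cosmetic difference is that the paper applies Cauchy--Schwarz with an arbitrary second argument $y$ to deduce $\langle Tx,y\rangle=0$ for all $y$, whereas you take the specific choice $v=Tx$ to get $\|Tx\|^2=0$ directly; your square-root alternative is also valid but not used in the paper.
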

\begin{proof}
`$\Longleftarrow$': It is obvious.

`$\Longrightarrow$': It is easy to check that $f(a,b)=\langle Ta,b\rangle$ $(a,b\in\wt{\mc H})$ is a 
positive semi-definite Hermitian form. Thus, for any $y\in\wt{\mathcal{H}}$, one can use Cauchy inequality to get
\[|\langle Tx,y\rangle|^2\leqslant\langle Tx,x\rangle\langle Ty,y\rangle=0\Longrightarrow \langle Tx,y\rangle=0.\]
By the arbitrariness of $y\in\wt{\mc H}$, one has $Tx=0$.
\end{proof}

Our proof of Theorem $\ref{theorem, MDDO and conditional mean independence}$ is mainly inspired by the following property of
FMDD in \cite{lee2020testing}.
\begin{lemma}[Proposition 1 of \cite{lee2020testing}]\label{lem:prop1inlee}
If $\E[\|\X\|+\|\Y\|]<\infty$ and $\E[\|\bs X\|\|\Y\|]<\infty$, then we have
\[\E[\langle \X,\X'\rangle\|\Y-\Y'\|]=0\Longleftrightarrow \E[\X|\Y]=0\quad\text{almost surely},\]
where $(\X',\Y')$ is an i.i.d. copy of $(\X,\Y)$.
\end{lemma}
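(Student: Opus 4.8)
The plan is to convert the statement into the computation of a single nonnegative integral and then reduce the equivalence to a uniqueness property of Fourier transforms. The starting point is the Sz\'ekely integral representation of the Euclidean norm already invoked in the excerpt, namely $\norm{\Y-\Y'} = \frac{1}{c_q}\int_{\R^q}\frac{1-\cos\ang{\bs s,\Y-\Y'}}{\norm{\bs s}^{1+q}}\,\mathrm{d}\bs s$ (Lemma 1 of \cite{szekely2007measuring}). Substituting this into $\E[\ang{\X,\X'}\norm{\Y-\Y'}]$ and interchanging the expectation with the $\bs s$-integral, I would show that the quantity collapses to
\[\E\mbk{\ang{\X,\X'}\norm{\Y-\Y'}} = -\frac1{c_q}\int_{\R^q}\frac{\norm{G_{\bs s}}^2}{\norm{\bs s}^{1+q}}\,\mathrm{d}\bs s \leqslant 0,\]
where $G_{\bs s}$ is exactly the function from Definition \ref{def: MDDO}. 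Because the integrand is nonnegative, the left side vanishes if and only if $G_{\bs s}=0$ in $\H$ for Lebesgue-almost every $\bs s$, so the lemma reduces to proving that $G_{\bs s}=0$ a.e. $\Longleftrightarrow \E[\X\mid\Y]=0$ a.s.

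Two technical points support the displayed identity. First, the interchange of $\E$ and $\int_{\R^q}$ is justified by Tonelli's theorem applied to the modulus: using $\abs{\ang{\X,\X'}}\leqslant\norm{\X}\norm{\X'}$, the Sz\'ekely representation again, and independence of the two copies, the integral of the modulus is bounded by $c_q\,\E[\norm{\X}\norm{\X'}\norm{\Y-\Y'}]$, which is finite precisely because $\E[\norm{\X}+\norm{\Y}]<\infty$ and $\E[\norm{\X}\norm{\Y}]<\infty$ (split $\norm{\Y-\Y'}\leqslant\norm{\Y}+\norm{\Y'}$ and factor via independence). Second, the integrand is evaluated as in the proof of Lemma \ref{lemma, equivalence of two def of MDDO}: recalling the standing assumption $\E\X=0$ one has $\E[\ang{\X,\X'}]=0$, and expanding $\ang{\X,\X'}=\sum_j\ang{\X,\phi_j}\ang{\X',\phi_j}$ together with $e^{\mi\ang{\bs s,\Y-\Y'}}=e^{\mi\ang{\bs s,\Y}}e^{-\mi\ang{\bs s,\Y'}}$ and independence gives $\E[\ang{\X,\X'}e^{\mi\ang{\bs s,\Y-\Y'}}]=\sum_j\abs{\ang{G_{\bs s},\phi_j}}^2=\norm{G_{\bs s}}^2$. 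Taking real parts then yields $\E[\ang{\X,\X'}(1-\cos\ang{\bs s,\Y-\Y'})]=-\norm{G_{\bs s}}^2$.

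It remains to prove the equivalence $G_{\bs s}=0$ for a.e. $\bs s\Longleftrightarrow\E[\X\mid\Y]=0$ a.s. The direction ``$\Longleftarrow$'' is immediate, since $\ang{G_{\bs s},\bs\beta}=\E[\ang{\X,\bs\beta}e^{\mi\ang{\bs s,\Y}}]=\E[\E[\ang{\X,\bs\beta}\mid\Y]e^{\mi\ang{\bs s,\Y}}]=0$ for every $\bs\beta\in\H$ and every $\bs s$. For ``$\Longrightarrow$'' I would fix a countable orthonormal basis $\{\phi_j\}$ and note that $G_{\bs s}=0$ a.e. forces $\ang{G_{\bs s},\phi_j}=\E[\ang{\X,\phi_j}e^{\mi\ang{\bs s,\Y}}]=0$ for a.e. $\bs s$ and every $j$; this says the Fourier transform of the finite signed measure $A\mapsto\E[\ang{\X,\phi_j}\mathbf 1_{\{\Y\in A\}}]$ vanishes almost everywhere, hence everywhere by continuity, so by uniqueness of Fourier transforms the measure is zero, i.e.\ $\E[\ang{\X,\phi_j}\mid\Y]=0$ a.s. Discarding a countable union of null sets and using $\ang{\E[\X\mid\Y],\phi_j}=\E[\ang{\X,\phi_j}\mid\Y]$ for the Bochner conditional expectation gives $\E[\X\mid\Y]=0$ a.s.\ in $\H$.

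The main obstacle is the ``$\Longrightarrow$'' direction: one must pass from the a.e.-in-$\bs s$ vanishing of the cross-characteristic functions $\bs s\mapsto\E[\ang{\X,\phi_j}e^{\mi\ang{\bs s,\Y}}]$ to the genuine conditional-mean statement. This requires recognizing these as Fourier transforms of finite signed measures so that uniqueness applies, upgrading ``a.e.\ $\bs s$'' to ``every $\bs s$'' via continuity of the transforms (guaranteed by $\E\abs{\ang{\X,\phi_j}}<\infty$), and assembling the countably many coordinate conclusions into a single $\H$-valued almost-sure identity, where separability of $\H$ is used. The Fubini/Tonelli bookkeeping in the first step is routine given the stated moment hypotheses.
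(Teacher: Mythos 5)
Your proof is correct: the Tonelli bound $\int_{\R^q}\E\bigl[\,\abs{\ang{\X,\X'}}\bigl(1-\cos\ang{\bs s,\Y-\Y'}\bigr)\bigr]\norm{\bs s}^{-(1+q)}\,\mathrm{d}\bs s\leqslant c_q\,\E\bigl[\norm{\X}\norm{\X'}\norm{\Y-\Y'}\bigr]<\infty$ legitimately reduces the statement to $\E[\ang{\X,\X'}\norm{\Y-\Y'}]=-c_q^{-1}\int_{\R^q}\norm{G_{\bs s}}^2\norm{\bs s}^{-(1+q)}\,\mathrm{d}\bs s$ (using the standing assumption $\E\X=0$, which you correctly invoke), and your Fourier-uniqueness step, with continuity of $\bs s\mapsto\E[\ang{\X,\phi_j}e^{\mi\ang{\bs s,\Y}}]$ upgrading a.e.\ vanishing to everywhere and separability of $\H$ assembling the coordinatewise conclusions, soundly delivers $\E[\X\mid\Y]=0$ a.s. Note that the paper itself gives no proof of Lemma \ref{lem:prop1inlee} --- it is imported verbatim as Proposition 1 of \cite{lee2020testing} --- and your argument is essentially the canonical one from that reference and the MDD literature descending from \cite{szekely2007measuring} and \cite{shao2014martingale}, so there is no in-paper proof from which it could diverge.
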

\paragraph{Proof of Theorem $\ref{theorem, MDDO and conditional mean independence}$}
\begin{proof}
Clearly, (ii) is a direct consequence of Lemma $\ref{lemma, equivalence of two def of MDDO}$ and the following lemma.

\begin{lemma}[Lemma 15 in \citealt{chen2023optimality}]\label{lem:cov TX}
If $T$ is an operator defined on $\mc H_1\to\mc H_2$ where $\mc H_i,i=1,2$ is a Hilbert space. $\bs X\in\mc H_1$ is a random element satisfying $\mb E[\bs X]=0$ . Then we have $\mr{var}(T\bs X)=T\mr{var}(\bs X)T^*$.
\end{lemma}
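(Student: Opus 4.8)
The plan is to verify the operator identity by testing both sides against an arbitrary vector and reducing everything to the expectation formula for the variance operator recorded in the Notations (which extends verbatim from the single-space case to a map $\mathcal{H}_1\to\mathcal{H}_1$ or $\mathcal{H}_2\to\mathcal{H}_2$). First I would observe that boundedness and linearity of $T$ give $\mathbb{E}[T\boldsymbol{X}]=T\,\mathbb{E}[\boldsymbol{X}]=0$, so that---exactly as for $\boldsymbol{X}$ itself---the centered second-moment definition collapses, and for every $w\in\mathcal{H}_2$ one has $\mathrm{var}(T\boldsymbol{X})(w)=\mathbb{E}[\langle T\boldsymbol{X},w\rangle\,T\boldsymbol{X}]$, while $\mathrm{var}(\boldsymbol{X})(z)=\mathbb{E}[\langle \boldsymbol{X},z\rangle\,\boldsymbol{X}]$ for $z\in\mathcal{H}_1$.

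Next I would fix $w\in\mathcal{H}_2$ and rewrite the inner product using the defining property of the adjoint, $\langle T\boldsymbol{X},w\rangle_{\mathcal{H}_2}=\langle \boldsymbol{X},T^{*}w\rangle_{\mathcal{H}_1}$. Substituting this into the displayed expression yields $\mathrm{var}(T\boldsymbol{X})(w)=\mathbb{E}[\langle \boldsymbol{X},T^{*}w\rangle\,T\boldsymbol{X}]$. Since the scalar $\langle \boldsymbol{X},T^{*}w\rangle$ can be absorbed through the linear $T$, and since $T$ is a deterministic bounded operator commuting with the (Bochner) expectation, I obtain $\mathrm{var}(T\boldsymbol{X})(w)=T\,\mathbb{E}[\langle \boldsymbol{X},T^{*}w\rangle\,\boldsymbol{X}]=T\,\mathrm{var}(\boldsymbol{X})(T^{*}w)$. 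Recognizing the right-hand side as $(T\,\mathrm{var}(\boldsymbol{X})\,T^{*})(w)$ and invoking the arbitrariness of $w\in\mathcal{H}_2$ completes the argument.

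The only point requiring care is the interchange $T\,\mathbb{E}[\,\cdot\,]=\mathbb{E}[T\,\cdot\,]$, namely that a bounded linear operator may be pulled outside the vector-valued expectation; this is standard and is in fact precisely the content of the definition of $\mathbb{E}[\,\cdot\,]$ for random elements and random operators recalled in the Notations, so it should be invoked rather than reproved. Everything else is a direct unwinding of the tensor-product definition $(x\otimes y)(z)=\langle x,z\rangle y$ together with the adjoint relation, and I would not expect any genuine obstacle here.
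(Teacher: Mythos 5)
Your proof is correct, but there is nothing in the paper to compare it against: the paper does not prove this lemma at all, importing it verbatim by citation as Lemma 15 of \citealt{chen2023optimality}. Your argument---test both sides on an arbitrary $w\in\mc H_2$, use $\var(T\bs X)(w)=\E[\langle T\bs X,w\rangle T\bs X]$ after noting $\E[T\bs X]=T\E[\bs X]=0$, pass the adjoint across the inner product, and pull the deterministic bounded $T$ out of the Bochner expectation---is the standard proof of this fact and precisely what the citation stands in for. Two small points worth making explicit for completeness: the hypothesis should read \emph{bounded} linear operator (the paper's own notation defines $T^*$ only for bounded $T$, and the interchange $\E[T\,\cdot\,]=T\,\E[\,\cdot\,]$ also uses boundedness), and one should note $\E\|\bs X\|^2<\infty$ so that both $\var(\bs X)$ and $\var(T\bs X)$ exist, with $\E\|T\bs X\|^2\leqslant\|T\|^2\,\E\|\bs X\|^2$ handling the latter; both are covered by the paper's standing moment assumptions, so neither is a genuine gap.
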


Now we start  to prove (i).
 First, one has
\begin{align*}\mathrm{MDDO}(\boldsymbol{X}|\Y)=0 &\Longleftrightarrow \mathrm{MDDO}(\boldsymbol{X}|\Y)({\bs{\beta}})=0,\quad\forall{\bs{\beta}}\in\mb{S}_{\mathcal H};\\
\mathbb E[\boldsymbol{X}|\Y]=0~~\text{a.s.}&\Longleftrightarrow\langle\mb E[\boldsymbol{X}|\Y],{\bs{\beta}}\rangle{\bs{\beta}}=0~~\text{a.s.} \quad\forall{\bs{\beta}}\in\mb{S}_{\mathcal H},
\end{align*}
where $\mb{S}_\mc{H}=\{{\bs{\beta}}\in\mc H:\|{\bs{\beta}}\|=1\}$. Second, from Lemma $\ref{lem: Txx=0tuiTx=0}$, one knows that
\begin{align*}
\mathrm{MDDO}(\boldsymbol{X}|\Y)({\bs{\beta}})=0&\Longleftrightarrow\langle\mathrm{MDDO}(\boldsymbol{X}|\Y)({\bs{\beta}}),{\bs{\beta}}\rangle=0.
\end{align*}
 Then under Assumption $\ref{as:joint distribution assumption}$, by Lemma $\ref{lemma, MDDO and FMDD}$ and $\ref{lem:prop1inlee}$, one has
\begin{align*}
&\langle\mathrm{MDDO}(\boldsymbol{X}|\Y)({\bs{\beta}}),{\bs{\beta}}\rangle=0\Longleftrightarrow\mathbb E[\big\langle\langle \boldsymbol{X},{\bs{\beta}}\rangle{\bs{\beta}},\langle \boldsymbol{X}',{\bs{\beta}}\rangle{\bs{\beta}}\rangle\|\Y-\Y'\|]=0\\
&\qquad\qquad\qquad\qquad\qquad\Longleftrightarrow\mathbb E[\langle \bs X,{\bs{\beta}}\rangle{\bs{\beta}}|\Y]=\langle \mb E[\boldsymbol{X}|\Y],{\bs{\beta}}\rangle{\bs{\beta}}=0~~\text{a.s.}
\end{align*}
This finishes the proof of Theorem $\ref{theorem, MDDO and conditional mean independence}$.
\end{proof}


\section{Proof of Lemma \ref{lemma: SE=GammaS}}
Recall the following fact in FSIR.
\begin{lemma}\label{lemma, direct result of linearity condition}~\\
Under Assumption $\ref{as:Linearity condition and Coverage condition}~ \boldsymbol{\mathrm{{i)}}}$, we have $\mathcal S_{\mathbb E(\boldsymbol{X}|\Y)}\subseteq \Gamma \mc S_{\Y|\bs X}\subseteq \mc H$.
\end{lemma}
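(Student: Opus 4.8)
The plan is to establish the nontrivial inclusion $\mc S_{\mathbb E(\bs X|\Y)} \subseteq \Gamma \mc S_{\Y|\bs X}$ by a duality argument; the inclusion $\Gamma \mc S_{\Y|\bs X} \subseteq \mc H$ is immediate. Since $\mc S_{\Y|\bs X}$ is $d$-dimensional, its image $\Gamma \mc S_{\Y|\bs X}=\mathrm{span}\{\Gamma\bbeta_1,\dots,\Gamma\bbeta_d\}$ is finite-dimensional, hence closed, so that $(\Gamma \mc S_{\Y|\bs X})^{\perp\perp}=\Gamma \mc S_{\Y|\bs X}$. It therefore suffices to show that $\E[\bs X \mid \Y=\bs y]\perp \bs b$ for every $\bs b\in(\Gamma \mc S_{\Y|\bs X})^{\perp}$ and a.e. $\bs y$; equivalently, pulling the bounded functional $\langle \bs b,\cdot\rangle$ through the Hilbert-space-valued conditional expectation, it suffices to prove $\E[\langle \bs b,\bs X\rangle \mid \Y]=0$ a.s. for every such $\bs b$.

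Fix $\bs b\in(\Gamma \mc S_{\Y|\bs X})^{\perp}$. First I would record that $\bs b\perp \Gamma \mc S_{\Y|\bs X}$ forces the covariance of $\langle \bs b,\bs X\rangle$ with each score $\langle \bbeta_k,\bs X\rangle$ to vanish: using $\E[\bs X]=0$ and $\Gamma=\E[\bs X\otimes\bs X]$, one gets $\mathrm{cov}(\langle \bs b,\bs X\rangle,\langle \bbeta_k,\bs X\rangle)=\langle \bs b,\Gamma\bbeta_k\rangle=0$. Consequently the $L^2$ best linear predictor of $\langle \bs b,\bs X\rangle$ from the centered scores $\langle \bbeta_1,\bs X\rangle,\dots,\langle \bbeta_d,\bs X\rangle$ is identically zero. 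The linearity condition (Assumption~\ref{as:Linearity condition and Coverage condition}~\textbf{i)}) asserts that $\E[\langle \bs b,\bs X\rangle \mid \langle \bbeta_1,\bs X\rangle,\dots,\langle \bbeta_d,\bs X\rangle]$ is a linear form in the scores; since any conditional expectation is the $L^2$ projection onto all functions of the conditioning variables, a linear version of it must coincide with the projection onto their linear span, i.e. with the best linear predictor. Hence $\E[\langle \bs b,\bs X\rangle \mid P_{\mc S_{\Y|\bs X}}\bs X]=0$.

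Finally I would invoke the defining property of the central subspace, $\Y \perp \!\!\! \perp \bs X \mid P_{\mc S_{\Y|\bs X}}\bs X$, which gives $\E[\langle \bs b,\bs X\rangle \mid P_{\mc S_{\Y|\bs X}}\bs X,\Y]=\E[\langle \bs b,\bs X\rangle \mid P_{\mc S_{\Y|\bs X}}\bs X]$. Combining this with the previous step and the tower property yields
\begin{align*}
\E[\langle \bs b,\bs X\rangle \mid \Y]
&= \E\big[\E[\langle \bs b,\bs X\rangle \mid P_{\mc S_{\Y|\bs X}}\bs X,\Y]\bigm|\Y\big]\\
&= \E\big[\E[\langle \bs b,\bs X\rangle \mid P_{\mc S_{\Y|\bs X}}\bs X]\bigm|\Y\big]=0.
\end{align*}
Thus $\langle \bs b,\E[\bs X\mid \Y=\bs y]\rangle=0$ for a.e. $\bs y$ and every $\bs b\in(\Gamma \mc S_{\Y|\bs X})^{\perp}$, so each $\E[\bs X\mid \Y=\bs y]$ lies in $(\Gamma \mc S_{\Y|\bs X})^{\perp\perp}=\Gamma \mc S_{\Y|\bs X}$, whence $\mc S_{\mathbb E(\bs X|\Y)}\subseteq \Gamma \mc S_{\Y|\bs X}$.

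The step I expect to be the main obstacle is the identification of the conditional expectation with the best linear predictor: one must argue cleanly that the linearity condition, together with the vanishing covariances, forces $\E[\langle \bs b,\bs X\rangle \mid P_{\mc S_{\Y|\bs X}}\bs X]=0$ through the orthogonality characterization of $L^2$ projection, namely that the residual is orthogonal to every function — in particular every linear function — of the scores. Secondary technical points to handle with care in the functional setting are the exchange $\langle \bs b,\E[\bs X\mid\Y]\rangle=\E[\langle \bs b,\bs X\rangle \mid \Y]$ for the Hilbert-valued (Bochner) conditional expectation, and the remark that finite-dimensionality of $\mc S_{\Y|\bs X}$ renders $\Gamma \mc S_{\Y|\bs X}$ closed, so that the orthogonal-bicomplement argument actually closes.
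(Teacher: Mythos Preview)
Your proof is correct and is precisely the standard SIR argument that underlies the cited result. The paper does not actually write out a proof of this lemma; it simply states that it is ``a trivial generalization of \cite[Theorem 2.1]{ferre2003functional} from univariate response to multivariate response.'' Your duality argument --- showing that any $\bs b\in(\Gamma\mc S_{\Y|\bs X})^\perp$ satisfies $\E[\langle\bs b,\bs X\rangle\mid\Y]=0$ by combining the linearity condition with the conditional independence $\Y\perp\!\!\!\perp\bs X\mid P_{\mc S_{\Y|\bs X}}\bs X$ and the tower property --- is exactly the proof one finds in Ferr\'e and Yao (2003) (and, ultimately, in Li (1991) for the multivariate case). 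So you have supplied the details the paper omits, by the same route the cited reference takes.
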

It is a trivial generalization of    \cite[Theorem 2.1]{ferre2003functional} from univariate response to multivariate response.
\paragraph{Proof of Lemma $\ref{lemma: SE=GammaS}$}
\begin{proof}
First, we prove that $\mathcal{S}_{\mathbb{E}(\bs X|\Y)}^\perp\subseteq \mathrm{Im}\{\mathrm{var(\mb{E}(\bs X|\Y))}\}^\perp$. For any ${\bs{\beta}}\in\mathcal{S}_{\mathbb{E}(\bs X|\Y)}^\perp$, one has $\langle{\bs{\beta}},\mb{E}(\bs X|\Y)\rangle=0$ a.s. Then for any $\bs\alpha\in\mathcal{H}$, one can get
\begin{align*}
\langle{\bs{\beta}},\mathrm{var}(\mb{E}(\bs X|\Y))\bs\alpha\rangle&=\langle{\bs{\beta}},\mb E\lmi\mb{E}(\bs X|\Y)\otimes \mb{E}(\bs X|\Y)\rmi\bs\alpha\rangle\\
&=\mb E\big(\langle\mb{E}(\bs X|\Y),\bs\alpha\rangle\langle{\bs{\beta}},\mb{E}(\bs X|\Y)\rangle\big)=0,
\end{align*}
which means that ${\bs{\beta}}\in\mathrm{Im}\{\mathrm{var}(\mb{E}(\bs X|\Y))\}^\perp$. Moreover, one has
\begin{align*}\mathcal{S}_{\mathbb{E}(\bs X|\Y)}^\perp\subseteq \mathrm{Im}\{\mathrm{var}(\mb{E}(\bs X|\Y))\}^\perp
\Longrightarrow\overline{\mathcal{S}_{\mathbb{E}(\bs X|\Y)}}\supseteq\overline{\mathrm{Im}}\{\mathrm{var}(\mb{E}(\bs X|\Y))\}.
\end{align*}
Thus, $\overline{\mathrm{Im}}\{\mathrm{var}(\mb{E}(\bs X|\Y))\}\subseteq\overline{\mathcal{S}_{\mathbb{E}(\bs X|\Y)}}\subseteq\overline{\Gamma\mathcal{S}_{\Y|\bs X}}$ by Lemma $\ref{lemma, direct result of linearity condition}$. According to Assumption $\ref{as:Linearity condition and Coverage condition}$ \textbf{ii)}, one can get
\[\mathrm{dim}\left(\overline{\mathrm{Im}}\{\mathrm{var}(\mb{E}(\bs X|\Y))\}\right)=\mathrm{dim}\left(\overline{\mathcal{S}_{\mathbb{E}(\bs X|\Y)}}\right)=\mathrm{dim}(\overline{\Gamma\mathcal{S}_{\Y|\bs X}})=d.\]
One can complete the proof since finite dimension subspaces are closed.
\end{proof}

\section{Proof of Theorem \ref{theorem, MDDO and IRS}}
\begin{proof}
For convenience, we abbreviate $\mathrm{MDDO}(\boldsymbol{X}|\Y)$ to ${M}$. According to Theorem $\ref{theorem, MDDO and conditional mean independence}$ and Lemma $\ref{lem: Txx=0tuiTx=0}$, one can get
\begin{align*}{\bs{\beta}}\in\mathcal S_{\mb E(\boldsymbol{X}|\Y)}^\perp&\Longleftrightarrow\langle {\bs{\beta}},\mathbb E(\boldsymbol{X}|\Y)\rangle=0~~\text{a.s.}\Longleftrightarrow\mathbb E(\langle {\bs{\beta}},\boldsymbol{X}\rangle|\Y)=0~~\text{a.s.}\\
&\Longleftrightarrow\mathrm{MDDO}(\langle {\bs{\beta}},\boldsymbol{X}\rangle|\Y)=0\Longleftrightarrow\langle {M}{\bs{\beta}},{\bs{\beta}}\rangle=0\\
&\Longleftrightarrow{M}{\bs{\beta}}=0\Longleftrightarrow {\bs{\beta}}\in\mathrm{null}(M)=\overline{\mathrm{Im}}(M)^\perp,
\end{align*}
which means that $\mathcal S_{\mb E(\boldsymbol{X}|\Y)}^\perp=\overline{\mathrm{Im}}(M)^\perp$ and $\overline{\mathcal S_{\mb E(\boldsymbol{X}|\Y)}}=\overline{\mathrm{Im}}(M)$.
One can complete the proof since finite dimension subspaces are closed.
\end{proof}
\section{Proof of Lemma \ref{lemma, way of estimate truncate central subspace}}
Before proving Lemma $\ref{lemma, way of estimate truncate central subspace}$, we give the following lemma.
\begin{lemma}\label{lem: colPBP equal colPB operator}
Assume that $P$ is a bounded linear operator from a Hilbert space $\wt{\mc H}$ to itself and $B$ is a positive semi-definite operator from $\wt{\mc H}$ to itself. 
Then we have $\overline{\mathrm{Im}}(PBP^*)=\overline{\mathrm{Im}}(PB)$.
\end{lemma}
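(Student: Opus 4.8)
The plan is to translate the statement about closures of images into one about null spaces of adjoints, using the standard Hilbert-space identity $\overline{\mathrm{Im}}(T)=\mathrm{null}(T^*)^\perp$ valid for any bounded operator $T$. Since $B$ is self-adjoint, the operator $PBP^*$ is itself self-adjoint (indeed $(PBP^*)^*=PB^*P^*=PBP^*$), so $\overline{\mathrm{Im}}(PBP^*)=\mathrm{null}(PBP^*)^\perp$, while $(PB)^*=BP^*$ gives $\overline{\mathrm{Im}}(PB)=\mathrm{null}(BP^*)^\perp$. Hence it suffices to prove the single operator identity $\mathrm{null}(PBP^*)=\mathrm{null}(BP^*)$, after which taking orthogonal complements finishes the argument in one stroke.

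To establish this identity I would argue by double inclusion. The inclusion $\mathrm{null}(BP^*)\subseteq\mathrm{null}(PBP^*)$ is immediate: if $BP^*x=0$ then $PBP^*x=P(BP^*x)=0$. The reverse inclusion is where the positive semi-definiteness of $B$ is essential, and I expect this to be the main (though modest) obstacle. Here I would introduce the positive semi-definite square root $B^{1/2}$, which exists and is self-adjoint because $B$ is a bounded positive semi-definite operator, and compute, for $x\in\mathrm{null}(PBP^*)$,
\begin{align*}
0=\langle PBP^*x,x\rangle=\langle BP^*x,P^*x\rangle=\langle B^{1/2}P^*x,B^{1/2}P^*x\rangle=\|B^{1/2}P^*x\|^2,
\end{align*}
so that $B^{1/2}P^*x=0$ and therefore $BP^*x=B^{1/2}(B^{1/2}P^*x)=0$, i.e.\ $x\in\mathrm{null}(BP^*)$.

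Combining the two inclusions yields $\mathrm{null}(PBP^*)=\mathrm{null}(BP^*)$, and taking orthogonal complements gives $\overline{\mathrm{Im}}(PBP^*)=\overline{\mathrm{Im}}(PB)$. The only facts I rely on beyond elementary Hilbert-space algebra are the complement identity $\overline{\mathrm{Im}}(T)=\mathrm{null}(T^*)^\perp$ and the existence of a self-adjoint square root of a bounded positive semi-definite operator; both are standard. An alternative but essentially equivalent route would factor $PBP^*=AA^*$ with $A:=PB^{1/2}$, invoke $\overline{\mathrm{Im}}(AA^*)=\overline{\mathrm{Im}}(A)$, and then reconcile $\overline{\mathrm{Im}}(PB^{1/2})$ with $\overline{\mathrm{Im}}(PB)$ via the same square-root computation; I prefer the null-space formulation since it dispatches both equalities simultaneously and keeps the self-adjointness bookkeeping minimal.
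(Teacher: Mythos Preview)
Your proof is correct and follows essentially the same route as the paper: both reduce the image-closure equality to the null-space identity $\mathrm{null}(PBP^*)=\mathrm{null}(BP^*)$ via $\overline{\mathrm{Im}}(T)=\mathrm{null}(T^*)^\perp$, then exploit positive semi-definiteness for the nontrivial inclusion. The only cosmetic difference is that the paper dispatches the key step by invoking its earlier Cauchy--Schwarz lemma ($\langle Tx,x\rangle=0\Leftrightarrow Tx=0$ for positive semi-definite $T$) applied to both $PBP^*$ and $B$, whereas you compute directly with the square root $B^{1/2}$; these are interchangeable devices.
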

\begin{proof}
It suffices to show that $\mnull(BP^*)=\mnull(PBP^*)$. First, since $B$ is positive semi-definite, one has $\langle x,PBP^*x\rangle = \langle P^*x,BP^*x \rangle\geqslant 0~(\forall x\in\wt{\mc H})$. Thus $PBP^*$ is a positive semi-definite operator on $\wt{\H}$.
For any $y\in\wt{\H}$, we have 
\begin{align*}
PBP^*y=0\overset{(a)}{\Longleftrightarrow}\langle y,PBP^*y\rangle = \langle P^*y,BP^*y \rangle=0\overset{(b)}{\Longleftrightarrow} BP^*y=0. 
\end{align*}
where $(a)$ and $(b)$ come from Lemma $\ref{lem: Txx=0tuiTx=0}$.
Thus $\mnull(PBP^*)=\mnull(BP^*)$.
\end{proof}

\paragraph{Proof of Lemma $\ref{lemma, way of estimate truncate central subspace}$}
\begin{proof}
For convenience, we abbreviate $\mathrm{MDDO}(\boldsymbol{X}|\Y)$ and $\mathrm{MDDO}(\boldsymbol{X}^{(m)}|\Y)$ to ${M}$ and $M_m$ respectively. 

By Corollary $\ref{corollary, MDDO and central subspace}$, one can get $\Gamma\mathcal{S}_{\Y|\boldsymbol{X}}=\mathrm{Im}(M)$. Thus,
\begin{align}\label{eq: corollary, MDDO and central subspace}
\Pi_m\Gamma\mathcal{S}_{\Y|\boldsymbol{X}}=\Pi_m\mathrm{Im}(M)=\mathrm{Im}(\Pi_m M).
\end{align}
It is easy to check that
\begin{align}
\Gamma_m&:=\mathrm{var}(\bs X^{(m)})=\Pi_m\Gamma\Pi_m=\Pi_m\Gamma=\Gamma\Pi_m=\sum\limits_{i=1}^m\lambda_i\phi_i\otimes\phi_i.\label{eq: Gamma m def}
\end{align}
On the one hand, by the definition of $\mathcal{S}^{(m)}_{{\Y|\boldsymbol{X}}}$ and $\Gamma_m$ (see \eqref{def: truncated central subspace} and \eqref{eq: Gamma m def}), one can get
\begin{align}\label{eq:Pim Gamma S}
\Pi_m\Gamma\mathcal{S}_{\Y|\boldsymbol{X}}&=\Pi_m\Gamma\Pi_m\mathcal{S}_{\Y|\boldsymbol{X}}=(\Pi_m\Gamma)(\Pi_m\mathcal{S}_{\Y|\boldsymbol{X}})=\Gamma_m\mathcal{S}^{(m)}_{{\Y|\boldsymbol{X}}}.
\end{align}
On the other hand, one has $\overline{\mathrm{Im}}(\Pi_m M)=\overline{\mathrm{Im}}(\Pi_m M\Pi_m)$ by Lemma $\ref{lem: colPBP equal colPB operator}$. Since $\Pi_m M$ and $\Pi_m M\Pi_m$ are both of finite rank, one can further get
\begin{align*}
\mathrm{Im}(\Pi_m M)&=\overline{\mathrm{Im}}(\Pi_m M)=\overline{\mathrm{Im}}(\Pi_m M\Pi_m)=\mathrm{Im}(\Pi_mM\Pi_m).
\end{align*}
Then according to Theorem $\ref{theorem, MDDO and conditional mean independence}$(ii), one has
\begin{align}
\mathrm{Im}(\Pi_m M)=\mathrm{Im}(\Pi_mM\Pi_m)=\mathrm{Im}(M_m).\label{eq:Pim span M}
\end{align}
Combining \eqref{eq:Pim Gamma S}, \eqref{eq:Pim span M} with \eqref{eq: corollary, MDDO and central subspace}, one has $\Gamma_m\mathcal{S}^{(m)}_{{\Y|\boldsymbol{X}}}=\mathrm{Im}\{M_m\}$.
Finally, one can get $ \Gamma_m^\dagger\mathrm{Im}\{M_m\}=\Gamma_m^\dagger\Gamma_m\mathcal{S}^{(m)}_{{\Y|\boldsymbol{X}}}=\Pi_m\mathcal{S}^{(m)}_{{\Y|\boldsymbol{X}}}=\mathcal{S}^{(m)}_{{\Y|\boldsymbol{X}}}$.
\end{proof}

\section{Wely Inequality for a Self-adjoint and Compact Operator}\label{ap:Wely inequality for self-adjoint and compact operators}
First, we show the following three results in standard functional analysis textbook.
\begin{lemma}[Spectral theorem]\label{thm: Spectral theorem}Let $\wt{\mathcal{H}}$ be a Hilbert space and $A:\wt{\mc{H}}\to\wt{\mc{H}}$ be a compact, self-adjoint operator. There is an at most countable orthonormal basis $\{\wt e_j\}_{j\in J}$ ($J=\{1,\cdots,n\}$ or $\mathbb{Z}_{\geqslant1}$) of $\wt{\mathcal{H}}$ and eigenvalues $\{\wt\lambda_j\}_{j\in J}$ with $|\wt\lambda_1|\geqslant|\wt\lambda_2|\geqslant\cdots\geqslant0$ converging to zero, such that
\begin{align*}
x=\sum_{j\in J}\langle x,\wt e_j\rangle \wt e_j;\qquad Ax=\sum_{j\in J}\wt\lambda_j\langle x,\wt e_j\rangle \wt e_j,\qquad x \in\wt{\mathcal{H}}.
\end{align*}
\end{lemma}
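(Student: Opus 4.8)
The plan is to build the orthonormal eigenbasis one vector at a time, reducing the whole statement to a single analytic fact: a nonzero compact self-adjoint operator attains its norm at an eigenvector. The starting point is the self-adjoint norm identity $\norm{A}=\sup_{\norm{x}=1}\abs{\langle Ax,x\rangle}$, which lets me locate eigenvalues at the extremes of the real quadratic form $x\mapsto\langle Ax,x\rangle$. I would first record the elementary invariance fact I will use repeatedly: if $e$ is an eigenvector of the self-adjoint $A$, then $\{e\}^\perp$ is $A$-invariant, since $\langle Ax,e\rangle=\langle x,Ae\rangle=\wt\lambda\langle x,e\rangle=0$ whenever $x\perp e$.

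The main obstacle, and the only place compactness is genuinely used to \emph{produce} a vector, is the existence lemma: if $A\neq0$ then there is a unit vector $e$ with $Ae=\wt\lambda e$ and $\abs{\wt\lambda}=\norm{A}$. I would prove it by taking a maximizing sequence $\norm{x_n}=1$ with $\langle Ax_n,x_n\rangle\to\wt\lambda$, where $\abs{\wt\lambda}=\norm{A}$ by the norm identity and the sign of $\wt\lambda$ is fixed in the limit. The key computation is
\[
\norm{Ax_n-\wt\lambda x_n}^2=\norm{Ax_n}^2-2\wt\lambda\langle Ax_n,x_n\rangle+\wt\lambda^2\leqslant 2\wt\lambda^2-2\wt\lambda\langle Ax_n,x_n\rangle\longrightarrow0,
\]
using $\norm{Ax_n}\leqslant\norm{A}=\abs{\wt\lambda}$. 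Compactness of $A$ then yields a subsequence with $Ax_n\to y$; since $\wt\lambda\neq0$ this forces $x_n\to e:=y/\wt\lambda$, and passing to the limit in $Ax_n-\wt\lambda x_n\to0$ gives $Ae=\wt\lambda e$ with $\norm{e}=1$.

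With the existence lemma in hand, I would iterate. Set $\wt{\mathcal H}_1=\wt{\mathcal H}$, pick $e_1,\wt\lambda_1$ realizing $\norm{A}$, and recursively let $\wt{\mathcal H}_{k}=\{e_1,\dots,e_{k-1}\}^\perp$, which is $A$-invariant by the invariance fact; the restriction $A|_{\wt{\mathcal H}_k}$ is again compact and self-adjoint, so either it is zero (the process stops and $A$ has finite rank) or it supplies $e_k,\wt\lambda_k$ with $\abs{\wt\lambda_k}=\norm{A|_{\wt{\mathcal H}_k}}$. By construction $\abs{\wt\lambda_1}\geqslant\abs{\wt\lambda_2}\geqslant\cdots$ and the $e_k$ are orthonormal. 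To see $\wt\lambda_k\to0$ in the infinite case, I would argue by contradiction: if $\abs{\wt\lambda_k}\geqslant\delta>0$ for all $k$, then $\norm{Ae_k-Ae_l}^2=\wt\lambda_k^2+\wt\lambda_l^2\geqslant2\delta^2$ for $k\neq l$, so $\{Ae_k\}$ has no convergent subsequence although $\{e_k\}$ is bounded, contradicting compactness.

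It remains to upgrade the orthonormal system $\{e_k\}$ to a basis and read off the two expansions. Let $M=\overline{\mspan}\{e_k\}$. For $x\in M^\perp$ we have $x\in\wt{\mathcal H}_k$ for every $k$, hence $\norm{Ax}\leqslant\norm{A|_{\wt{\mathcal H}_k}}\norm{x}=\abs{\wt\lambda_k}\norm{x}\to0$, so $Ax=0$; that is, $M^\perp\subseteq\mnull(A)$ and every vector of $M^\perp$ is an eigenvector for eigenvalue $0$. Taking $\wt{\mathcal H}$ separable (as for $\mathcal L_2([0,1])$), I would fix a countable orthonormal basis of $M^\perp$, append it to $\{e_k\}$ with eigenvalues $0$, and reindex the whole collection as $\{\wt e_j\}_{j\in J}$. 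Completeness then gives $x=\sum_{j\in J}\langle x,\wt e_j\rangle\wt e_j$, and applying $A$ termwise (justified by continuity of $A$ and convergence of the partial sums) yields $Ax=\sum_{j\in J}\wt\lambda_j\langle x,\wt e_j\rangle\wt e_j$, the adjoined kernel vectors contributing nothing. The genuinely hard step is the existence lemma; everything after it is bookkeeping resting on invariance of orthogonal complements and the compactness-driven decay of the eigenvalues.
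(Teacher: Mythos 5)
Your proof is correct, and it is the canonical textbook argument: extract an eigenvector of maximal modulus via a maximizing sequence for the quadratic form together with the identity $\|A\|=\sup_{\|x\|=1}|\langle Ax,x\rangle|$, iterate on orthogonal complements (which are $A$-invariant by self-adjointness), force eigenvalue decay by compactness, and complete the orthonormal system with a basis of the kernel. There is, strictly speaking, nothing in the paper to compare it against: the paper states this lemma without proof, introducing it as one of three results quoted from "standard functional analysis textbook," so your argument simply supplies the proof the paper outsources. One point you handled with appropriate care: as written, the lemma claims an at most countable orthonormal basis of an arbitrary Hilbert space $\wt{\mathcal{H}}$, which is false without separability (the kernel of $A$ may be nonseparable even though the closed span of the eigenvectors with nonzero eigenvalues is always separable); your explicit restriction to separable $\wt{\mathcal{H}}$ is exactly the hypothesis the paper needs and assumes elsewhere (its Sin Theta lemma is stated for separable spaces, and the ambient space throughout is $\mathcal L_2([0,1])$). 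A final cosmetic remark, directed at the statement rather than your proof: when the nonzero spectrum is infinite and the kernel is nontrivial, no reindexing over $\mathbb{Z}_{\geqslant 1}$ can literally realize $|\wt\lambda_1|\geqslant|\wt\lambda_2|\geqslant\cdots$ with the zero eigenvalues included, since a zero at position $j_0$ would force all later entries to vanish; the usual fix is to order only the nonzero eigenvalues and append the kernel vectors without ordering claims, which is what your construction in fact produces, and both expansions are unaffected.
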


\begin{lemma}[Rayleigh's principle]\label{lem:Rayleigh operator}Let $A$ be a compact, self-adjoint operator. If $\{\wt e_j\}_{j\in J}$ and $\{\wt\lambda_j\}_{j\in J}$ are eigenvectors and eigenvalues define in Lemma $\ref{thm: Spectral theorem}$ respectively. Then
\[|\wt\lambda_1|=\mathop{\sup\limits_{\|u\|=1}}|\langle Au,u\rangle|;\qquad|\wt\lambda_n|=\mathop{\sup\limits_{\|u\|=1}}_{u\in\{\wt e_1,\cdots,\wt e_{n-1}\}^\perp}|\langle Au,u\rangle|~(n\geqslant 2).\]
\end{lemma}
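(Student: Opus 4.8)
The plan is to reduce everything to the spectral representation supplied by Lemma \ref{thm: Spectral theorem} and then read off the extremal values of the quadratic form $u\mapsto\langle Au,u\rangle$ directly from the eigenvalue expansion. Since $\{\wt e_j\}_{j\in J}$ is an orthonormal basis of $\wt{\mc H}$, every unit vector $u$ admits the expansion $u=\sum_{j\in J}\langle u,\wt e_j\rangle\wt e_j$ with $\sum_{j\in J}|\langle u,\wt e_j\rangle|^2=\|u\|^2=1$. Applying $A$ term by term via Lemma \ref{thm: Spectral theorem} and using self-adjointness (so that $\wt\lambda_j\in\R$ and the cross terms vanish by orthonormality), I would first establish the key identity
\[
\langle Au,u\rangle=\sum_{j\in J}\wt\lambda_j\,|\langle u,\wt e_j\rangle|^2 .
\]

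For the first claim, I would bound this quadratic form using the triangle inequality and the monotonicity $|\wt\lambda_1|\geqslant|\wt\lambda_j|$ for all $j$:
\[
|\langle Au,u\rangle|\leqslant\sum_{j\in J}|\wt\lambda_j|\,|\langle u,\wt e_j\rangle|^2\leqslant|\wt\lambda_1|\sum_{j\in J}|\langle u,\wt e_j\rangle|^2=|\wt\lambda_1|,
\]
which holds for every unit $u$ and thus gives $\sup_{\|u\|=1}|\langle Au,u\rangle|\leqslant|\wt\lambda_1|$. The reverse inequality follows by testing $u=\wt e_1$, for which the identity yields $\langle A\wt e_1,\wt e_1\rangle=\wt\lambda_1$, so that $|\langle A\wt e_1,\wt e_1\rangle|=|\wt\lambda_1|$ and the supremum is attained.

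For the second claim, I would repeat the argument after restricting to the subspace $\{\wt e_1,\dots,\wt e_{n-1}\}^\perp$. For such $u$ one has $\langle u,\wt e_j\rangle=0$ for $j<n$, so the expansion above collapses to $\langle Au,u\rangle=\sum_{j\geqslant n}\wt\lambda_j|\langle u,\wt e_j\rangle|^2$; invoking $|\wt\lambda_n|=\max_{j\geqslant n}|\wt\lambda_j|$ produces the upper bound $|\wt\lambda_n|$, and testing $u=\wt e_n$, which indeed lies in the required orthogonal complement, attains it.

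Since every step is an elementary manipulation of an absolutely convergent series, I expect no genuine obstacle here; the only points requiring mild care are the justification that the series for $\langle Au,u\rangle$ converges and may be rearranged (which follows from $\sum_j|\langle u,\wt e_j\rangle|^2<\infty$ together with the boundedness of $\{\wt\lambda_j\}$), and the verification that the extremizers $\wt e_1$ and $\wt e_n$ genuinely satisfy their respective constraints, so that the suprema are attained rather than merely approached.
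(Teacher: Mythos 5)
Your proof is correct: the diagonalization identity $\langle Au,u\rangle=\sum_{j\in J}\wt\lambda_j|\langle u,\wt e_j\rangle|^2$ obtained from the spectral expansion, combined with Parseval's identity, the ordering $|\wt\lambda_1|\geqslant|\wt\lambda_2|\geqslant\cdots$, and attainment of the supremum at $\wt e_1$ (resp.\ $\wt e_n$, which indeed lies in $\{\wt e_1,\dots,\wt e_{n-1}\}^\perp$), is precisely the standard textbook argument, and your attention to absolute convergence of the series is the right point of care. The paper itself states this lemma without proof, presenting it as a standard functional-analysis fact, so your argument fills in exactly the proof the authors had in mind rather than taking a different route.
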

\begin{lemma}[Minimax theorem]\label{lem:minimax operator}
Assume that $A$ is a positive semi-definite and compact operator with its eigenvalues $\{\wt\lambda_i\}$ ordered as $\wt\lambda_1\geqslant\dots\geqslant \wt\lambda_n\geqslant\dots\geqslant 0$, then
$$
\wt\lambda_n=\inf_{E_{n-1}}\sup_{x\in E_{n-1}^\perp,\|x\|=1}\langle Ax,x\rangle
$$
where $E_{n-1}$ with dimension $n-1$ is a closed linear subspace of $\wt{\mc H}$.
\end{lemma}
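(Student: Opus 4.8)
The plan is to prove the two inequalities $\inf_{E_{n-1}}\sup \leqslant \wt\lambda_n$ and $\inf_{E_{n-1}}\sup \geqslant \wt\lambda_n$ separately, using throughout the spectral decomposition furnished by Lemma~\ref{thm: Spectral theorem}: there is an orthonormal eigenbasis $\{\wt e_j\}$ with $Ax=\sum_j \wt\lambda_j\langle x,\wt e_j\rangle \wt e_j$ and $\wt\lambda_1\geqslant \wt\lambda_2\geqslant\cdots\geqslant 0$. For any unit vector $x$ with coordinates $x_j:=\langle x,\wt e_j\rangle$ one then has $\langle Ax,x\rangle=\sum_j \wt\lambda_j |x_j|^2$, and this identity is the computational engine behind both bounds.

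First I would establish the ``$\leqslant$'' direction by exhibiting a single good subspace. Taking $E_{n-1}:=\mspan\{\wt e_1,\dots,\wt e_{n-1}\}$, any unit $x\in E_{n-1}^\perp$ satisfies $x_1=\cdots=x_{n-1}=0$, so $\langle Ax,x\rangle=\sum_{j\geqslant n}\wt\lambda_j|x_j|^2\leqslant \wt\lambda_n\sum_{j\geqslant n}|x_j|^2=\wt\lambda_n$ by monotonicity of the eigenvalues. Thus $\sup_{x\in E_{n-1}^\perp,\,\|x\|=1}\langle Ax,x\rangle\leqslant\wt\lambda_n$ for this choice (the value $\wt\lambda_n$ is in fact attained at $x=\wt e_n$), which bounds the infimum from above.

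Next I would prove the ``$\geqslant$'' direction, where the point is that \emph{every} admissible $E_{n-1}$ yields a supremum of at least $\wt\lambda_n$. Fix an arbitrary closed subspace $E_{n-1}$ of dimension $n-1$ and set $F:=\mspan\{\wt e_1,\dots,\wt e_n\}$, which has dimension $n$. Since $\dim F=n>n-1=\dim E_{n-1}$, the orthogonal projection $P_{E_{n-1}}$ restricted to $F$ is a linear map from an $n$-dimensional space into an $(n-1)$-dimensional one, hence has nontrivial kernel; normalizing a nonzero kernel element produces a unit vector $x\in F\cap E_{n-1}^\perp$. For this test vector $x_j=0$ for $j>n$, so $\langle Ax,x\rangle=\sum_{j=1}^n \wt\lambda_j|x_j|^2\geqslant \wt\lambda_n\sum_{j=1}^n|x_j|^2=\wt\lambda_n$. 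Because $x$ is a feasible point of the supremum, $\sup_{x\in E_{n-1}^\perp,\,\|x\|=1}\langle Ax,x\rangle\geqslant\wt\lambda_n$, and taking the infimum over $E_{n-1}$ preserves this bound. Combining the two inequalities gives the claimed equality; the degenerate case $n=1$ (with $E_0=\{0\}$) reduces to Rayleigh's principle in Lemma~\ref{lem:Rayleigh operator}.

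The hard part will be the dimension-counting step in the ``$\geqslant$'' direction: one must argue cleanly that an $(n-1)$-dimensional subspace cannot be orthogonally disjoint from the span of the top $n$ eigenvectors, i.e.\ that a unit vector simultaneously lying in $F$ and orthogonal to $E_{n-1}$ always exists. Once this intersection is secured, the remaining estimates are immediate consequences of the eigenvalue ordering $\wt\lambda_j\geqslant\wt\lambda_n$ on the relevant index ranges together with Parseval's identity $\sum_j|x_j|^2=\|x\|^2=1$.
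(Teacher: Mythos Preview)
Your proof is correct and is the standard Courant--Fischer argument. Note, however, that the paper does not actually supply a proof of this lemma: it is listed among ``three results in standard functional analysis textbook'' and is simply stated without argument, so there is no authorial proof to compare against. Your two-inequality structure, with the specific choice $E_{n-1}=\mspan\{\wt e_1,\dots,\wt e_{n-1}\}$ for the upper bound and the dimension-counting intersection $F\cap E_{n-1}^\perp\neq\{0\}$ for the lower bound, is precisely the textbook route one would expect.
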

Then we give the Wely inequality for a self-adjoint and compact operator.
\begin{proposition}\label{prop: wely operator}
Let $M=N+R$ where $M$, $N$ and $R$ are three self-adjoint and compact operators defined on a Hilbert space $\wt{\mc H}$. Also, $M$ and $N$ are positive semi-definite with their respective eigenvalues $\{\mu_i\},\{\nu_i\}$ ordered as follows
\begin{align*}
M:\mu_1\geqslant\dots\geqslant \mu_n\geqslant\dots\geqslant 0;\qquad
N:\nu_1\geqslant\dots\geqslant \nu_n\geqslant\dots\geqslant 0,
\end{align*}
while $R$'s eigenvalues are $\{\rho_i\}$ ordered as follows:
\[R:|\rho_1|\geqslant\dots\geqslant |\rho_n|\geqslant\dots\geqslant 0.\]
Then the following inequalities hold: $|\mu_k-\nu_k|\leqslant|\rho_1|=\|R\| $, $k\geqslant1$.
\end{proposition}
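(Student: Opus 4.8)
The plan is to reduce everything to the variational (minimax) description of eigenvalues provided by Lemma \ref{lem:minimax operator}, together with the elementary operator-norm bound $|\langle Rx,x\rangle|\leqslant\|R\|$ for unit vectors $x$. First I would dispose of the identity $\|R\|=|\rho_1|$: since $R$ is self-adjoint and compact, Rayleigh's principle (Lemma \ref{lem:Rayleigh operator}) gives $\|R\|=\sup_{\|u\|=1}|\langle Ru,u\rangle|=|\rho_1|$, so it remains to prove the two-sided bound $|\mu_k-\nu_k|\leqslant\|R\|$. I would prove this by establishing the single inequality $\mu_k\leqslant\nu_k+\|R\|$ and then recovering the reverse inequality by a symmetry argument, noting that $N=M+(-R)$ with $\|-R\|=\|R\|$ and that $M,N$ are both positive semi-definite, so the roles of $M$ and $N$ are interchangeable.

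For the key inequality, I would write the minimax formula for $\mu_k$,
\begin{align*}
\mu_k=\inf_{\dim E=k-1}\ \sup_{\substack{x\in E^\perp\\ \|x\|=1}}\langle Mx,x\rangle,
\end{align*}
and then, instead of optimising over $E$, I would plug in the specific $(k-1)$-dimensional subspace $E^\ast:=\mathrm{span}\{\tilde e_1^{\,N},\dots,\tilde e_{k-1}^{\,N}\}$ spanned by the top $k-1$ eigenvectors of $N$ furnished by the spectral theorem (Lemma \ref{thm: Spectral theorem}). Because taking an infimum only makes the value smaller, this yields $\mu_k\leqslant\sup_{x\in (E^\ast)^\perp,\,\|x\|=1}\langle Mx,x\rangle$. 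On any such unit vector $x$ I would split $\langle Mx,x\rangle=\langle Nx,x\rangle+\langle Rx,x\rangle\leqslant\langle Nx,x\rangle+\|R\|$, and then observe that the supremum of $\langle Nx,x\rangle$ over $(E^\ast)^\perp$ with $\|x\|=1$ is exactly $\nu_k$, since $E^\ast$ is the optimal subspace for $N$ (equivalently, by Rayleigh's principle applied to $N$ restricted to $(E^\ast)^\perp$). Combining these gives $\mu_k\leqslant\nu_k+\|R\|$.

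Applying the same reasoning with $M$ and $N$ exchanged and $R$ replaced by $-R$ gives $\nu_k\leqslant\mu_k+\|R\|$, and the two bounds together yield $|\mu_k-\nu_k|\leqslant\|R\|=|\rho_1|$. The main obstacle, and the step deserving the most care, is the correct use of the minimax formula: one must choose the trial subspace $E^\ast$ adapted to $N$ (not to $M$) so that the perturbation $R$ appears only through the uniform bound $\|R\|$ while the $N$-term collapses precisely to $\nu_k$; verifying that this supremum equals $\nu_k$ rests on the attainment of the infimum in Lemma \ref{lem:minimax operator} by the eigen-subspace, which follows from the spectral decomposition of the compact self-adjoint operator $N$. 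The only other point needing a remark is that all suprema and the infimum are genuinely attained here because compactness forces the eigenvalues to accumulate only at $0$, so no limiting or approximation argument is required.
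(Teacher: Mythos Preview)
Your proof is correct and follows essentially the same strategy as the paper: both use the minimax characterization of $\mu_k$ and $\nu_k$ together with the uniform bound $|\langle Ru,u\rangle|\leqslant|\rho_1|=\|R\|$ from Rayleigh's principle. The only minor stylistic difference is that the paper keeps the infimum over all $(k-1)$-dimensional subspaces on both sides of the inequality $\sup_{u\in E^\perp}\langle Mu,u\rangle\leqslant\sup_{u\in E^\perp}\langle Nu,u\rangle+|\rho_1|$ and then passes to the infimum simultaneously, whereas you plug in the specific eigen-subspace $E^\ast$ of $N$ and invoke Rayleigh's principle to identify the resulting supremum as $\nu_k$; both routes are equally valid and yield the same bound.
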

\begin{proof}
From Lemma $\ref{lem:minimax operator}$, we have:
\[\mu_n=\inf_{E_{n-1}}\sup_{x\in E_{n-1}^\perp,\|x\|=1}\langle Mx,x\rangle;\qquad\nu_n=\inf_{E_{n-1}}\sup_{x\in E_{n-1}^\perp,\|x\|=1}\langle Nx,x\rangle,\]
where $E_{n-1}$ with dimension $n-1$ is a closed linear subspace of $\wt{\mc H}$.
By Lemma $\ref{lem:Rayleigh operator}$, we have:
$$
\sup_{\|u\|=1}|\langle Ru,u\rangle|=|\rho_1|.
$$
Since $\langle Mu,u\rangle=\langle Nu,u\rangle+\langle Ru,u\rangle$, for any $\|u\|=1$, we have:
$$
\langle Nu,u\rangle-|\rho_1|\leqslant\langle Mu,u\rangle \leqslant \langle Nu,u\rangle+|\rho_1|.
$$
Then for any given $n-1$ dimensional closed linear subspace of $\wt{\mc H}$, we conclude
\begin{equation}\label{eq:max ineq}
\sup_{u\in E_{n-1}^\perp,\|u\|=1}\langle Nu,u\rangle-|\rho_1|\leqslant\sup_{u\in E_{n-1}^\perp,\|u\|=1}\langle Mu,u\rangle\leqslant \sup_{u\in E_{n-1}^\perp,\|u\|=1}\langle Nu,u\rangle+|\rho_1|.
\end{equation}
Take the infimum with respective to $E_{n-1}$ in \eqref{eq:max ineq}, we have
\[\nu_n-|\rho_1|\leqslant\mu_n\leqslant \nu_n+|\rho_1|\]
by Lemma $\ref{lem:minimax operator}$.
\end{proof}
The next result is a direct corollary of Proposition $\ref{prop: wely operator}$.
\begin{corollary}\label{coro:wely ineq operator}
Let $M$ and $N$ be two self-adjoint, positive semi-definite and compact operators defined on a Hilbert space $\wt{\mc H}$ with their respective eigenvalues $\{\mu_i\},\{\nu_i\}$ ordered as follows
\begin{align*}
M:\mu_1\geqslant\dots\geqslant \mu_n\geqslant\dots\geqslant 0\quad\text{and}\quad
N:\nu_1\geqslant\dots\geqslant \nu_n\geqslant\dots\geqslant 0.
\end{align*}
Then the following inequalities hold: $|\mu_k-\nu_k|\leqslant\|M-N\| $, $ k\geqslant1$.
\end{corollary}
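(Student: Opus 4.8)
The plan is to derive the corollary directly from Proposition \ref{prop: wely operator} by the substitution $R := M - N$, so the argument is almost entirely a matter of checking that the hypotheses of that proposition are met. First I would observe that since $M$ and $N$ are both self-adjoint, their difference $R = M - N$ is self-adjoint as well, because the adjoint operation is (conjugate-)linear and $M^* = M$, $N^* = N$. Likewise $R$ is compact, since the compact operators on $\wt{\mc H}$ form a closed linear subspace of the bounded operators and are therefore closed under subtraction. By construction $M = N + R$, and $M$, $N$ are positive semi-definite by hypothesis, so all the structural requirements of Proposition \ref{prop: wely operator} are satisfied.

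Next I would apply the spectral theorem (Lemma \ref{thm: Spectral theorem}) to the self-adjoint compact operator $R$ to obtain its real eigenvalues, ordered by magnitude as $|\rho_1| \geqslant |\rho_2| \geqslant \cdots \geqslant 0$. Proposition \ref{prop: wely operator} then yields immediately that $|\mu_k - \nu_k| \leqslant |\rho_1|$ for every $k \geqslant 1$, and the same proposition already records the identity $|\rho_1| = \|R\|$. Substituting back $R = M - N$ gives $|\mu_k - \nu_k| \leqslant \|M - N\|$ for all $k \geqslant 1$, which is exactly the assertion of the corollary.

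There is essentially no obstacle in this argument: the only point deserving a word of justification is that $R = M - N$ genuinely inherits self-adjointness and compactness from $M$ and $N$, both of which are preserved under subtraction. In particular, the fact that the operator norm of a self-adjoint compact operator equals the largest absolute eigenvalue is \emph{not} needed as a separate step, since it is already packaged into the conclusion $|\rho_1| = \|R\|$ of Proposition \ref{prop: wely operator} (which itself rests on Rayleigh's principle, Lemma \ref{lem:Rayleigh operator}). Thus the corollary is a one-line consequence once the substitution is made.
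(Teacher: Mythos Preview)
Your proposal is correct and matches the paper's own treatment: the paper simply states that Corollary~\ref{coro:wely ineq operator} is a direct consequence of Proposition~\ref{prop: wely operator}, and your substitution $R := M - N$ together with the routine verification that $R$ is self-adjoint and compact is exactly the intended argument.
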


\section{Proof of Proposition \ref{prop:bound hatMmd Mm}}
Before proving Proposition $\ref{prop:bound hatMmd Mm}$, we give the following conclusion, whose proof is deferred to the end of this section.
\begin{proposition}\label{proposition, concentration of MDDO}
Under Assumptions $\ref{as:joint distribution assumption}$ and $\ref{assumption: sub-Gaussian}$, for all $\gamma\in(0,1/2)$, there exist positive constants $D_0=D_0(\gamma,\sigma_0,\sigma_1)$, $D_1=D_1(\sigma_1)$, $D_2=D_2(\sigma_0,\sigma_1)$ and $n_0=n_0(\gamma,\sigma_0,\sigma_1)$ such that for all $n\geqslant n_0$ and
\[C\in \l D_0n^{\frac{2\gamma}{5}}-\ln\l D_1m^2n \r,D_2 n^{\frac{1}{5}}-\ln\l D_1m^2n \r \rmi,\]
we have
\begin{equation*}
\mathbb{P}\l\left\|\wh M_m- M_m\right\| <\l \frac{C+\ln( D_1m^2n)}{D_2}\r^{\frac52}\frac{12m}{\sqrt n}\r\geqslant 1-\exp(- C).
\end{equation*}
\end{proposition}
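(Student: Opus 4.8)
The plan is to exploit the finite dimensionality of the truncated predictor to turn the operator-norm concentration into a matrix spectral-norm concentration, reduce the latter entrywise, and then control each entry --- a scalar V-statistic with heavy (sub-Weibull) tails --- by a truncation plus Bernstein argument. Write $V_m:=\mathrm{span}\{\phi_1,\dots,\phi_m\}$ and $\xi_{j,r}:=\langle\bs X_j,\phi_r\rangle$. Since $\bs X_i^{(m)}=\Pi_m\bs X_i\in V_m$, both $\wh M_m$ and $M_m$ annihilate $V_m^\perp$ and are self-adjoint; identifying $V_m$ with $\R^m$ via $\{\phi_r\}_{r=1}^m$, they become $m\times m$ symmetric matrices $\wh{\bs M}_m,\bs M_m$ whose operator norm equals the Euclidean spectral norm. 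By Lemma \ref{lemma, equivalence of two def of MDDO}, $(\bs M_m)_{rs}=-\mb E[\xi_r\xi_s'\|\Y-\Y'\|]$, while $(\wh{\bs M}_m)_{rs}=-n^{-2}\sum_{j,k}\xi_{j,r}\xi_{k,s}\|\Y_j-\Y_k\|$; a direct computation gives $\mb E[\wh{\bs M}_m]=(1-1/n)\bs M_m$, so the bias is $O(1/n)$ in spectral norm and will be dominated by the lower end of the admissible range of $C$.

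\textbf{Entrywise reduction.} First I would bound $\|\wh{\bs M}_m-\bs M_m\|\le\|\wh{\bs M}_m-\bs M_m\|_F\le m\max_{1\le r,s\le m}\bigl|(\wh{\bs M}_m-\bs M_m)_{rs}\bigr|$. This is the source of the factor $m$ in the target bound, and it replaces an $\varepsilon$-net argument. Crucially, only single coordinates $\xi_{j,r}$, $\xi_{k,s}$ and $\|\Y_j\|$ enter each entry, so I need only the per-coordinate sub-Gaussian bounds of Assumption \ref{assumption: sub-Gaussian} and never the (unavailable) sub-Gaussianity of an arbitrary linear combination $\langle\bs X,\bs\beta\rangle$. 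The diagonal terms $j=k$ vanish because $\|\Y_j-\Y_j\|=0$, so each entry is a genuine, mean-corrected second-order U-statistic.

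\textbf{Per-entry concentration.} Each summand $\xi_{j,r}\xi_{k,s}\|\Y_j-\Y_k\|$ is a product of three sub-Gaussian factors, hence sub-Weibull. I would fix a truncation level $\tau$ with $\sigma_0\tau^2\propto L$, where $L:=C+\ln(D_1m^2n)$, and let $\mathcal E$ be the event that all $|\xi_{j,r}|$ and $\|\Y_j\|$ ($j\in[n]$, $r\in[m]$) are at most $\tau$; by Assumption \ref{assumption: sub-Gaussian} and a union bound over the $\lesssim nm$ relevant variables, $\mb P(\mathcal E^c)\lesssim nm\,\sigma_1 e^{-\sigma_0\tau^2}$. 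On $\mathcal E$ the kernel is bounded by $\tau^3$ while keeping $O(1)$ variance, so a Bernstein-type inequality for bounded U-statistics (via the Hoeffding decomposition) gives a two-regime per-entry bound: a Gaussian regime of order $\sqrt{L/n}$ and a heavy regime of order $L\tau^3/n\asymp L^{5/2}/n$. The admissible window $C\in\bigl(D_0n^{2\gamma/5}-\ln(D_1m^2n),\,D_2n^{1/5}-\ln(D_1m^2n)\bigr]$ together with $n\ge n_0$ is exactly what keeps $L\in[D_0n^{2\gamma/5},D_2n^{1/5}]$ (hence the truncation mild and the bias negligible) and makes the single expression $(L/D_2)^{5/2}\cdot 12/\sqrt n$ dominate both regimes simultaneously, yielding the per-entry deviation at failure probability $e^{-L}$ and producing the power $5/2$.

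\textbf{Assembly and main obstacle.} Finally I would union-bound over the $m^2$ entries: each fails with probability $e^{-L}=e^{-C}/(D_1m^2n)$, so the entrywise failure is $\le e^{-C}/(D_1n)$, and adding $\mb P(\mathcal E^c)\le\sigma_1 e^{-C}/(D_1m)$ still leaves total probability $\le e^{-C}$; thus the factor $m^2n$ inside the logarithm absorbs both the $m^2$-entry union bound and the $n$-sample truncation. Multiplying the per-entry bound by the Frobenius factor $m$ gives $(L/D_2)^{5/2}\,12m/\sqrt n$, as required. The main obstacle is the per-entry step: because $\wh{\bs M}_m-\bs M_m$ is a U-statistic rather than a sum of independent terms, and because the kernel has only sub-Weibull tails, one must combine the Hoeffding decomposition with a carefully tuned truncation so that the heavy regime produces precisely the power $5/2$ and the crossover between regimes falls inside the stated range of $C$; verifying these crossover inequalities and pinning down $D_0,D_1,D_2,n_0$ is the technical heart, generalizing the large-deviation inequality of \cite{mai2021slicing} to the growing dimension $m=m(n)$.
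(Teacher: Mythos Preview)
Your proposal is correct and follows essentially the same route as the paper: reduce the operator norm to the entrywise maximum via $\|\cdot\|\le m\|\cdot\|_{\max}$, control each entry by a sub-Weibull concentration bound for the second-order V-statistic, union-bound over the $m^2$ entries, and reparametrize $\varepsilon\mapsto C$. The only difference is that the paper invokes \cite[Theorem~1]{mai2021slicing} (restated here as Lemma~\ref{lemma, concentration of MDDOnm}) as a black box for the max-norm bound, whereas you sketch the truncation-plus-Bernstein argument behind that result; your accounting of the $m^2n$ factor inside the logarithm and of the power $5/2$ is exactly what that cited theorem encodes.
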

\paragraph{Proof of Proposition $\ref{prop:bound hatMmd Mm}$}
\begin{proof}

Using Corollary $\ref{coro:wely ineq operator}$, one can get
$
\lambda_i\l\wh M_m\r\leqslant \lno\wh M_m-M_m\rno +\lambda_i\l M_m\r
$. 
Since $\rank(M_m)=d$, one can get $\lambda_i(M_m)=0,~i\geqslant d+1$. Thus by Proposition $\ref{proposition, concentration of MDDO}$, one has
\begin{align}\label{eq:lambdai hat Mm upper bound}
\mathbb{P}\l\lambda_{d+1}(\wh M_m)<\l \frac{C+\ln\l D_1m^2n\r}{D_2}\r^{\frac52}\frac{12m}{\sqrt n}\r\geqslant 1-\exp(- C)\qquad(i\geq d+1). 
\end{align}
Notice that 
\begin{align*}\lno\wh M_m^d- M_m\rno &\leqslant\lno M_m-\wh M_m\rno +\lno\wh M_m-\wh M_m^d\rno ;\\
\lno\wh M_m-\wh M_m^d\rno &=\left\|\sum_{i=d+1}^\infty\wh\mu_i\wh\gamma_i\otimes \wh\gamma_i\right\| =\widehat{\lambda}_{d+1}=\lambda_{d+1}(\widehat{M}_m)
\end{align*}
by \eqref{wh M_m spectral decomposition}.
Then combing Proposition $\ref{proposition, concentration of MDDO}$ with \eqref{eq:lambdai hat Mm upper bound} can complete the proof.
\end{proof}

\paragraph{Proof of Proposition \ref{proposition, concentration of MDDO}}
\begin{proof}
Note that $\boldsymbol{X}^{(m)}=\sum\limits_{j=1}^m\langle \boldsymbol{X},\phi_j\rangle\phi_j$, then a simple calculation leads to
\begin{align*}
M_m&=-\sum_{i,j=1}^m\mathbb E\big[\langle \boldsymbol{X},\phi_i\rangle\langle \boldsymbol{X}',\phi_{j}\rangle\|\Y-\Y'\|\big]\phi_i\otimes\phi_j;\\
\widehat{M}_m&=-\sum_{i,j=1}^m\frac1{n^2}\sum_{k,\ell=1}^n\langle \boldsymbol{X}_k,\phi_i\rangle\langle \boldsymbol{X}_\ell,\phi_j\rangle\|\Y_k-\Y_\ell\|\phi_i\otimes\phi_j.
\end{align*}

For a operator $\Gamma'$ that can be expanded as $\Gamma':=\sum\limits_{i,j=1}^\infty a_{ij}\phi_i\otimes\phi_{j}$, let us define its maximal norm as $\|\Gamma'\|_{\mathrm{max}}=\sup\limits_{i,j}|a_{ij}|$.

\begin{lemma}\cite[Theorem 1]{mai2021slicing}\label{lemma, concentration of MDDOnm}
Under Assumptions $\ref{as:joint distribution assumption}$ and $\ref{assumption: sub-Gaussian}$, for all
$\gamma\in(0,1/2)$, there exist positive
constants $C_0=C_0(\gamma,\sigma_0,\sigma_1)$, $C_1=C_1(\sigma_1)$, $C_2 = C_2(\sigma_0;\sigma_1)$ and $n_0 = n_0(\gamma,\sigma_0,\sigma_1)$
such that for all $n\geqslant n_0$ and $\varepsilon\in(C_0 n^{-(1/2-\gamma)},1]$, we have
\begin{equation*}
\mathbb{P}\l\lno \widehat{M}_m-M_m\rno_{\max}>12\varepsilon\r\leqslant C_1 m^2n\exp\l- C_2\l\varepsilon^2 n\r^{1/5}\r.
\end{equation*}
\end{lemma}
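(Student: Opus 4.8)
The quickest route is to observe that Lemma~\ref{lemma, concentration of MDDOnm} is exactly Theorem~1 of \cite{mai2021slicing} applied to the finite-dimensional score vector. Set $V_j:=\langle\X,\phi_j\rangle$ for $1\le j\le m$ and keep the multivariate response $\Y$. In the orthonormal system $\{\phi_i\otimes\phi_j\}_{1\le i,j\le m}$ the operators $M_m$ and $\widehat M_m$ have, respectively, the entries $-\mathbb E[V_iV_j'\,\|\Y-\Y'\|]$ and $-n^{-2}\sum_{k,\ell}\langle\X_k,\phi_i\rangle\langle\X_\ell,\phi_j\rangle\|\Y_k-\Y_\ell\|$, which are precisely the $(i,j)$ entries of the population and sample MDDM of $(V_1,\dots,V_m)^\top$ on $\Y$; hence $\|\widehat M_m-M_m\|_{\max}$ is the entrywise maximal deviation of the sample MDDM from its population value. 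The uniform sub-Gaussian bound on the coordinates $V_j$ and on $\|\Y\|$ in Assumption~\ref{assumption: sub-Gaussian} is the functional analogue of their Condition~(C1), so the cited inequality applies verbatim, with constants depending only on $\sigma_0,\sigma_1,\gamma$. For completeness I sketch the underlying concentration argument.

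Fix $1\le i,j\le m$ and write $Z_k=(\X_k,\Y_k)$. Under Assumption~\ref{as:joint distribution assumption} the kernel $h_{ij}\big((x,y),(x',y')\big)=\langle x,\phi_i\rangle\langle x',\phi_j\rangle\|y-y'\|$ has finite mean, so the $(i,j)$ coefficient of $\widehat M_m-M_m$ is a well-defined centred V-statistic in this kernel. Since $\|y-y\|=0$, the diagonal $k=\ell$ drops out and the V-statistic coincides with a genuine second-order U-statistic. After symmetrizing $h_{ij}$ and applying the Hoeffding decomposition, its H\'ajek (linear) projection is an average of i.i.d. terms $\langle\X,\phi_i\rangle\,\psi_{j}(\Y)$ with $\psi_j(y)=\mathbb E[\langle\X',\phi_j\rangle\|y-\Y'\|]$; each such term is a product of a sub-Gaussian and a sub-Gaussian-in-$y$ factor, hence sub-exponential, and concentrates at the fast Bernstein rate $\exp(-cn\varepsilon^2)$. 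The bottleneck is the degenerate quadratic part.

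For the degenerate part the kernel is a product of three sub-Gaussian factors. A $p$-th moment bound of order $(Cp)^{3p/2}$ shows $\mathbb P(|h_{ij}|>t)\lesssim\exp(-ct^{2/3})$, so the kernel is unbounded and a direct Hoeffding bound is unavailable. I would truncate each factor at a common level $\tau$, apply a decoupling/Bernstein inequality for the resulting bounded degenerate U-statistic (whose range is $\asymp\tau^{3/2}$ and whose variance is $O(1)$), and control the discarded tail by $nm\,\exp(-c\tau)$ via Assumption~\ref{assumption: sub-Gaussian}. Optimizing $\tau$ so as to equate the truncation remainder with the bounded-kernel fluctuation produces the stretched-exponential rate $\exp\big(-c(\varepsilon^2n)^{1/5}\big)$; the constraint $\varepsilon\in(C_0n^{-(1/2-\gamma)},1]$ ensures we remain in the range-dominated regime where this balance is meaningful.

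Finally, a union bound over the $m^2$ index pairs, combined with the sample-level union bound entering the truncation step, absorbs into the polynomial prefactor $C_1m^2n$ and yields the claimed tail for $\|\widehat M_m-M_m\|_{\max}$. The one genuinely delicate ingredient is the concentration of the degenerate U-statistic with an unbounded (sub-Weibull, index $2/3$) kernel: choosing a truncation level that simultaneously tames the bounded-kernel chaos fluctuation and the heavy-tailed remainder is where the nonstandard exponent $1/5$ originates, and this is the step I expect to be the main obstacle.
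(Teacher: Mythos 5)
Your proposal is correct and matches the paper's treatment exactly: the paper states this lemma as a direct citation of Theorem~1 of \cite{mai2021slicing}, applied verbatim to the coordinate vector $(\langle\X,\phi_1\rangle,\dots,\langle\X,\phi_m\rangle)^\top$ whose population and sample MDDM entries coincide with those of $M_m$ and $\widehat M_m$ in the basis $\{\phi_i\otimes\phi_j\}$, with Assumption~\ref{assumption: sub-Gaussian} playing the role of their Condition~(C1). Your additional sketch of the underlying truncated-U-statistic argument (the source of the exponent $1/5$) is consistent with the cited proof but is not reproduced in the paper.
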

\noindent Since $\lno\widehat{M}_m-M_m\rno \leqslant m\lno\widehat{M}_m-M_m\rno_{\mathrm{max}}$, one has
\begin{equation*}
\mathbb{P}\l\lno\widehat{M}_m-M_m\rno >12m\varepsilon\r\leqslant C_1 m^2n\exp\l-C_2\l\varepsilon^2 n\r^{1/5}\r.
\end{equation*}
Let $C=C_2\l\ve^2n\r^{1/5}-\ln\l C_1m^2n\r$ satisfying 
\begin{align*}
C\in\l C_2C_0^{2/5}n^{2\gamma/5}-\ln\l C_1m^2n\r,C_2n^{1/5}-\ln\l C_1m^2n\r\rmi,
\end{align*}
then one has
\begin{equation*}
\mathbb{P}\l\lno\widehat{M}_m-M_m\rno \leqslant\l \frac{C+\ln\l C_1m^2n\r}{C_2}\r^{\frac52}\frac{12m}{\sqrt{n}}\r>1- \exp(- C).
\end{equation*}
Then in order to complete the proof, one only need to choose $D_0$, $D_1$ and $D_2$ to be $C_2C_0^{2/5}$, $C_1$ and $C_2$ respectively. 
\end{proof}

\section{Properties of Sub-Gaussian Random Vectors}
We first review the definition of sub-Gaussian random variables.
\begin{definition}[Sub-Gaussian random variable and its upper-exponentially bounded constant]\label{def:sub gaussian variable}
A random variable $X$ is called a sub-Gaussian random variable if $X$ satisfies one of the following equivalent properties:
\begin{itemize}
 \item[1).] Tails. $\P(|X|>t)\leqslant \exp(1-t^{2}/K^{2}_{1})$ for any $t>0$;
 \item[2).] Moments. $\E[|X|^{p}]^{1/p}\leqslant K_{2}\sqrt{p}$ for any $p\geqslant 1$;
 \item[3).]Super-exponential moment: $\E[\exp(X^{2}/K^{2}_{3})]\leqslant \mr{e}$.

\noindent Moreover, if $\E[X]=0$, then the properties $1)-3)$ are also equivalent to the following one:
\item[4).] Moment generating function: $\E[\exp(tX)]\leqslant \exp(t^{2}K^{2}_{4})$ for all $t\in\R$.
\end{itemize}
Here $K_1$, $K_2$, $K_3$ and $K_4$ are four constants.
$K$ is called an upper-exponentially bounded constant of $X$ if 
$K\geqslant \max\{K_{1},K_{2},K_{3},K_{4}\}$.
\end{definition}
\begin{definition}[Sub-Gaussian random vector and its upper-exponentially bounded constant]\label{def,sub-Gaussian random vector,upper-exponentially bounded constant}
 ${X}\in\R^m$ is called a sub-Gaussian random vector if for all $x\in\R^m$, one-dimensional marginal $\langle{X},x\rangle$ is sub-Gaussian random variable. $K$ is called an upper-exponentially bounded constant of $X$ if $K$ satisfies:
 \begin{align*}
K\geqslant \sup_{x\in\mb{S}^{m-1}}K(\langle X,x\rangle) 
 \end{align*}
 where $K(\langle X,x\rangle)$ denotes an upper-exponentially bounded constant of $\langle X,x\rangle$.
Moreover, $K$ is called a uniform (about $m$) upper-exponentially bounded constant of $X$ if $K$ satisfies:
 \begin{align*}
K\geqslant \sup_m\sup_{x\in\mb{S}^{m-1}}K\l \langle X,x\rangle\r.
 \end{align*}
Furthermore, $X$ is called a uniform (about $m$) sun-Gaussian random vector.
 \end{definition}
The following is an application of sub-Gaussian random vectors.
\begin{lemma}[\citealt{vershynin2010introduction}]\label{lem:esgrm}
 Let $\M=[\bs m_1~\cdots~\bs m_n]$ be an $m\times n$ matrix ($n>m$) whose columns $\m_{i}$ are 
 independent centered sub-Gaussian random vectors with 
 covariance matrix $\mathbf{I}_{m}$. Let $\sigma^{+}_{\min}(\M)$ and $\sigma_{\max}(\M)$ be the infimum and supremum of positive singular values of $\M$ respectively. Then, for any $t>0$, with probability at least $1-2\exp(- C^{\prime}t^{2})$, we have
 \begin{equation*}
 \sqrt{n}-C_0\sqrt{m}-t\leqslant \sigma^{+}_{\min}(\M)\leqslant \sigma_{\max}(\M)\leqslant \sqrt{n}+C_0\sqrt{m}+t
 \end{equation*}
 where $C'$ and $C_0$ are two positive constants depending only on $K(\bs m_1)$:
 the upper-exponentially bounded constant of $\bs m_1$.
\end{lemma}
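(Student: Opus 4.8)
The plan is to reduce to the standard setting of a tall matrix with independent isotropic sub-Gaussian rows and then run the classical $\varepsilon$-net argument. First I would pass to the transpose. Since $\M$ is $m\times n$ with $n>m$ and its columns $\bs m_1,\dots,\bs m_n$ are independent centered sub-Gaussian vectors in $\R^m$ with covariance $\mathbf I_m$, the matrix $A:=\M^\top$ is an $n\times m$ (tall) matrix whose \emph{rows} are exactly these independent isotropic sub-Gaussian vectors. Because $\M$ and $A=\M^\top$ share the same set of nonzero singular values, it suffices to establish the two-sided bound for $A$, where now $n$ plays the role of the number of rows and $m$ the ambient dimension.

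Next I would recast the extreme-singular-value bound as a deviation bound for the Gram matrix. It is enough to show that, with the stated probability,
\[
\Bigl\|\tfrac1n A^\top A-\mathbf I_m\Bigr\|\le \max(\delta,\delta^2),\qquad \delta:=C_0\sqrt{m/n}+t/\sqrt n,
\]
because $\|\tfrac1n A^\top A-\mathbf I_m\|\le\max(\delta,\delta^2)$ places all eigenvalues of $\tfrac1n A^\top A$ in $[1-\delta,1+\delta]$ and hence forces $1-\delta\le \sigma^+_{\min}(A)/\sqrt n\le \sigma_{\max}(A)/\sqrt n\le 1+\delta$; multiplying through by $\sqrt n$ and renaming $\sqrt n\,t$ as $t$ yields precisely the claimed inequality.

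I would then estimate the operator norm by discretization. Fix a $\tfrac14$-net $\mathcal N$ of the unit sphere $\mb S^{m-1}$ with $|\mathcal N|\le 9^m$; a standard net-comparison lemma gives $\|\tfrac1n A^\top A-\mathbf I_m\|\le 2\max_{x\in\mathcal N}\bigl|\tfrac1n\|Ax\|^2-1\bigr|$. For a \emph{fixed} $x\in\mb S^{m-1}$ one has
\[
\tfrac1n\|Ax\|^2-1=\tfrac1n\sum_{i=1}^n\bigl(\langle \bs m_i,x\rangle^2-1\bigr),
\]
and isotropy gives $\E\langle \bs m_i,x\rangle^2=1$. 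Each $\langle \bs m_i,x\rangle$ is sub-Gaussian with parameter controlled by $K(\bs m_1)$, so $\langle \bs m_i,x\rangle^2-1$ is a centered sub-exponential variable, and Bernstein's inequality yields $\P\bigl(\bigl|\tfrac1n\sum_i(\langle \bs m_i,x\rangle^2-1)\bigr|\ge \varepsilon/2\bigr)\le 2\exp\bigl(-c\,n\min(\varepsilon^2,\varepsilon)\bigr)$, with $c$ depending only on $K(\bs m_1)$.

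Finally I would union-bound over $\mathcal N$ and calibrate the constants. Taking $\varepsilon=\delta$ and summing over the net bounds the failure probability by $9^m\cdot 2\exp\bigl(-c\,n\min(\delta^2,\delta)\bigr)$. The main obstacle, and essentially the only delicate point, is this calibration: one must choose $C_0$ so that the entropy factor $9^m=\exp(m\ln 9)$ is dominated by the exponent $c\,n\min(\delta^2,\delta)$. Since $\delta=C_0\sqrt{m/n}+t/\sqrt n$ gives $n\min(\delta^2,\delta)\gtrsim C_0^2 m+t^2$, taking $C_0$ large enough absorbs the $m\ln 9$ term and leaves a bound of the form $2\exp(-C' t^2)$, which is exactly the asserted probability. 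Combining this with the Gram-matrix-to-singular-value reduction of the second step completes the proof.
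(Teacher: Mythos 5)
The paper itself offers no proof of this lemma: it is imported as a black box from the cited reference \citep{vershynin2010introduction} (it is Theorem 5.39 there, stated for the transpose, together with Lemmas 5.36 and 5.2--5.4), and is used downstream only through the consequence \eqref{equation, min max eval}. So there is no in-paper argument to match; what you have done is reconstruct the standard proof from the cited source, and structurally it is the right one: the transpose reduction to a tall matrix $A=\M^\top$ with independent isotropic sub-Gaussian \emph{rows}, the reduction of the two-sided singular-value bound to $\lno\tfrac1n A^\top A-\mathbf I_m\rno\leqslant\max(\delta,\delta^2)$, the $\tfrac14$-net of cardinality at most $9^m$, Bernstein's inequality for the centered sub-exponential variables $\langle\bs m_i,x\rangle^2-1$, and the union bound with calibration of $C_0$ against the entropy term $m\ln 9$. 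The transpose step is legitimate since $\sigma^+_{\min}(\M)\geqslant\sigma_{\min}(\M^\top)$, so bounding the smallest of all $m$ singular values of $A$ suffices.

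There is, however, one genuine calibration slip in your last step. You apply Bernstein at level $\varepsilon=\delta$ and assert $n\min(\delta^2,\delta)\gtrsim C_0^2m+t^2$; this is false in the regime $\delta>1$, i.e.\ for large $t$, where $n\min(\delta^2,\delta)=n\delta=\sqrt{n}\,(C_0\sqrt m+t)$, which is of order $\sqrt n\, t\ll t^2$ once $t\gg\sqrt n$. Since the lemma claims the probability bound $1-2\exp(-C't^2)$ for \emph{every} $t>0$, your argument as written does not deliver the stated failure probability in that regime. The fix is already latent in your second step: apply Bernstein at deviation level $\varepsilon=\max(\delta,\delta^2)$ (the actual target of the net reduction) and observe the identity $\min(\varepsilon^2,\varepsilon)=\delta^2$, valid whether $\delta\leqslant1$ or $\delta\geqslant1$; the exponent is then $c\,n\delta^2\geqslant c\,(C_0^2m+t^2)$, which absorbs $m\ln 9$ once $cC_0^2\geqslant\ln 9$ and leaves exactly $2\exp(-C't^2)$ for all $t$. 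Two cosmetic remarks: with your own definition $\delta=C_0\sqrt{m/n}+t/\sqrt n$, multiplying $1\pm\delta$ by $\sqrt n$ already produces $\sqrt n\pm(C_0\sqrt m+t)$, so no renaming of $\sqrt n\,t$ is needed; and the Gram-matrix bound places the eigenvalues of $\tfrac1n A^\top A$ in $[1-\max(\delta,\delta^2),\,1+\max(\delta,\delta^2)]$, from which the singular values of $A/\sqrt n$ land in $[1-\delta,1+\delta]$ by the elementary factorization $|s^2-1|=|s-1|(s+1)$, not directly as stated.
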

\noindent Let $t=\sqrt m$, then one can easily get
\begin{align}\label{equation, min max eval}
\begin{split}
\lambda_{\max}\left(\frac1n \M\M^\top\right)\leqslant \left(1+\frac{(C_0+1)\sqrt m}{\sqrt n}\right)^2;\\
\lambda_{\min}^+\left(\frac1n \M\M^\top\right)\geqslant \left(1-\frac{(C_0+1)\sqrt m}{\sqrt n}\right)^2, 
\end{split}
\end{align}
with probability at least $1-2\exp(- C'm)$ where $\lambda^{+}_{\min}(\cdot)$ and $\lambda_{\max}(\cdot)$ stands for the infimum and supremum of the positive spectrum respectively.

\begin{lemma}\label{lemma, estiamtion error of inverse sample cov}
Assume that $\x_1,\x_2,...,\x_n$ are $n$ i.i.d. samples from an $m$-dimensional centered sub-Gaussian vector with an invertible covariance matrix $\Sigma$. Let $\wh\Sigma:=\frac1n\sum_i \x_i\x_i^\top$.
Then there exists a positive constant $n_1'=n_1'(K(\bs m_1),c_1)$ ($c_1$ is defined in \eqref{eq: m n relationship}), such that when $n\geqslant n_1'$, we have
\begin{align*}
\lno\wh{\Sigma}-\Sigma\rno\hspace{-1.5mm}&\leqslant (C_0+2)^2\lambda_{\max}(\Sigma)\sqrt{\frac mn}~~\text{and}~~ \lno\wh{\Sigma}^{-1}-\Sigma^{-1}\rno\hspace{-1.5mm}\leqslant \frac{4(C_0+2)^2}{\lambda_{\min}(\Sigma)}\sqrt{\frac mn},
 \end{align*}
 with probability at least $1-2\exp(- C'm)$, where $C_0$ is defined in Lemma $\ref{lem:esgrm}$.
\end{lemma}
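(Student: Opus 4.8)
\section*{Proof proposal}

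The plan is to reduce both inequalities to the already-established non-asymptotic spectrum bounds \eqref{equation, min max eval} by whitening the data. Set $\m_i:=\Sigma^{-1/2}\x_i$ for $i=1,\dots,n$; these are i.i.d.\ centered sub-Gaussian vectors with covariance $\Sigma^{-1/2}\Sigma\Sigma^{-1/2}=\mathbf I_m$, and $K(\m_1)$ is exactly the constant governing $C_0,C'$ in Lemma~\ref{lem:esgrm}. Assembling $\M=[\m_1~\cdots~\m_n]$, the crucial observation is the factorisation
\begin{align*}
A:=\frac1n\M\M^\top=\Sigma^{-1/2}\Big(\frac1n\sum_{i=1}^n\x_i\x_i^\top\Big)\Sigma^{-1/2}=\Sigma^{-1/2}\wh\Sigma\,\Sigma^{-1/2}.
\end{align*}
Since $m=n^{c_1}$ with $c_1\in(0,1)$ by \eqref{eq: m n relationship}, we have $n>m$ and $\sqrt{m/n}=n^{(c_1-1)/2}\to0$, so past a threshold $n_1'$ (depending only on $c_1$ and, through $C_0$, on $K(\m_1)$) the quantity $\delta:=(C_0+1)\sqrt{m/n}$ satisfies $\delta\leqslant 1/2$. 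Applying \eqref{equation, min max eval} with $t=\sqrt m$ then yields, on an event of probability at least $1-2\exp(-C'm)$,
\begin{align*}
(1-\delta)^2\leqslant \lambda_{\min}(A)\leqslant\lambda_{\max}(A)\leqslant(1+\delta)^2;
\end{align*}
in particular $\lambda_{\min}(A)\geqslant(1-\delta)^2>0$, so $A$ (hence $\wh\Sigma$) is invertible on this event and $\lambda_{\min}^+$ may be read as $\lambda_{\min}$.

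For the first bound I would transfer back through the factorisation via $\wh\Sigma-\Sigma=\Sigma^{1/2}(A-\mathbf I_m)\Sigma^{1/2}$, giving $\lno\wh\Sigma-\Sigma\rno\leqslant\lambda_{\max}(\Sigma)\lno A-\mathbf I_m\rno$. Because $A-\mathbf I_m$ is symmetric, $\lno A-\mathbf I_m\rno=\max\{|\lambda_{\max}(A)-1|,|\lambda_{\min}(A)-1|\}\leqslant 2\delta+\delta^2$. Using $\sqrt{m/n}\leqslant 1$ to replace the factor $m/n$ coming from $\delta^2$ by $\sqrt{m/n}$, a short computation gives $2\delta+\delta^2\leqslant(C_0+1)(C_0+3)\sqrt{m/n}\leqslant(C_0+2)^2\sqrt{m/n}$, which is the claimed inequality for $\lno\wh\Sigma-\Sigma\rno$.

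For the second bound the same factorisation gives $\wh\Sigma^{-1}-\Sigma^{-1}=\Sigma^{-1/2}(A^{-1}-\mathbf I_m)\Sigma^{-1/2}$, hence $\lno\wh\Sigma^{-1}-\Sigma^{-1}\rno\leqslant\lambda_{\min}(\Sigma)^{-1}\lno A^{-1}-\mathbf I_m\rno$. Writing $A^{-1}-\mathbf I_m=A^{-1}(\mathbf I_m-A)$ and using $\lno A^{-1}\rno=\lambda_{\min}(A)^{-1}\leqslant(1-\delta)^{-2}\leqslant 4$ (valid precisely because $\delta\leqslant1/2$), I obtain $\lno A^{-1}-\mathbf I_m\rno\leqslant 4\lno A-\mathbf I_m\rno\leqslant 4(C_0+2)^2\sqrt{m/n}$, which yields the stated bound on $\lno\wh\Sigma^{-1}-\Sigma^{-1}\rno$. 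Both inequalities hold on the same event, so the probability $1-2\exp(-C'm)$ is inherited directly from \eqref{equation, min max eval}.

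The argument is essentially bookkeeping once the whitening reduction is in place; the only points demanding care are (i) confirming that on the high-probability event $A$ is genuinely invertible, so that $\lambda_{\min}^+(A)=\lambda_{\min}(A)$ and $\wh\Sigma^{-1}$ exists, and (ii) tracking the constants so that the two slightly different factors $(C_0+2)^2$ and $4(C_0+2)^2$ come out exactly, which is where the threshold $\delta\leqslant1/2$ (and thus $n_1'$) is needed.
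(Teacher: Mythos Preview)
Your proof is correct and follows essentially the same route as the paper: whiten via $\m_i=\Sigma^{-1/2}\x_i$, factor $\wh\Sigma-\Sigma=\Sigma^{1/2}(A-\mathbf I_m)\Sigma^{1/2}$ and $\wh\Sigma^{-1}-\Sigma^{-1}=\Sigma^{-1/2}A^{-1}(\mathbf I_m-A)\Sigma^{-1/2}$, then invoke \eqref{equation, min max eval} together with the threshold $(C_0+1)\sqrt{m/n}\leqslant 1/2$ (the paper makes this explicit as $n_1'=[2(C_0+1)]^{2/(1-c_1)}$). The only cosmetic difference is that you bound $\lno A-\mathbf I_m\rno$ by $\max\{|\lambda_{\max}(A)-1|,|1-\lambda_{\min}(A)|\}$ while the paper writes $\lambda_{\max}(A)-1$ directly; both lead to the same constant $(C_0+2)^2$.
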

\begin{proof}
Let $\x_i=\Sigma^{\frac12}\m_i$ and $\bs{M}=[\bs m_1~\cdots~\bs m_n]$ where $\m_i$ is a centered sub-Gaussian random vector with covariance $\mathbf I_m$. Then one has 
\begin{align*}
\lno\wh\Sigma-\Sigma\rno&\leqslant\lno\Sigma^{\frac12}\rno\cdot\left\|\frac1n \M\M^\top-\mathbf I\right\|\cdot\lno\Sigma^{\frac12}\rno\\
&= \lambda_{\max}(\Sigma)\cdot\left[\lambda_{\max}\left(\frac1n \M\M^\top\right)-1\right]
\end{align*}
and 
\begin{align*}
\lno\wh{\Sigma}^{- 1}-\Sigma^{- 1}\rno
&\leqslant \lno\Sigma^{-\frac12}\rno\cdot\left\|\frac1n \M\M^\top-\mathbf I\right\|\cdot\lno\l\frac1n \M\M^\top\r^{-1}\rno\cdot\lno\Sigma^{-\frac12}\rno\\
&=\frac{1}{\lambda_{\min}(\Sigma)}\left[\lambda_{\max}\left(\frac1n \M\M^\top\right)-1\right]\cdot\lambda_{\min}\left(\frac1n \M\M^\top\right)^{-1}.
\end{align*}
By \eqref{equation, min max eval}, it is easy to check that
\begin{align*}&\lambda_{\max}\left(\frac1n \M\M^\top\right)-1\leqslant\left(1+\frac{(C_0+1)\sqrt m}{\sqrt n}\right)^2-1\leqslant\frac{(C_0+2)^2\sqrt m}{\sqrt n};\\
&\lambda_{\min}\left(\frac1n \M\M^\top\right)\geqslant \left(1-\frac{(C_0+1)\sqrt m}{\sqrt n}\right)^2\geqslant \frac14~\text{for}~n\geqslant [2(C_0+1)]^{\frac2{1-c_1}},
\end{align*}
with probability at least $1-2\exp(- C'm)$. Thus the proof is completed by choosing $n_1'(C_0,c_1):=[2(C_0+1)]^{\frac{2}{1-c_1}}$. 
\end{proof}

\section{Proof of Proposition \ref{prop:concentration Gammam dag Mmd}}\label{ap:concentration inequality}
We first give the following lemma whose proof is deferred to the end of this section.
\begin{lemma}\label{lem:PimTPimtoT}If $T$ is of finite rank, then we have $\lim\limits_{m\to \infty}\|\Pi_m T\Pi_m-T\| =0$.
\end{lemma}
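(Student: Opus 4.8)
The plan is to reduce everything to rank-one operators and then exploit the pointwise (strong) convergence $\Pi_m x \to x$ for every $x \in \mc H$. This strong convergence is available because $\{\phi_i\}_{i\geqslant 1}$ is a complete orthonormal basis of $\mc H$ (as noted after \eqref{eq:X expansion}); concretely $\norm{\Pi_m x - x}^2 = \sum_{i>m}\abs{\ang{\phi_i,x}}^2 \to 0$. As a first step I would use finite-rankness to write $T = \sum_{k=1}^r a_k \otimes b_k$ for some $r<\infty$ and $a_k,b_k\in\mc H$, so that the whole problem decomposes over finitely many summands.

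Next, since $\Pi_m$ is self-adjoint, I would record the identity $\Pi_m(a\otimes b)\Pi_m = (\Pi_m a)\otimes(\Pi_m b)$, which follows from $[\Pi_m(a\otimes b)\Pi_m](z)=\ang{a,\Pi_m z}\,\Pi_m b=\ang{\Pi_m a,z}\,\Pi_m b$ for all $z$. Hence $\Pi_m T\Pi_m - T = \sum_{k=1}^r\bk{(\Pi_m a_k)\otimes(\Pi_m b_k) - a_k\otimes b_k}$. For each summand I would insert a telescoping middle term and invoke the triangle inequality together with the elementary identity $\norm{x\otimes y}=\norm{x}\,\norm{y}$ (an immediate consequence of the definition of the tensor product and Cauchy--Schwarz), obtaining
\begin{align*}
\norm{(\Pi_m a_k)\otimes(\Pi_m b_k)-a_k\otimes b_k}
&\leqslant \norm{(\Pi_m a_k - a_k)\otimes \Pi_m b_k} + \norm{a_k\otimes(\Pi_m b_k - b_k)}\\
&\leqslant \norm{\Pi_m a_k - a_k}\,\norm{b_k} + \norm{a_k}\,\norm{\Pi_m b_k - b_k},
\end{align*}
where I used $\norm{\Pi_m b_k}\leqslant\norm{b_k}$. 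Summing over the finitely many $k$ and letting $m\to\infty$, each term on the right vanishes by the strong convergence above, which yields $\norm{\Pi_m T\Pi_m - T}\to 0$.

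The one point worth flagging — more a conceptual caution than a genuine obstacle — is that this cannot be proved by asserting $\Pi_m\to I$ in operator norm, since that is false; strong convergence of $\Pi_m$ does not upgrade to norm convergence for a general bounded operator. Finite rank (compactness of $T$ would in fact already suffice) is precisely what lets the error be controlled by the finitely many scalar convergences $\norm{\Pi_m a_k - a_k}\to 0$ and $\norm{\Pi_m b_k - b_k}\to 0$, uniformly over the unit ball of $\mc H$. Everything else is routine.
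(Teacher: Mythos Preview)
Your proof is correct and follows a genuinely different (and arguably cleaner) route than the paper's. The paper telescopes at the operator level,
\[
\|\Pi_m T\Pi_m - T\| \leqslant \|(\Pi_m-I)T^*\| + \|(\Pi_m-I)T\|,
\]
and then bounds each piece by fixing an orthonormal basis $\{e_i\}_{i=1}^k$ of $\mathrm{Im}(T)$ (respectively $\mathrm{Im}(T^*)$) and using $\|(I-\Pi_m)T\bs\beta\|\leqslant\sum_i|b_i|\,\|(I-\Pi_m)e_i\|$ together with the pointwise convergence $\|(I-\Pi_m)e_i\|\to 0$. You instead decompose $T$ itself as a finite sum of rank-one operators and exploit the identity $\Pi_m(a\otimes b)\Pi_m=(\Pi_m a)\otimes(\Pi_m b)$, which reduces the whole question directly to the elementary estimate $\|x\otimes y\|=\|x\|\,\|y\|$ and finitely many strong convergences. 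Both arguments hinge on the same two ingredients --- strong convergence of $\Pi_m$ and finite rank of $T$ --- but your rank-one decomposition handles the two ``sides'' of $\Pi_m T\Pi_m$ in one stroke, avoiding the separate treatment of $T$ and $T^*$. Your closing remark that compactness of $T$ would already suffice is also correct and is a mild strengthening of what the paper states.
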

A direct corollary of this lemma is as follows.
\begin{corollary}\label{lemma, M go to Mm}
Under Assumptions $\ref{as:joint distribution assumption}$ and $\ref{as:Linearity condition and Coverage condition}$, we have $\lim\limits_{m\to\infty}\|M-M_m\| =0$.
\end{corollary}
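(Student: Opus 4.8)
The plan is to recognize this corollary as an immediate consequence of the preceding Lemma \ref{lem:PimTPimtoT}, once I rewrite $M_m$ in the form $\Pi_m M \Pi_m$ and verify that $M$ has finite rank. So the whole argument reduces to assembling three facts already available in the excerpt.

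First, I would exploit the operator-transformation property of MDDO in Theorem \ref{theorem, MDDO and conditional mean independence}(ii). Since $\bs X^{(m)} = \Pi_m \bs X$ and the truncation projection $\Pi_m = \sum_{i=1}^m \phi_i \otimes \phi_i$ is self-adjoint (so that $\Pi_m^* = \Pi_m$), applying property (ii) with $T = \Pi_m$ gives
\[
M_m = \mathrm{MDDO}(\Pi_m \bs X \mid \Y) = \Pi_m^* \, \mathrm{MDDO}(\bs X \mid \Y)\, \Pi_m = \Pi_m M \Pi_m .
\]
This identity (already used implicitly in the proof of Lemma \ref{lemma, way of estimate truncate central subspace}) turns the claim into $\norm{M - \Pi_m M \Pi_m} \to 0$.

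Second, I would check that $M$ is of finite rank, which is exactly the hypothesis needed to invoke Lemma \ref{lem:PimTPimtoT}. By Theorem \ref{theorem, MDDO and IRS} one has $\mathrm{Im}(M) = \mathcal S_{\mathbb E(\boldsymbol{X}|\Y)}$, and by Lemma \ref{lemma: SE=GammaS} together with the coverage condition in Assumption \ref{as:Linearity condition and Coverage condition}, this image has dimension exactly $d$. Hence $\rank(M) = d < \infty$. With $T = M$ of finite rank, Lemma \ref{lem:PimTPimtoT} yields $\lim_{m\to\infty}\norm{\Pi_m M \Pi_m - M} = 0$, which is precisely the assertion $\lim_{m\to\infty}\norm{M - M_m} = 0$.

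Since each step merely cites a previously established result, I do not expect any genuine obstacle here; the corollary is a short bookkeeping argument. The only points that require care are (a) correctly identifying $T = \Pi_m$ as self-adjoint when invoking property (ii), so that the conjugation reads $\Pi_m M \Pi_m$ rather than a one-sided composition, and (b) confirming the finite-rank property of $M$ through the coverage condition, since Lemma \ref{lem:PimTPimtoT} genuinely fails for general compact $T$ (operator-norm convergence of $\Pi_m T \Pi_m$ to $T$ would not hold). Both are settled by the assumptions in force.
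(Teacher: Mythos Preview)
Your proposal is correct and matches the paper's approach exactly: the paper presents this corollary as a direct consequence of Lemma~\ref{lem:PimTPimtoT}, relying on the identity $M_m=\Pi_m M\Pi_m$ (from Theorem~\ref{theorem, MDDO and conditional mean independence}(ii) with the self-adjoint $\Pi_m$) and the finite-rank property of $M$ (from Corollary~\ref{corollary, MDDO and central subspace}). Your write-up simply makes these two ingredients explicit.
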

\noindent We denote by $m_M(\varepsilon)$ the minimal integer $m_M$ satisfying $\|M-M_m\| \leqslant \varepsilon$ for all $m\geqslant m_M$.

Proposition $\ref{prop:concentration Gammam dag Mmd}$ is a direct corollary of the following Proposition.
\begin{proposition}
\label{prop:bound of finite estimate}
 Suppose that Assumptions $\ref{as:joint distribution assumption}$ to $\ref{assumption: rate-type condition}$ hold, then $\forall \gamma\in(0,1/2)$, there exist positive constants
 \begin{align*}
 n_1=n_1(\gamma,\sigma_0,\sigma_1,\bs K,m_M(1),c_1),\quad D_3=D_3(\|M\| ,\wt C,\bs K) 
 \end{align*}
and $C'=C'(\bs K)$
, such that when $n\geqslant n_1$, we have
\begin{equation*}
\begin{aligned}
\mb P\l \lno\widehat\Gamma_m^\dagger \widehat M_m^d-\Gamma_m^\dagger M_m\rno  \leqslant \left[\frac{C+\ln(D_1m^2n)}{D_2}\right]^{\frac52}\frac{24m^{\alpha_1+1}}{\wt C\sqrt n}+D_3\frac{m^{(2\alpha_1+1)/2}}{n^{1/2}} \r&\\
\geqslant 1-\exp(- C)-2\exp(- C'm).&
\end{aligned}
\end{equation*}
Here $D_1,D_2$ and $C$ are defined in Proposition $\ref{prop:bound hatMmd Mm}$ and $\bs K$ is the uniform upper-exponentially bounded constant of $(\sqrt{\lambda_1}w_1,\dots,\sqrt{\lambda_m}w_m)$. 
\end{proposition}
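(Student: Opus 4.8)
The plan is to bound $\|\widehat\Gamma_m^\dagger\widehat M_m^d-\Gamma_m^\dagger M_m\|$ through the telescoping decomposition
\[
\widehat\Gamma_m^\dagger\widehat M_m^d-\Gamma_m^\dagger M_m=\Gamma_m^\dagger\bigl(\widehat M_m^d-M_m\bigr)+\bigl(\widehat\Gamma_m^\dagger-\Gamma_m^\dagger\bigr)\widehat M_m^d,
\]
which is engineered so that the deterministic, ill-conditioned factor $\Gamma_m^\dagger$ multiplies the MDDO deviation, while the genuinely random factor $\widehat\Gamma_m^\dagger-\Gamma_m^\dagger$ multiplies the bounded operator $\widehat M_m^d$. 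Submultiplicativity of the operator norm then gives
\[
\|\widehat\Gamma_m^\dagger\widehat M_m^d-\Gamma_m^\dagger M_m\|\le\|\Gamma_m^\dagger\|\,\|\widehat M_m^d-M_m\|+\|\widehat\Gamma_m^\dagger-\Gamma_m^\dagger\|\,\|\widehat M_m^d\|,
\]
and the two summands of the claimed bound arise from the two terms on the right.

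First I would handle the leading term. The rate-type condition (Assumption \ref{assumption: rate-type condition}) supplies $\lambda_m\ge\widetilde Cm^{-\alpha_1}$, hence the \emph{exact deterministic} bound $\|\Gamma_m^\dagger\|=\lambda_m^{-1}\le\widetilde C^{-1}m^{\alpha_1}$. Multiplying this by the control of $\|\widehat M_m^d-M_m\|$ from Proposition \ref{prop:bound hatMmd Mm}, valid on an event of probability at least $1-\exp(-C)$, reproduces precisely the first summand $\bigl[(C+\ln(D_1m^2n))/D_2\bigr]^{5/2}\,24m^{\alpha_1+1}/(\widetilde C\sqrt n)$. No random amplification enters here, which is the whole point of pairing $\Gamma_m^\dagger$ (rather than $\widehat\Gamma_m^\dagger$) with this factor.

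For the second term I would pass to the coordinates $(\langle\bs X,\phi_1\rangle,\dots,\langle\bs X,\phi_m\rangle)=(\sqrt{\lambda_1}\omega_1,\dots,\sqrt{\lambda_m}\omega_m)$, whose covariance is $\Sigma=\mathrm{diag}(\lambda_1,\dots,\lambda_m)$ with $\lambda_{\min}(\Sigma)=\lambda_m\ge\widetilde Cm^{-\alpha_1}$; this identifies $\widehat\Gamma_m^\dagger$ and $\Gamma_m^\dagger$ with $\widehat\Sigma^{-1}$ and $\Sigma^{-1}$ on $\mathrm{Im}(\Pi_m)$. Assumption \ref{assumption: sub-Gaussian} yields the coordinate-wise super-exponential moment bounds behind the uniform constant $\bs K$, so Lemma \ref{lemma, estiamtion error of inverse sample cov} applies for $n$ large and gives, with probability at least $1-2\exp(-C'm)$,
\[
\|\widehat\Gamma_m^\dagger-\Gamma_m^\dagger\|=\|\widehat\Sigma^{-1}-\Sigma^{-1}\|\le\frac{4(C_0+2)^2}{\lambda_m}\sqrt{\tfrac mn}\le\frac{4(C_0+2)^2}{\widetilde C}\,\frac{m^{(2\alpha_1+1)/2}}{n^{1/2}}.
\]
It remains to bound $\|\widehat M_m^d\|$: since $M_m=\Pi_mM\Pi_m$ by Theorem \ref{theorem, MDDO and conditional mean independence}(ii) we have $\|M_m\|\le\|M\|$, whence $\|\widehat M_m^d\|\le\|M_m\|+\|\widehat M_m^d-M_m\|\le\|M\|+1$ once $n\ge n_1$ makes the deviation at most $1$ (on the same event as the leading term). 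This produces the second summand with $D_3=4(C_0+2)^2(\|M\|+1)/\widetilde C$, matching the advertised dependence $D_3(\|M\|,\widetilde C,\bs K)$ because $C_0$ depends only on $\bs K$. A union bound over the two good events yields the overall probability $1-\exp(-C)-2\exp(-C'm)$, and $n_1$ simply collects the lower thresholds coming from Lemma \ref{lemma, estiamtion error of inverse sample cov}, Proposition \ref{prop:bound hatMmd Mm}, and $m_M(1)$.

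The main obstacle is the second term. Because $\Gamma_m$ is severely ill-conditioned, $\|\Gamma_m^\dagger\|\sim m^{\alpha_1}\to\infty$, so the error in inverting the sample covariance is amplified by $\lambda_m^{-1}$ and a careless estimate would diverge. The decomposition above is chosen precisely so that this amplification multiplies only the $O(\sqrt{m/n})$ random fluctuation rather than the larger MDDO deviation, and the sub-Gaussian random-matrix estimate of Lemma \ref{lem:esgrm} (through \eqref{equation, min max eval}) is what keeps $\lambda_{\min}^+(\widehat\Gamma_m)$ bounded away from collapse so that $\widehat\Sigma^{-1}$ is well-behaved. The one delicate verification is the uniform-in-$m$ sub-Gaussian vector property of the truncated coordinate vector inferred from the coordinate-wise Assumption \ref{assumption: sub-Gaussian}; once that is in hand, everything else is bookkeeping of constants.
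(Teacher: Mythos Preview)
Your proposal is correct and follows essentially the same approach as the paper: a telescoping decomposition together with the four bounds on $\|\Gamma_m^\dagger\|$, $\|\widehat\Gamma_m^\dagger-\Gamma_m^\dagger\|$, $\|\widehat M_m^d-M_m\|$, and the MDDO norm, combined by a union bound. The only cosmetic differences are that the paper telescopes through $\widehat\Gamma_m^\dagger M_m$ (so the second factor is $\|M_m\|$, bounded via $\|M-M_m\|\le 1$ for $m\ge m_M(1)$), whereas you telescope through $\Gamma_m^\dagger\widehat M_m^d$ and bound $\|\widehat M_m^d\|$ instead; your direct observation $\|M_m\|=\|\Pi_mM\Pi_m\|\le\|M\|$ is in fact slightly cleaner than the paper's route through Corollary~\ref{lemma, M go to Mm}.
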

\begin{proof}
By triangle inequality, one has
\begin{align*}
&\lno\widehat{\Gamma}_m^\dagger \widehat M_m^d-\Gamma_m^\dagger M_m\rno 
=\lno\widehat\Gamma_m^\dagger \widehat M_m^d-\wh\Gamma_m^\dagger M_m+\wh\Gamma_m^\dagger M_m-\Gamma_m^\dagger M_m\rno 
\\&\qquad\leqslant\lno\Gamma_m^\dagger\rno \cdot \lno\widehat M_m^d-M_m\rno +\lno\widehat\Gamma_m^\dagger-\Gamma_m^\dagger\rno \cdot \lno M_m\rno .
\end{align*}
Thus one can bound $\lno\Gamma_m^{\dag}M_m-\widehat\Gamma_m^{\dag}\widehat M_m^d\rno $ by bound $\lno\Gamma_m^\dagger\rno $, $\lno\widehat\Gamma_m^\dagger-\Gamma_m^\dagger\rno $, $\lno\widehat M_m^d-M_m\rno $ and $\lno M_m\rno $ respectively.
\begin{itemize}
 \item\textbf{Bound of $\lno\Gamma_m^\dagger\rno $}: By Assumption $\ref{assumption: rate-type condition}$, one has 
\begin{align}\label{eq:bound Gammam dagger}
\lambda_j\geqslant \wt C j^{-\alpha_1}\Rightarrow\lno\Gamma_m^\dagger\rno =\lambda_m^{-1}\leqslant \wt{C}^{-1} m^{\alpha_1}. 
\end{align} 
 \item\textbf{Bound of $\lno\widehat\Gamma_m^\dagger-\Gamma_m^\dagger\rno $}:
 Let us define $\mc H_m:=\mathrm{span}\{\phi_1,\dots,\phi_m\}$ where $\{\phi_i\}$ is introduced in Equation $\eqref{eq:X expansion}$. It is easy to check that
$\lno\widehat\Gamma_m^\dagger-\Gamma_m^\dagger\rno =\lno(\widehat\Gamma_m^\dagger-\Gamma_m^\dagger)|_{\mc H_m}\rno $ since $\l\widehat\Gamma_m^\dagger-\Gamma_m^\dagger\r{\bs{\beta}}=0$ for any ${\bs{\beta}}\in\mc{H}_m^\perp$. 
Because $\l\widehat\Gamma_m^\dagger-\Gamma_m^\dagger\r|_{\mc H_m}$ can be represented by matrix $\widehat{\Sigma}^{-1}-\Sigma^{-1}$ defined in Lemma $\ref{lemma, estiamtion error of inverse sample cov}$ under orthonormal basis $\{\phi_i\}_{i=1}^m$, one can get $\lno\widehat\Gamma_m^\dagger-\Gamma_m^\dagger\rno =\|\widehat{\Sigma}^{-1}-\Sigma^{-1}\|$.
Similarly, one can also get $\lno\Gamma_m^\dagger\rno =\lno\Sigma^{-1}\rno=\lambda_{\min}^{-1}(\Sigma)$. Thus, by Lemma $\ref{lemma, estiamtion error of inverse sample cov}$ one has
\[\mb P\l\lno\widehat\Gamma_m^\dagger-\Gamma_m^\dagger\rno \leqslant {4(C_0+2)^2}\lno\Gamma^{\dag}_m\rno \sqrt{\frac mn}\r\geqslant 1-2\exp(- C'm)\]
for sufficiently large $n\geqslant n_1'(\bs K,c_1)$
. Combing with $\lno\Gamma_m^\dagger\rno \hspace{-1mm}\leqslant \wt{C}^{-1} m^{\alpha_1}$, one can get
\begin{equation}\label{eq: distance hat gamma m dagger hat gamma m dagger}
\mb P\l\lno\widehat\Gamma_m^\dagger-\Gamma_m^\dagger\rno \leqslant \frac{4(C_0+2)^2m^{(2\alpha_1+1)/2}}{\wt Cn^{1/2}}\r\geqslant 1-2\exp(- C'm)
\end{equation}
for sufficiently large $n\geqslant n_1'(\bs K,c_1)$.
 \item\textbf{Bound of $\lno\widehat M_m^d-M_m\rno $}:
 See Proposition $\ref{prop:bound hatMmd Mm}$.
 \item \textbf{Bound of $\lno M_m\rno $}: By Corollary $\ref{lemma, M go to Mm}$, $\|M-M_m\| \leqslant 1$ for sufficiently large $m\geqslant m_M(1)$. Then by triangle inequality, one can get
\[\|M_m\| -\|M\| \leqslant \|M-M_m\| \leqslant 1.\]
Hence,
\begin{align}\label{eq:Mm leq M C}
\|M_m\| \leqslant \|M\| +1.
\end{align}
\end{itemize}
Combing \eqref{eq:bound Gammam dagger}, \eqref{eq: distance hat gamma m dagger hat gamma m dagger}, Proposition $\ref{prop:bound hatMmd Mm}$ with \eqref{eq:Mm leq M C}, one can choose $D_3$ and $n_1$ to be $\frac{4(C_0+2)^2(\|M\| +1)}{\wt C}$ and $\max\{n_0,n_1'(\bs K,c_1),m_M(1)^{1/c_1}\}$ respectively to
complete the proof where $n_0$ is defined in Proposition $\ref{prop:bound hatMmd Mm}$.
\end{proof}

\paragraph{Proof of Lemma \ref{lem:PimTPimtoT}}
\begin{proof}By the triangle inequality and compatibility of operator norm, one has
\begin{align*}
\|\Pi_m T\Pi_m-T\| &\leqslant\|\Pi_mT\Pi_m-\Pi_mT\| +\|\Pi_mT-T\| \\
&\leqslant\|(\Pi_m-I)T^*\| +\|(\Pi_m-I)T\| 
\end{align*}
where $I=\sum\limits_{i=1}^\infty\phi_i\otimes\phi_i$ for $\{\phi_i\}_{i\in\mb{Z}_{\geqslant 1}}$ defined in \eqref{eq:X expansion} being an orthonormal basis of $\mc H$. 

Since $T$ is of finite rank, let us assume that $\{e_i\}_{i=1}^k$ is an orthonormal basis of $\mathrm{Im}(T)$ where $k=\mr{rank}(T)$. For any ${\bs{\beta}}\in\mathcal{H}$ such that $\|{\bs{\beta}}\|=1$, one has $\|T{\bs{\beta}}\|\leqslant\|T\| \|{\bs{\beta}}\|=\|T\| $, so one can assume that $T{\bs{\beta}}\in\mathrm{Im}(T)$ admits the following expansion under basis $\{e_i\}_{i=1}^k$:
\[T{\bs{\beta}}=\sum_{i=1}^k b_ie_i,\quad \sum_{i=1}^k b^2_i\leqslant\|T\| ^2<\infty.\]
Thus
\[\|(I-\Pi_m)T{\bs{\beta}}\|=\left\|\sum_{i=1}^k(I-\Pi_m) b_ie_i\right\|\leqslant\sum_{i=1}^k |b_i|\cdot\|(I-\Pi_m) e_i\|.\]
Clearly, $\|(\Pi_m-I)\alpha\|~(\forall\alpha\in\H)$ tends to $0$ as $m\to\infty$ since 
\[(I-\Pi_m)\alpha=\left(\sum_{i={m+1}}^\infty\phi_i\otimes\phi_i\right)\left(\sum\limits_{i=1}^\infty c_i\phi_i\right)=\sum_{i=m+1}^\infty c_i\phi_i\xrightarrow{m\to\infty} 0\]
where we have assumed that $\alpha=\sum\limits_{i=1}^\infty c_i\phi_i$ .

Thus $\forall\varepsilon>0$, there exists some $N_i>0$ such that $\forall m> N_i$ one has $\|(\Pi_m-I)e_i\|<\varepsilon$, $(\forall i=1,...,k)$. Let $N=\max\{N_1,\cdots,N_k\}$, then $\forall m>N$ one has
\[\|(I-\Pi_m)T{\bs{\beta}}\|\leqslant\sum_{i=1}^k |b_i|\cdot\|(I-\Pi_m) e_i\|\leqslant\sum_{i=1}^k |b_i|\varepsilon\leqslant k\varepsilon\|T\| ,\]
which means that $\forall m>N$, one has
\begin{align*}
\|(\Pi_m-I)T\| &=\sup_{\|{\bs{\beta}}\|=1}\|(\Pi_m-I)T{\bs{\beta}}\|\leqslant k\varepsilon\|T\| . 
\end{align*}
Thus $\lim\limits_{m\to\infty}\|(\Pi_m-I)T\| =0$. 

Similarly, one can also get $\lim\limits_{m\to\infty}\|(\Pi_m-I)T^*\| =0$. Then the proof of Lemma $\ref{lem:PimTPimtoT}$ is completed.
\end{proof}

\section{Sin Theta Theorem}\label{ap:Sin Theta theorem}
\subsection{Sin Theta Theorem for Self-adjoint Operators}
\begin{lemma}[Proposition 2.3 in \cite{seelmann2014notes}]\label{lemma, sin theta of infinite dimension operator}
Let $B$ be a self-adjoint operator on a separable Hilbert space $\widetilde{\mathcal{H}}$, and let ${V}\in\mathcal{L}(\widetilde{\mathcal{H}})$ be another self-adjoint operator where $\mathcal{L}\left(\widetilde{\mc H}\right)$ stands for the space of bounded linear operators from a Hilbert space $\widetilde{\mc H}$ to $\widetilde{\mc H}$.
Write the spectra of $B$ and $B+V$ as \[\mathrm{spec}( B)=\sigma\cup\Sigma\quad\text{and}\quad \mathrm{spec}( B+ V)=\omega\cup\Omega
\]
with $\sigma\cap\Sigma=\varnothing=\omega\cap\Omega$, and suppose that there is $\widehat d>0$ such that
\[\mathrm{dist}(\sigma,\Omega)\geqslant \widehat d\quad\text{and}\quad\mathrm{dist}(\Sigma,\omega)\geqslant \wh d\]
where $\mathrm dist(\sigma,\Sigma):=\min\{|a-b|:a\in\sigma,b\in\Omega\}$.
Then it holds that
\[\|P_{{B}}(\sigma)-P_{{B}+{V}}(\omega)\| \leqslant\frac\pi2\frac{\| V\| }{\wh d}\]
where $P_{ B}(\sigma)$ denotes the spectral projection for $ B$ associated with $\sigma$, i.e., 
\[P_{B}(\sigma):=\frac{1}{2\pi\mathrm{i}}\oint_{\gamma}\frac{\mathrm{d}z}{z-B},\]
where $\gamma$ is a contour on $\mathbb{C}$ that encloses $\sigma$ but no other elements of $\mathrm{spec}( B)$.
\end{lemma}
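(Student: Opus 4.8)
The plan is to reduce the bound on $\norm{P_B(\sigma)-P_{B+V}(\omega)}$ to two off-diagonal block estimates, each governed by a Sylvester equation whose spectral separation is exactly one of the two distance hypotheses, and then to invoke the sharp norm bound for the inverse Sylvester operator, which is precisely where the constant $\pi/2$ enters. Write $P:=P_B(\sigma)$ and $Q:=P_{B+V}(\omega)$; since $B$ and $B+V$ are self-adjoint, $P$ and $Q$ are orthogonal projections. First I would record the algebraic identity $P-Q=P(I-Q)-(I-P)Q$ and set $A:=P(I-Q)$, $C:=(I-P)Q$. A direct computation gives $A^*C=(I-Q)P(I-P)Q=0$ and $C^*A=Q(I-P)P(I-Q)=0$, so $(A-C)^*(A-C)=A^*A+C^*C$. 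Because $A^*A$ is supported on $\mathrm{Ran}(I-Q)$ and $C^*C$ on $\mathrm{Ran}(Q)$, which are orthogonal and complementary, this sum is block-diagonal for $\widetilde{\mathcal H}=\mathrm{Ran}(I-Q)\oplus\mathrm{Ran}(Q)$, whence
\[
\norm{P-Q}=\max\{\norm{P(I-Q)},\ \norm{(I-P)Q}\}.
\]
It then suffices to bound each block by $\tfrac{\pi}{2}\norm{V}/\wh d$.

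Second, I would derive the governing Sylvester equations. Since $P$ commutes with $B$ and $Q$ with $B+V$, a short manipulation yields
\[
B\,A-A\,(B+V)=-P\,V\,(I-Q),\qquad B\,C-C\,(B+V)=-(I-P)\,V\,Q.
\]
Restricting the first relation, $B_\sigma:=B|_{\mathrm{Ran}(P)}$ is self-adjoint with spectrum $\sigma$ and $(B+V)_\Omega:=(B+V)|_{\mathrm{Ran}(I-Q)}$ is self-adjoint with spectrum $\Omega$; viewing $A$ as a map $\mathrm{Ran}(I-Q)\to\mathrm{Ran}(P)$ it solves $B_\sigma A-A(B+V)_\Omega=-(PV(I-Q))|_{\mathrm{Ran}(I-Q)}$, a Sylvester equation with right-hand side of norm at most $\norm{V}$ and spectral separation $\mathrm{dist}(\sigma,\Omega)\ge\wh d$. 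The second block is handled identically with the pair $(\Sigma,\omega)$ and $\mathrm{dist}(\Sigma,\omega)\ge\wh d$.

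Third, the crux is the analytic lemma: for self-adjoint $A_1,A_2$ with $\mathrm{dist}(\mathrm{spec}\,A_1,\mathrm{spec}\,A_2)\ge\wh d$, the solution of $A_1X-XA_2=Y$ obeys $\norm{X}\le\tfrac{\pi}{2\wh d}\norm{Y}$. I would prove this by a double-operator-integral representation: after scaling to $\wh d=1$, take the extremal $g\in L^1(\mathbb R)$ with $\int g(t)e^{\mi\xi t}\,dt=1/\xi$ for all $|\xi|\ge1$ and $\norm{g}_{L^1}=\pi/2$, and set $X=\int_{\mathbb R}g(t)\,e^{\mi tA_1}\,Y\,e^{-\mi tA_2}\,dt$, where the unitary groups come from Stone's theorem (so the argument also covers unbounded $B$). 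Inserting the spectral measures $E_1,E_2$ and applying Fubini turns this into $\iint (a-b)^{-1}\,dE_1(a)\,Y\,dE_2(b)$ over the separated spectra, so $A_1X-XA_2=\iint dE_1(a)\,Y\,dE_2(b)=Y$; meanwhile $\norm{X}\le\norm{g}_{L^1}\norm{Y}=\tfrac{\pi}{2}\norm{Y}$ since the propagators are unitary. Applying this to the two blocks gives $\norm{A},\norm{C}\le\tfrac{\pi}{2}\norm{V}/\wh d$, and combining with the reduction of Step~1 completes the proof.

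The main obstacle is this sharp analytic lemma: constructing the extremal function $g$ attaining the optimal $L^1$-norm $\pi/2$ for a function agreeing with $1/\xi$ off the unit gap, and rigorously justifying the double-operator-integral manipulations (measurability of the integrand, the Fubini exchange against the spectral measures, and convergence of the Bochner integral). This quantitative core is exactly the content underlying \cite{seelmann2014notes}; by comparison, the projection reduction and the Sylvester identities of the first two steps are routine.
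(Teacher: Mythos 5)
The paper does not prove this lemma at all: it is imported verbatim, with attribution, as Proposition~2.3 of \cite{seelmann2014notes}, and the only original material surrounding it in the appendix is the remark identifying the Riesz spectral projection with the orthogonal projection onto the relevant eigenspace in the compact case, plus the downstream Corollary~\ref{cor: sin theta self adjoint} and Lemma~\ref{lemma, sin theta of nonadjoint operator}. So there is no internal proof to compare against; what you have written is a correct reconstruction of the standard argument that underlies the cited result. Your three steps are sound: the identity $P-Q=P(I-Q)-(I-P)Q$ with $A^*C=C^*A=0$ and the block-diagonality of $A^*A+C^*C$ does give $\|P-Q\|=\max\{\|P(I-Q)\|,\|(I-P)Q\|\}$; the two Sylvester equations $BA-A(B+V)=-PV(I-Q)$ and $BC-C(B+V)=-(I-P)VQ$ check out by direct computation using $PB=BP$ and $Q(B+V)=(B+V)Q$; and the sharp bound $\|X\|\leqslant\frac{\pi}{2\wh d}\|Y\|$ via the $L^1$ Fourier extremal function is exactly the Bhatia--Davis--McIntosh/McEachin mechanism (with Sz.-Nagy's computation that the minimal $L^1$-norm of $g$ with $\wh g(\xi)=1/\xi$ on $|\xi|\geqslant 1$ equals $\pi/2$) on which Seelmann's generic $\sin\Theta$ estimate rests.

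Two points you should tighten if you intend this as a complete proof rather than a sketch. First, in Step~3 you bound \emph{the} solution of the Sylvester equation by the integral representation, but $A$ and $C$ are \emph{particular} operators satisfying those equations; to transfer the bound to them you need uniqueness of the solution when the spectra of $A_1$ and $A_2$ are disjoint (Rosenblum's theorem, or the invertibility of the Sylvester operator furnished by your own integral formula), which you use implicitly. Second, note a mild mismatch with the paper's framing: the statement here defines $P_B(\sigma)$ by a contour integral, which presupposes $\sigma$ is an isolated compact spectral part, whereas the hypotheses only separate $\sigma$ from $\Omega$ and $\Sigma$ from $\omega$ (in particular $\sigma$ and $\Sigma$ may touch); in the generality of Seelmann's proposition the projections are Borel spectral projections $E_B(\sigma)$, and for unbounded $B$ the Sylvester identities in Step~2 must be interpreted weakly on the appropriate domains, as you acknowledge. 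Neither issue affects the paper's use of the lemma, since it is only ever applied through Corollary~\ref{cor: sin theta self adjoint} to compact, finite-rank, positive semi-definite operators, where all these subtleties vanish.
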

\begin{remark}
We note that, 
if further $ B$ is compact, 
the spectral projection coincide with projection operator onto the closure of the space spanned by the eigenfunctions associated with the eigenvalues in $\sigma$. 

Specifically, if $B$ is compact, by the spectral decomposition theorem one has
\[B=\sum_{i=1}^\infty\mu_ie_i\otimes e_i\quad\text{and}\quad(z- B)^{-1}=\sum_{i=1}^\infty(z-\mu_i)^{-1}e_i\otimes e_i,\]
where $\mr{spec}(B):=\{\mu_i\}_{i=1}^\infty$ satisfies $|\mu_i|\xrightarrow{i\to\infty} 0$.
Then $\forall v\in \mathcal{H}$, it holds that
\begin{align*}P_{B}(\sigma)v&=\frac{1}{2\pi\mathrm{i}}\oint_{\gamma}({z-B})^{-1}v~{\mathrm{d}z}=\frac{1}{2\pi\mathrm{i}}\oint_{\gamma}\sum_{i=1}^\infty(z-\mu_i)^{- 1}\langle e_i,v\rangle e_i~{\mathrm{d}z}\\
&=\sum_{i=1}^\infty\left[\left(\frac{1}{2\pi\mathrm{i}}\oint_{\gamma}(z-\mu_i)^{-1}~{\mathrm{d}z}\right)\langle e_i,v\rangle e_i\right]=\sum_{i\in\{i:\mu_i\in\sigma\}}\langle e_i,v\rangle e_i.
\end{align*}
In particular, if $\sigma=\mr{spec}(B)\backslash\{0\}$, then $P_{B}(\sigma)$ is the projection operator onto the $\overline{\mathrm{Im}}(B)$.
\end{remark}

Splitting eigenvalues into nonzero part and zero part yields the following useful corollary.
\begin{corollary}\label{cor: sin theta self adjoint}
Let $B$ and $B'$ be two positive semi-definite {and compact} operators with finite rank on a separable Hilbert space $\widetilde{\mathcal{H}}$. Let $\lambda_{\min}^+( B)$ and $\lambda_{\min}^+(B')$ be the infimum of the positive eigenvalues of ${B}$ and ${B}'$ respectively. Then we have
\[\left\|P_{ B}-P_{ B'}\right\| \leqslant\frac\pi2\frac{\| B- B'\| }{\min\{\lambda_{\min}^+( B),\lambda_{\min}^+( B')\}}.\]
\end{corollary}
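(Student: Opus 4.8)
The plan is to obtain Corollary \ref{cor: sin theta self adjoint} as a direct specialization of the Sin Theta theorem (Lemma \ref{lemma, sin theta of infinite dimension operator}), using the spectral splitting that separates the nonzero spectrum from $0$. First I would set $V:=B'-B$. Since $B$ and $B'$ are positive semi-definite and compact, they are self-adjoint and bounded, so $V\in\mathcal{L}(\widetilde{\mathcal{H}})$ is self-adjoint and $B+V=B'$. This places us exactly in the hypotheses of Lemma \ref{lemma, sin theta of infinite dimension operator}, with $B$ as the base operator and $B'=B+V$ as the perturbed one.

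Next I would fix the spectral decomposition. Because $B$ and $B'$ have finite rank on the separable (infinite-dimensional) space $\widetilde{\mathcal{H}}$, each is non-invertible and hence $0\in\mathrm{spec}(B)\cap\mathrm{spec}(B')$. I therefore take $\sigma=\mathrm{spec}(B)\setminus\{0\}$, $\Sigma=\{0\}$, $\omega=\mathrm{spec}(B')\setminus\{0\}$ and $\Omega=\{0\}$, which satisfy $\sigma\cap\Sigma=\varnothing=\omega\cap\Omega$. The key computation is the gap verification: as $\sigma$ consists precisely of the positive eigenvalues of $B$, one has $\mathrm{dist}(\sigma,\Omega)=\mathrm{dist}(\sigma,\{0\})=\lambda_{\min}^+(B)$, and symmetrically $\mathrm{dist}(\Sigma,\omega)=\lambda_{\min}^+(B')$. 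Hence both separation requirements of the theorem hold with the single constant $\widehat d:=\min\{\lambda_{\min}^+(B),\lambda_{\min}^+(B')\}$.

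Applying Lemma \ref{lemma, sin theta of infinite dimension operator} then gives $\|P_B(\sigma)-P_{B'}(\omega)\|\leqslant\frac{\pi}{2}\|V\|/\widehat d$, and since the operator norm is symmetric, $\|V\|=\|B-B'\|$, which is exactly the right-hand side claimed. The final step is to translate the spectral projections into the range projections appearing in the statement: by the Remark following Lemma \ref{lemma, sin theta of infinite dimension operator}, for a compact operator the spectral projection associated with $\sigma=\mathrm{spec}(B)\setminus\{0\}$ coincides with the orthogonal projection onto $\overline{\mathrm{Im}}(B)$, i.e. with $P_B$; likewise $P_{B'}(\omega)=P_{B'}$. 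Substituting yields the asserted bound.

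I do not expect a genuine obstacle, as the statement is essentially a packaging of the abstract theorem. The only points requiring care are the gap bookkeeping---confirming that the single $\widehat d$ controls both $\mathrm{dist}(\sigma,\Omega)$ and $\mathrm{dist}(\Sigma,\omega)$---and the use of the Remark to identify $P_B(\sigma)$, $P_{B'}(\omega)$ with the projections onto $\overline{\mathrm{Im}}(B)$, $\overline{\mathrm{Im}}(B')$; one should also note that finite rank on an infinite-dimensional separable Hilbert space is what guarantees $0$ belongs to both spectra, legitimizing the choice $\Sigma=\Omega=\{0\}$.
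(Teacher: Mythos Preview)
Your proposal is correct and matches the paper's approach exactly: the paper states only that ``splitting eigenvalues into nonzero part and zero part yields the following useful corollary,'' and you have supplied precisely this splitting with the gap $\widehat d=\min\{\lambda_{\min}^+(B),\lambda_{\min}^+(B')\}$ and invoked the Remark to identify the spectral projections with $P_B$, $P_{B'}$. One minor note: the infinite-dimensionality assumption you add to guarantee $0\in\mathrm{spec}(B)$ is not actually needed, since if $\Sigma$ or $\Omega$ is empty the corresponding distance condition in Lemma~\ref{lemma, sin theta of infinite dimension operator} is vacuous and $P_B(\sigma)=I=P_B$ anyway.
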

\subsection{Sin Theta Theorem for General Operators}
When ${B}$ and ${V}$ in Lemma $\ref{lemma, sin theta of infinite dimension operator}$ are not self-adjoint, we use the symmetrization trick, which mainly depends on the following Lemma.
\begin{lemma}\label{lem:projection equality}
$P_A=P_{AA^*}$ for any bounded linear operator $A$ from a Hilbert space $\wt\H$ to $\wt\H$. Especially, $P_A=P_{AA^{\top}}$ for any matrix $A$.
\end{lemma}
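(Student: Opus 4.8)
The plan is to reduce the operator identity $P_A=P_{AA^*}$ to an equality of closed subspaces and then to an equality of null spaces. By definition, $P_A$ and $P_{AA^*}$ are the orthogonal projections onto $\overline{\mathrm{Im}}(A)$ and $\overline{\mathrm{Im}}(AA^*)$ respectively, so it suffices to prove
\[
\overline{\mathrm{Im}}(A)=\overline{\mathrm{Im}}(AA^*).
\]
First I would invoke the standard orthogonal decomposition $\overline{\mathrm{Im}}(T)=\mnull(T^*)^\perp$, valid for every bounded linear operator $T$ on $\wt{\mc H}$. Applying this to $T=A$ and to $T=AA^*$ (which is self-adjoint, so that $(AA^*)^*=AA^*$), the desired subspace identity becomes equivalent to
\[
\mnull(A^*)=\mnull(AA^*).
\]

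Next I would establish this null-space equality by two inclusions. The inclusion $\mnull(A^*)\subseteq\mnull(AA^*)$ is immediate, since $A^*x=0$ gives $AA^*x=A(A^*x)=0$. For the reverse inclusion, observe that $AA^*$ is positive semi-definite, because $\langle AA^*x,x\rangle=\langle A^*x,A^*x\rangle=\|A^*x\|^2\geqslant 0$. Hence if $AA^*x=0$, then in particular $\langle AA^*x,x\rangle=\|A^*x\|^2=0$, forcing $A^*x=0$; equivalently, one may simply quote Lemma \ref{lem: Txx=0tuiTx=0} applied to the positive semi-definite operator $AA^*$. This gives $\mnull(AA^*)\subseteq\mnull(A^*)$ and completes the null-space equality, hence the subspace identity and the lemma.

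Finally, the matrix case $P_A=P_{AA^\top}$ follows by the same argument upon replacing the adjoint $A^*$ by the transpose $A^\top$ and the ambient space by a finite-dimensional Euclidean space; there no closure is needed, as finite-dimensional ranges are automatically closed. I do not expect a substantive obstacle, since the content is a clean null-space/range duality; the only point requiring care is to argue via $\overline{\mathrm{Im}}(T)=\mnull(T^*)^\perp$ rather than through $\mathrm{Im}(T)$ itself, because the range of a bounded operator on an infinite-dimensional space need not be closed. The projections being defined onto the respective closures, this bookkeeping is exactly what makes the identity hold without any compactness or finite-rank hypothesis.
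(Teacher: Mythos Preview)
Your proof is correct and follows essentially the same route as the paper: both reduce the identity to $\mnull(A^*)=\mnull(AA^*)$, establish this null-space equality via the computation $\langle AA^*x,x\rangle=\|A^*x\|^2$, and then pass to closed ranges by orthogonal complementation. The paper's write-up is slightly terser (it uses its convention that $\mathrm{Im}(T)$ already denotes the closure), but the logical content is identical.
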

\begin{proof}First we show that the null space of  $A^*$ is the same as the null space of $AA^*$.
On the one hand, 
\[x\in\mathrm{null}(A^*)\Longrightarrow
A^*x=0\Longrightarrow AA^*x=0\Longrightarrow x\in\mathrm{null}(AA^*); 
\]
One the other hand,
\begin{align*}x\in\mathrm{null}(AA^*)&\Longrightarrow
AA^*x=0\Longrightarrow \langle x,AA^*x\rangle=\langle A^*x,A^*x\rangle=\|A^*x\|^2=0\\
&\Longrightarrow A^*x=0\Longrightarrow x\in\mathrm{null}(A^*).
\end{align*}
Hence, we have $\mathrm{null}(A^*)=\mathrm{null}(AA^*)$. Take the orthogonal complement of the both sides of this equality, we can get
\[\mathrm{null}(A^*)^{\perp}=\mathrm{null}(AA^*)^{\perp}\Longrightarrow {\mathrm{Im}(A)}={\mathrm{Im}(AA^*)}.\]
\end{proof}
Then we have the following Sin Theta theorem for general operator.
\begin{lemma}\label{lemma, sin theta of nonadjoint operator}
Let $ B,B'\in\mathcal{L}(\widetilde{\mathcal{H}})$ be two compact operators (not necessarily self-adjoint) with finite rank.
Then we have
\begin{align*}
\left\|P_{ B}-P_{ B'}\right\| &\leqslant\frac\pi2\frac{\| B B^*- B'B'^*\| }{\min\left\{\sigma_{\min}^+( B)^2,\sigma_{\min}^+(B')^2\right\}}\\
&\leqslant \frac\pi2\frac{\| B- B'\| ^2+2\| B- B'\| \| B'\| }{\min\left\{\sigma_{\min}^+( B)^2,\sigma_{\min}^+( B')^2\right\}}.
\end{align*}
\end{lemma}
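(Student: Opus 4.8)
The plan is to reduce the general (non-self-adjoint) case to the self-adjoint Sin Theta bound already recorded in Corollary~\ref{cor: sin theta self adjoint}, using the symmetrization identity from Lemma~\ref{lem:projection equality}. The two inequalities then split naturally: the first is a direct application of the self-adjoint corollary to the symmetrized operators $BB^*$ and $B'B'^*$, while the second is a purely algebraic perturbation estimate that re-expresses the numerator in terms of $\|B-B'\|$ and $\|B'\|$.

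First I would invoke Lemma~\ref{lem:projection equality} to write $P_B=P_{BB^*}$ and $P_{B'}=P_{B'B'^*}$, so that $\|P_B-P_{B'}\|=\|P_{BB^*}-P_{B'B'^*}\|$. Since $B$ and $B'$ are compact of finite rank, the operators $BB^*$ and $B'B'^*$ are positive semi-definite, compact, and of finite rank, so the hypotheses of Corollary~\ref{cor: sin theta self adjoint} are met. I would then observe that the positive eigenvalues of $BB^*$ are exactly the squares of the positive singular values of $B$, giving $\lambda_{\min}^+(BB^*)=\sigma_{\min}^+(B)^2$ and likewise $\lambda_{\min}^+(B'B'^*)=\sigma_{\min}^+(B')^2$. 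Substituting these identifications into the self-adjoint bound yields the first inequality
\[
\|P_B-P_{B'}\|\leqslant\frac\pi2\frac{\|BB^*-B'B'^*\|}{\min\{\sigma_{\min}^+(B)^2,\sigma_{\min}^+(B')^2\}}.
\]

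For the second inequality I would bound the numerator via the telescoping decomposition $BB^*-B'B'^*=(B-B')B^*+B'(B-B')^*$, which after the triangle inequality and the adjoint isometry $\|T^*\|=\|T\|$ gives $\|BB^*-B'B'^*\|\leqslant\|B-B'\|\,\|B\|+\|B'\|\,\|B-B'\|$. Finally, applying $\|B\|\leqslant\|B'\|+\|B-B'\|$ converts this into $\|B-B'\|^2+2\|B-B'\|\,\|B'\|$, completing the proof. The argument is essentially mechanical once the symmetrization identity is available; the only point requiring genuine care is verifying that $BB^*$ and $B'B'^*$ satisfy all hypotheses of the self-adjoint corollary and that the spectral quantities translate correctly as $\lambda_{\min}^+(BB^*)=\sigma_{\min}^+(B)^2$, so I anticipate no real obstacle.
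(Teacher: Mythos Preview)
Your proposal is correct and follows essentially the same approach as the paper: reduce to the self-adjoint case via $P_B=P_{BB^*}$ (Lemma~\ref{lem:projection equality}), apply Corollary~\ref{cor: sin theta self adjoint}, and then bound $\|BB^*-B'B'^*\|$ algebraically. The only cosmetic difference is that the paper uses the three-term expansion $BB^*-B'B'^*=(B-B')(B-B')^*+(B-B')B'^*+B'(B-B')^*$ directly, whereas you use the two-term telescoping $(B-B')B^*+B'(B-B')^*$ followed by $\|B\|\leqslant\|B'\|+\|B-B'\|$; these are equivalent manipulations yielding the same bound.
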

\begin{proof}By Lemma $\ref{lem:projection equality}$, one can get $\left\|P_{ B}-P_{ B'}\right\| =\left\|P_{ B B^*}-P_{ B' B'^*}\right\| $.
Since $ BB^*, B'B'^*$ are both self-adjoint and compact, by Lemma $\ref{cor: sin theta self adjoint}$, one has
\begin{align*}
\left\|P_{ B B^*}-P_{ B' B'^*}\right\| \leqslant \frac{\pi}{2}\frac{\| B B^*- B' B'^*\| }{\min\left\{\lambda_{\min}^+\left( B B^*\right),\lambda_{\min}^+\left( B' B'^*\right)\right\}}.
\end{align*}
Then the proof is completed in view of the following inequality:
\begin{align}
\left\| B B^*- B' B'^*\right\| &= \|( B- B')( B- B')^*\hspace{-0.5mm}+\hspace{-0.5mm}( B-B')(B')^*\hspace{-0.5mm}+\hspace{-0.5mm} B'( B- B')^*\| \nonumber\\
&\leqslant \| B- B'\| ^2+2\| B- B'\| \| B'\| . \label{eq:sy ineq}
\end{align}
\end{proof}

\section{Proof of Theorem \ref{theorem, total convergence rate}}
Thanks to the triangle inequality, one can bound the subspace estimation error by bounding the error term (i): $\mathbf{ Loss}_1:=\left\|P_{\mc S_{\Y|\X}^{(m)}}-P_{ \widehat {\mc S}_{\Y|\X}^{(m)}}\right\| $ and error term (ii): $\mathbf{ Loss}_2:= \left\|P_{\mathcal S_{\Y|\boldsymbol{X}}}-P_{\mathcal S_{\Y|\boldsymbol{X}}^{(m)}}\right\| $ respectively.
\subsection{Upper bound of error term (i)}
We first give the following lemmas, whose proofs are all deferred to the end of this section.
\begin{lemma}\label{lem:Gammam dagger Mm uniformly bounded}
Under Assumptions $\ref{as:joint distribution assumption}$ and $\ref{as:Linearity condition and Coverage condition}$, it holds that $\|\Gamma_m^\dagger M_m\| \leq \|\Gamma^{-1}M\| (\forall m).$
\end{lemma}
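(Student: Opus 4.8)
The plan is to reduce everything to the orthonormal eigenbasis $\{\phi_i\}$ of $\Gamma$, in which $\Gamma$, $\Gamma_m^\dagger$ and $\Pi_m$ are all simultaneously diagonal, and then to realize $\Gamma_m^\dagger M_m$ as a two-sided compression of the \emph{bounded} operator $\Gamma^{-1}M$ by the norm-one projection $\Pi_m$. Once this is done, the bound is immediate from submultiplicativity and $\|\Pi_m\|\leqslant 1$.

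First I would rewrite $M_m$ in terms of $M$. Applying Theorem \ref{theorem, MDDO and conditional mean independence}(ii) with the self-adjoint truncation operator $T=\Pi_m$ (so $T^*=\Pi_m$ and $\bs X^{(m)}=\Pi_m\bs X=\Pi_m^*\bs X$) gives $M_m=\mathrm{MDDO}(\Pi_m\bs X\mid\Y)=\Pi_m M\Pi_m$, exactly the identity already exploited inside the proof of Lemma \ref{lemma, way of estimate truncate central subspace}. Next I would compute $\Gamma_m^\dagger M_m$ by matching Fourier coefficients. Since $\Gamma_m^\dagger=\sum_{i=1}^m\lambda_i^{-1}\phi_i\otimes\phi_i$ scales the first $m$ modes by $\lambda_i^{-1}$ and annihilates the rest, and since $\langle\Pi_m M\Pi_m\beta,\phi_i\rangle=\langle M\Pi_m\beta,\phi_i\rangle$ for $i\leqslant m$, applying $\Gamma_m^\dagger$ to $M_m\beta$ yields
\[
\Gamma_m^\dagger M_m\beta=\sum_{i=1}^m\lambda_i^{-1}\langle M\Pi_m\beta,\phi_i\rangle\,\phi_i=\Pi_m\,\Gamma^{-1}M\,\Pi_m\,\beta,
\]
i.e. $\Gamma_m^\dagger M_m=\Pi_m\Gamma^{-1}M\Pi_m$. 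The coefficient-level verification is cleaner than formally commuting the unbounded $\Gamma^{-1}$ past $\Pi_m$, because the outer $\Pi_m$ truncates the action of $\Gamma^{-1}$ to the first $m$ modes and no domain issue arises.

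Finally I would check that $\Gamma^{-1}M$ is genuinely bounded and then take norms. By Corollary \ref{corollary, MDDO and central subspace}, $\mathrm{Im}(M)=\Gamma\mc S_{\Y|\vX}=\mathrm{span}\{\Gamma\bs\beta_1,\dots,\Gamma\bs\beta_d\}$ is $d$-dimensional and lies in the domain of $\Gamma^{-1}$; hence $\Gamma^{-1}M$ is a finite-rank bounded operator (mapping into $\mc S_{\Y|\vX}$) and $\|\Gamma^{-1}M\|<\infty$. Since $\Pi_m$ is an orthogonal projection with $\|\Pi_m\|\leqslant 1$,
\[
\|\Gamma_m^\dagger M_m\|=\|\Pi_m\Gamma^{-1}M\Pi_m\|\leqslant\|\Pi_m\|^2\,\|\Gamma^{-1}M\|\leqslant\|\Gamma^{-1}M\|,
\]
which is the claim, uniformly in $m$.

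I expect the only delicate point to be the middle step: giving meaning to $\Gamma^{-1}M$ and to the identity $\Gamma_m^\dagger M_m=\Pi_m\Gamma^{-1}M\Pi_m$ despite $\Gamma^{-1}$ being unbounded. Establishing boundedness of $\Gamma^{-1}M$ through the finite-dimensionality of $\mathrm{Im}(M)$ (Corollary \ref{corollary, MDDO and central subspace}) is precisely what legitimizes treating $\|\Gamma^{-1}M\|$ as a finite quantity and lets the norm-one compression bound go through; everything else is bookkeeping in the eigenbasis.
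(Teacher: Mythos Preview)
Your proof is correct and follows essentially the same route as the paper: establish $\Gamma_m^\dagger M_m=\Pi_m\Gamma^{-1}M\Pi_m$, observe that $\Gamma^{-1}M$ is a bounded (finite-rank) operator via Corollary~\ref{corollary, MDDO and central subspace}, and conclude by submultiplicativity together with $\|\Pi_m\|\leqslant 1$. Your coefficient-level derivation of the key identity is a bit more careful about the unboundedness of $\Gamma^{-1}$ than the paper's direct manipulation, but the argument is the same.
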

\begin{lemma}\label{lem: Gamma inverse M to Gammam dagger Mm}Under Assumptions $\ref{as:joint distribution assumption}$ and $\ref{as:Linearity condition and Coverage condition}$, we have \[\lim\limits_{m\to\infty}\lno\Gamma^{-1}M-\Gamma_m^\dagger M_m\rno =0.\]
\end{lemma}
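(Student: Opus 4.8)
The plan is to collapse the difference $\Gamma^{-1}M-\Gamma_m^\dagger M_m$ onto a single fixed finite-rank operator compressed by the truncation projections, and then invoke Lemma~\ref{lem:PimTPimtoT} for the limit. The genuine obstacle is that $\Gamma^{-1}$ is unbounded, so the naive split $\lno\Gamma^{-1}(M-M_m)\rno\leqslant\lno\Gamma_m^\dagger\rno\lno M-M_m\rno$ is useless because $\lno\Gamma_m^\dagger\rno=\lambda_m^{-1}\to\infty$ swamps the vanishing factor $\lno M-M_m\rno$. The way around this is to exploit that the images of both $M$ and $M_m$ sit inside $\mathrm{Im}(\Gamma)$, so that pre-composing with $\Gamma^{-1}$ (resp. $\Gamma_m^\dagger$) yields honest bounded finite-rank operators whose difference we can control.

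First I would record two structural facts. Applying Theorem~\ref{theorem, MDDO and conditional mean independence}(ii) with the self-adjoint projection $T=\Pi_m$ (so $T^*=\Pi_m$ and $\Pi_m\boldsymbol X=\boldsymbol X^{(m)}$) gives $M_m=\Pi_m M\Pi_m$. Next, by Corollary~\ref{corollary, MDDO and central subspace} we have $\mathrm{Im}(M)=\Gamma\,\mc S_{\Y|\vX}\subseteq\mathrm{Im}(\Gamma)$, so the operator
\[
N:=\Gamma^{-1}M
\]
is well defined; it is of finite rank (indeed $\mathrm{Im}(N)=\mc S_{\Y|\vX}$, so $\mathrm{rank}(N)=d$), and since $\Gamma$ is injective with $\Gamma\Gamma^{-1}y=y$ for every $y\in\mathrm{Im}(\Gamma)$, it satisfies $\Gamma N=M$.

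The heart of the argument is then the identity $\Gamma_m^\dagger M_m=\Pi_m N\Pi_m$. Using $M_m=\Pi_m M\Pi_m=\Pi_m\Gamma N\Pi_m$ together with $\Pi_m\Gamma=\Gamma_m$ from \eqref{eq: Gamma m def}, we get $M_m=\Gamma_m N\Pi_m$; and since $\Gamma_m^\dagger\Gamma_m=\Pi_m$ (immediate from the spectral forms $\Gamma_m^\dagger=\sum_{i=1}^m\lambda_i^{-1}\phi_i\otimes\phi_i$ and $\Gamma_m=\sum_{i=1}^m\lambda_i\phi_i\otimes\phi_i$), associativity yields $\Gamma_m^\dagger M_m=\Gamma_m^\dagger\Gamma_m N\Pi_m=\Pi_m N\Pi_m$. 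Because $\Gamma^{-1}M=N$, this produces the clean reduction
\[
\lno\Gamma^{-1}M-\Gamma_m^\dagger M_m\rno=\lno N-\Pi_m N\Pi_m\rno.
\]

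Finally, as $N$ is of finite rank, Lemma~\ref{lem:PimTPimtoT} gives $\lno\Pi_m N\Pi_m-N\rno\to0$ as $m\to\infty$, which is exactly the assertion. The single point requiring care is the legitimacy of every manipulation involving the unbounded $\Gamma^{-1}$: each application of $\Gamma^{-1}$ above acts only on vectors lying in $\mathrm{Im}(\Gamma)$ (the ranges of $M$ and $M_m$), so all the displayed compositions are valid identities between bounded finite-rank operators, and the entire analytic content of the limit is discharged by Lemma~\ref{lem:PimTPimtoT}.
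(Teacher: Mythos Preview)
Your proof is correct and follows essentially the same route as the paper: establish the identity $\Gamma_m^\dagger M_m=\Pi_m(\Gamma^{-1}M)\Pi_m$ and then apply Lemma~\ref{lem:PimTPimtoT} to the finite-rank operator $\Gamma^{-1}M$. The paper's version simply asserts this identity as ``easy to check'' (and in the proof of Lemma~\ref{lem:Gammam dagger Mm uniformly bounded} records $\Gamma_m^\dagger=\Pi_m\Gamma^{-1}$ directly), whereas you spell out the derivation more carefully and explicitly address why $\Gamma^{-1}M$ is a bona fide bounded operator despite $\Gamma^{-1}$ being unbounded; this extra care is welcome but does not constitute a different approach.
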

\noindent We denote by $m_T(\varepsilon)$ the minimal integer $m_T$ satisfying $\lno\Gamma^{- 1}M-\Gamma_m^\dagger M_m\rno \hspace{-1mm}\leqslant \varepsilon$ for all $m\geqslant m_T$ and define an event 
$$\ttE:=\lb \left\|\widehat\Gamma_m^\dagger \widehat M_m^d-\Gamma_m^\dagger M_m\right\|  \leqslant\hspace{-0.5mm}\left(\tfrac{D_0+1}{D_2}\right)^{\frac52}\tfrac{24}{\wt C}n^{c_1(\alpha_1+1)+\gamma-\frac{1}{2}}+D_3n^{\frac{c_1(2\alpha_1+1)-1}{2}}\rb.$$
Then by taking $C$ to be $(D_0+1)n^{\frac{2\gamma}{5}}-\ln\l D_1m^2n \r$ in  Proposition \ref{prop:bound of finite estimate}, one has: for $n\geqslant \l\frac{D_0+1}{D_2}\r^{\frac{5}{1-2\gamma}}$,
$$\P(\ttE)\geq 1-D_1m^2n\exp\left[-(D_0+1)n^{\frac{2\gamma}{5}}\right] -2\exp(- C'm).$$
\begin{lemma}\label{lem:lower bound sigma min total}
Introducing $
\bigtriangleup :=\max\lb \frac{\sigma_d(\Gamma^{-1} M)}{2},\frac{\sigma_d(\Gamma^{-1} M)^2}{4\|\Gamma^{-1}M\| } \rb$.
Suppose that Assumptions $\ref{as:joint distribution assumption}$ to $\ref{assumption: rate-type condition}$ hold, $c_1(2\alpha_1+1)-1<0$ and $2(c_1(\alpha_1+1)+\gamma)-1<0$. Then there exists a positive constant
\begin{align*}
n_2'=n_2'\l\sigma_d(\Gamma^{-1}M),\|\Gamma^{-1}M\| , \gamma,\sigma_0,\sigma_1,\bs K,m_M(1),c_1,m_T\l \tfrac{\bigtriangleup}{2}\r,\wt C,\alpha_1\r
\end{align*}
such that when $n\geqslant n_2'$, we have
\begin{align}
\sigma_{\min}^+(\Gamma_m^{\dagger} M_m)^2\geqslant \tfrac{\sigma_d(\Gamma^{-1}M)^2}{2} \label{eq: lower bound of sigma min}. 
\end{align}
Furthermore, Conditioning on $\ttE$, we have
\begin{align}\label{eq: lower bound of sigma min hat}
&\sigma_{\min}^+(\wh\Gamma_m^{\dagger}\wh M_m^d)^2\geqslant \tfrac{\sigma_d(\Gamma^{-1}M)^2}{2}.
\end{align}
\end{lemma}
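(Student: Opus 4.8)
The plan is to reduce both inequalities to a single deterministic perturbation statement about the smallest positive singular value of a rank-$d$ operator, and then feed it the two required closeness estimates: the truncation error from Lemma~\ref{lem: Gamma inverse M to Gammam dagger Mm} for \eqref{eq: lower bound of sigma min}, and the sampling error encoded in the event $\ttE$ for \eqref{eq: lower bound of sigma min hat}. Write $B:=\Gamma^{-1}M$, which has rank $d$ and smallest positive singular value $\sigma_d(\Gamma^{-1}M)$; note that $\|B\|=\|\Gamma^{-1}M\|<\infty$ by Lemma~\ref{lem:Gammam dagger Mm uniformly bounded}, so $\bigtriangleup$ is finite and positive. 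Since $\|B\|=\sigma_1(\Gamma^{-1}M)\geqslant\sigma_d(\Gamma^{-1}M)$, the second term of $\bigtriangleup$ never exceeds the first, so in fact $\bigtriangleup=\sigma_d(\Gamma^{-1}M)/2$ and $\bigtriangleup/2=\sigma_d(\Gamma^{-1}M)/4$. The core claim I would isolate is the following: \emph{if $A$ is a finite-rank operator with $\rank(A)=d$ and $\|A-B\|\leqslant\bigtriangleup/2$, then $\sigma_{\min}^+(A)^2\geqslant\tfrac12\sigma_d(\Gamma^{-1}M)^2$.} Both displays are instances of this, taken with $A=\Gamma_m^\dagger M_m$ and $A=\wh\Gamma_m^\dagger\wh M_m^d$.

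\textbf{The perturbation step.} The claim rests on the stability of singular values under perturbation, $|\sigma_d(A)-\sigma_d(B)|\leqslant\|A-B\|$, which follows from the min--max characterization of singular values in the spirit of Lemma~\ref{lem:minimax operator}; equivalently one may argue through the self-adjoint, positive semi-definite, compact operators $AA^*$ and $BB^*$, for which Corollary~\ref{coro:wely ineq operator} gives $|\lambda_d(AA^*)-\lambda_d(BB^*)|\leqslant\|AA^*-BB^*\|$ and the symmetrization bound \eqref{eq:sy ineq} controls $\|AA^*-BB^*\|\leqslant\|A-B\|^2+2\|A-B\|\,\|B\|$. Using the direct form, $\|A-B\|\leqslant\bigtriangleup/2=\sigma_d(\Gamma^{-1}M)/4$ forces $\sigma_{\min}^+(A)=\sigma_d(A)\geqslant\tfrac34\sigma_d(\Gamma^{-1}M)$, hence $\sigma_{\min}^+(A)^2\geqslant\tfrac9{16}\sigma_d(\Gamma^{-1}M)^2\geqslant\tfrac12\sigma_d(\Gamma^{-1}M)^2$. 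The comfortable slack $\tfrac9{16}$ over $\tfrac12$ will matter in the empirical step. One must also verify that $\rank(A)=d$ together with $\|A-B\|<\sigma_d(\Gamma^{-1}M)$ keeps all $d$ nonzero singular values of $A$ bounded away from $0$, so that $\sigma_{\min}^+(A)$ is genuinely the $d$-th singular value and no spurious smaller one intervenes.

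\textbf{Proof of \eqref{eq: lower bound of sigma min} and \eqref{eq: lower bound of sigma min hat}.} For \eqref{eq: lower bound of sigma min}, Lemma~\ref{lem: Gamma inverse M to Gammam dagger Mm} gives $\|\Gamma^{-1}M-\Gamma_m^\dagger M_m\|\to0$, so by definition of $m_T(\cdot)$ the choice $m\geqslant m_T(\bigtriangleup/2)$ yields $\|\Gamma^{-1}M-\Gamma_m^\dagger M_m\|\leqslant\bigtriangleup/2$; since $m=n^{c_1}$, this holds once $n\geqslant m_T(\bigtriangleup/2)^{1/c_1}$, a requirement folded into $n_2'$. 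Applying the perturbation step to $A=\Gamma_m^\dagger M_m$ gives $\sigma_{\min}^+(\Gamma_m^\dagger M_m)\geqslant\tfrac34\sigma_d(\Gamma^{-1}M)$, which is \eqref{eq: lower bound of sigma min} and more. For \eqref{eq: lower bound of sigma min hat}, I would work on $\ttE$, on which Proposition~\ref{prop:bound of finite estimate} bounds $\|\wh\Gamma_m^\dagger\wh M_m^d-\Gamma_m^\dagger M_m\|$ by $\big(\tfrac{D_0+1}{D_2}\big)^{5/2}\tfrac{24}{\wt C}\,n^{\,c_1(\alpha_1+1)+\gamma-\frac12}+D_3\,n^{\,\frac{c_1(2\alpha_1+1)-1}{2}}$. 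The two hypotheses $c_1(2\alpha_1+1)-1<0$ and $2(c_1(\alpha_1+1)+\gamma)-1<0$ make both exponents strictly negative, so this quantity $\delta'(n)$ tends to $0$. Singular value stability between $\wh\Gamma_m^\dagger\wh M_m^d$ and $\Gamma_m^\dagger M_m$ then gives $\sigma_{\min}^+(\wh\Gamma_m^\dagger\wh M_m^d)\geqslant\tfrac34\sigma_d(\Gamma^{-1}M)-\delta'(n)$, and taking $n$ large enough that $\delta'(n)\leqslant(\tfrac34-\tfrac1{\sqrt2})\sigma_d(\Gamma^{-1}M)$ delivers $\sigma_{\min}^+(\wh\Gamma_m^\dagger\wh M_m^d)\geqslant\tfrac1{\sqrt2}\sigma_d(\Gamma^{-1}M)$, i.e. \eqref{eq: lower bound of sigma min hat}. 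The threshold on $n$ is again absorbed into $n_2'$.

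\textbf{Main obstacle.} The delicate point is that $\Gamma_m^\dagger M_m$ and $\wh\Gamma_m^\dagger\wh M_m^d$ are not self-adjoint, so the clean eigenvalue comparison of Corollary~\ref{coro:wely ineq operator} does not apply to them directly; one must pass to the symmetrized operators $AA^*$ (invoking \eqref{eq:sy ineq}) or justify the singular-value stability inequality from scratch, and simultaneously guarantee rank preservation so that ``$\sigma_{\min}^+$'' tracks the same $d$-th singular value throughout the two perturbations. Managing this together with the bookkeeping of the threshold $\bigtriangleup$ and the constant $\tfrac12$ — ensuring enough slack at the population-truncated stage to survive the vanishing sampling perturbation on $\ttE$ — is where the care lies; the rest is a direct assembly of Lemmas~\ref{lem: Gamma inverse M to Gammam dagger Mm} and \ref{lem:Gammam dagger Mm uniformly bounded}, Corollary~\ref{coro:wely ineq operator}, and Proposition~\ref{prop:bound of finite estimate}.
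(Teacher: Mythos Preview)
Your proposal is correct and follows the same overall strategy as the paper: control $\sigma_{\min}^+$ by a Weyl-type perturbation from the rank-$d$ reference $\Gamma^{-1}M$, feeding in Lemma~\ref{lem: Gamma inverse M to Gammam dagger Mm} for the population display and the event $\ttE$ for the empirical one. Two minor implementation differences are worth noting. First, the paper argues exclusively through the symmetrized operators, applying Corollary~\ref{coro:wely ineq operator} to $AA^*$ versus $BB^*$ and then \eqref{eq:sy ineq}; you instead lead with the direct singular-value inequality $|\sigma_d(A)-\sigma_d(B)|\leqslant\|A-B\|$, which is cleaner and avoids the quadratic bookkeeping in \eqref{eq:sy ineq} (and, as you note, the symmetrized route is equivalent). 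Second, for \eqref{eq: lower bound of sigma min hat} the paper compares $\wh\Gamma_m^\dagger\wh M_m^d$ directly to $\Gamma^{-1}M$ via one triangle inequality (adding the truncation and sampling errors and requiring the sum to be at most $\bigtriangleup$), whereas you first lower-bound $\sigma_{\min}^+(\Gamma_m^\dagger M_m)$ by $\tfrac34\sigma_d(\Gamma^{-1}M)$ and then perturb from there; your two-step version exploits the slack $\tfrac9{16}>\tfrac12$ and yields a slightly different threshold on $n$, but both are absorbed into $n_2'$. Your observation that $\bigtriangleup=\sigma_d(\Gamma^{-1}M)/2$ always (since $\|\Gamma^{-1}M\|\geqslant\sigma_d(\Gamma^{-1}M)$) is correct and in fact simplifies the constants relative to the paper's presentation.
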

The following proposition is an upper bound of error term (i):
\begin{proposition}\label{proposition, estimation error}
Positive constants $D_1$, $D_2$ and $C'$  as in Proposition $\ref{prop:bound of finite estimate}$,
suppose that Assumptions $\ref{as:joint distribution assumption}$ to $\ref{assumption: rate-type condition}$ hold, then $\forall \gamma\in(0,1/2)$, if $c_1$ satisfies $2c_1(\alpha_1+1)+2\gamma-1<0$ and $c_1(2\alpha_1+1)-1<0$, there exists a positive constant $C_1:=C_1\l \|\Gamma^{-1}M\| ,\sigma_d(\Gamma^{-1}M) ,\wt C,\gamma,\sigma_0,\sigma_1\r$ such that
\begin{align*}
\P\l
\lno P_{\mc{S}_{\Y|\X}^{(m)}}-P_{ \widehat{\mc{S}}_{\Y|\X}^{(m)}}\rno \leqslant C_1\frac{m^{\alpha_1+1}}{n^{1/2-\gamma}}\r\geqslant1-2\exp(- C'm)&\\
- D_1m^2n\exp\l -(D_0+1)n^{\frac{2\gamma}{5}} \r&,
\end{align*}
when 
\begin{align*}
n\geqslant\max\Bigg\{ n_1,\l\tfrac{D_0+1}{D_2}\r^{\frac{5}{1-2\gamma}},\left[\tfrac{\|\Gamma^{-1}M\|  \wt C}{48}\l\tfrac{D_2}{D_0+1}\r^{\frac52}\right]^{\frac{2}{2(c_1(\alpha_1+1)+\gamma)-1}}&,\\
\l \tfrac{\|\Gamma^{-1}M\| }{2D_3}\r^{\frac{2}{c_1(2\alpha_1+1)-1}},n_2',\left[ \tfrac{D_3\wt C}{24}\l \tfrac{D_2}{D_0+1} \r^{\frac52} \right]^{\frac2{2\gamma+c_1}}&\Bigg\}
\end{align*}
where $n_2'$ is defined in Lemma $\ref{lem:lower bound sigma min total}$.
\end{proposition}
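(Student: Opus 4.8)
The plan is to recognize $\mathbf{Loss}_1=\lno P_{\mc{S}_{\Y|\X}^{(m)}}-P_{\wh{\mc{S}}_{\Y|\X}^{(m)}}\rno$ as the gap between the range-projections of two finite-rank operators and to feed it into the Sin Theta machinery of Appendix \ref{ap:Sin Theta theorem}. By Lemma \ref{lemma, way of estimate truncate central subspace} one has $\mc{S}_{\Y|\X}^{(m)}=\Gamma_m^\dagger\mathrm{Im}\{M_m\}=\mathrm{Im}\{\Gamma_m^\dagger M_m\}$, while \eqref{def: estimator central subspace} gives $\wh{\mc{S}}_{\Y|\X}^{(m)}=\mathrm{Im}\{\wh\Gamma_m^\dagger\wh M_m^d\}$, so that $\mathbf{Loss}_1=\lno P_{\Gamma_m^\dagger M_m}-P_{\wh\Gamma_m^\dagger\wh M_m^d}\rno$. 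Both operators are of finite rank but not self-adjoint, so I would apply Lemma \ref{lemma, sin theta of nonadjoint operator} with $B=\wh\Gamma_m^\dagger\wh M_m^d$ and $B'=\Gamma_m^\dagger M_m$, obtaining
\[
\mathbf{Loss}_1\leqslant\frac\pi2\frac{\lno B-B'\rno^2+2\lno B-B'\rno\lno B'\rno}{\min\{\sigma_{\min}^+(B)^2,\sigma_{\min}^+(B')^2\}}.
\]
The choice $B'=\Gamma_m^\dagger M_m$ is deliberate: it puts the uniformly controllable factor $\lno\Gamma_m^\dagger M_m\rno$ into the numerator. The remaining work is to bound the three ingredients: the perturbation $\lno B-B'\rno$, the norm $\lno B'\rno$, and the smallest positive singular values in the denominator.

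For the numerator I would work on the event $\ttE$ introduced just before Lemma \ref{lem:lower bound sigma min total}. By Proposition \ref{prop:bound of finite estimate} (equivalently the rate in Proposition \ref{prop:concentration Gammam dag Mmd}), on $\ttE$ we have $\lno B-B'\rno\lesssim n^{c_1(\alpha_1+1)+\gamma-1/2}+n^{(c_1(2\alpha_1+1)-1)/2}$. Since $m=n^{c_1}$, the first exponent exceeds the second by $c_1/2+\gamma>0$, so for $n$ past the thresholds in the statement the first term dominates and $\lno B-B'\rno\lesssim m^{\alpha_1+1}/n^{1/2-\gamma}$. The factor $\lno B'\rno=\lno\Gamma_m^\dagger M_m\rno$ is bounded uniformly in $m$ by $\lno\Gamma^{-1}M\rno$ through Lemma \ref{lem:Gammam dagger Mm uniformly bounded}. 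Because $\lno B-B'\rno\to0$ under the two sign conditions $2c_1(\alpha_1+1)+2\gamma-1<0$ and $c_1(2\alpha_1+1)-1<0$, the quadratic term $\lno B-B'\rno^2$ is of strictly smaller order than the cross term, so the numerator is $\lesssim\lno\Gamma^{-1}M\rno\cdot m^{\alpha_1+1}/n^{1/2-\gamma}$.

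For the denominator I would invoke Lemma \ref{lem:lower bound sigma min total}, which under the same sign conditions and for $n\geqslant n_2'$ gives $\sigma_{\min}^+(\Gamma_m^\dagger M_m)^2\geqslant\sigma_d(\Gamma^{-1}M)^2/2$ unconditionally and $\sigma_{\min}^+(\wh\Gamma_m^\dagger\wh M_m^d)^2\geqslant\sigma_d(\Gamma^{-1}M)^2/2$ on $\ttE$. Hence the denominator is bounded below by the strictly positive constant $\sigma_d(\Gamma^{-1}M)^2/2$, and substituting into the Sin Theta inequality yields $\mathbf{Loss}_1\leqslant C_1\,m^{\alpha_1+1}/n^{1/2-\gamma}$ on $\ttE$, with $C_1$ absorbing $\pi$, $\lno\Gamma^{-1}M\rno$, $\sigma_d(\Gamma^{-1}M)$, $\wt C$ and the constants $D_0,D_1,D_2$. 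The probability estimate then follows directly from the record $\P(\ttE)\geqslant1-D_1m^2n\exp[-(D_0+1)n^{2\gamma/5}]-2\exp(-C'm)$ (valid once $n\geqslant(\tfrac{D_0+1}{D_2})^{5/(1-2\gamma)}$).

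I expect the denominator to be the main obstacle: the whole argument collapses unless the smallest positive singular values of both $B$ and $B'$ stay bounded away from zero uniformly in $m$, and for $B$ this must hold with high probability. This is exactly the content of Lemma \ref{lem:lower bound sigma min total}, whose proof rests on the convergence $\Gamma_m^\dagger M_m\to\Gamma^{-1}M$ from Lemma \ref{lem: Gamma inverse M to Gammam dagger Mm} together with a Weyl-type singular-value perturbation bound. Taking that lemma as given, the only genuinely delicate remaining point is the bookkeeping of the many lower bounds on $n$ in the statement, which are chosen precisely so that the leading rate $m^{\alpha_1+1}/n^{1/2-\gamma}$ dominates the secondary contribution and so that both singular-value lower bounds are in force simultaneously.
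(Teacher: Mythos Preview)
Your proposal is correct and follows essentially the same route as the paper: you identify $\mathbf{Loss}_1$ with $\lno P_{\Gamma_m^\dagger M_m}-P_{\wh\Gamma_m^\dagger\wh M_m^d}\rno$, apply the non-self-adjoint Sin Theta bound (Lemma~\ref{lemma, sin theta of nonadjoint operator}), control the numerator on the event $\ttE$ via Proposition~\ref{prop:bound of finite estimate} and Lemma~\ref{lem:Gammam dagger Mm uniformly bounded}, and handle the denominator through Lemma~\ref{lem:lower bound sigma min total}. The only cosmetic difference is that the paper linearizes the numerator by the explicit bound $\lno B-B'\rno\leqslant\lno\Gamma^{-1}M\rno$ on $\ttE$ (so that $\lno B-B'\rno^2\leqslant\lno\Gamma^{-1}M\rno\cdot\lno B-B'\rno$), whereas you argue that the quadratic term is of smaller order; both lead to the same conclusion.
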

\begin{proof}
By Lemma $\ref{lemma, way of estimate truncate central subspace}$, $\eqref{def: estimator central subspace}$ and Lemma $\ref{lemma, sin theta of nonadjoint operator}$, one has
\begin{align}
&\left\|P_{\mc S_{\Y|\vX}^{(m)}}-P_{\wh{\mc{S}}_{\Y|\vX}^{(m)}}\right\| =\left\|P_{\Gamma_m^{\dagger}M_m}-P_{\wh\Gamma_m^{\dagger}\wh M_m^d}\right\| \nonumber\\
&\qquad\leqslant\frac{\pi}{2}\frac{\lno\widehat\Gamma_m^\dagger \widehat M_m^d-\Gamma_m^\dagger M_m\rno ^2+\lno\widehat\Gamma_m^\dagger \widehat M_m^d-\Gamma_m^\dagger M_m\rno \lno\Gamma_m^\dagger M_m\rno }{\min\lb\sigma_{\min}^+\l\wh\Gamma_m^\dagger \wh M_m^d\r^2,\sigma_{\min}^+\l\Gamma_m^\dagger M_m\r^2\rb}\label{eq: PS minus P hat S norm}.
\end{align}
Because of $c_1(2\alpha_1+1)-1<0$ and $2(c_1(\alpha_1+1)+\gamma)-1<0$, it is easy to check that when
\[n\geqslant\max\lb\left[\tfrac{\|\Gamma^{-1}M\|  \wt C}{48}\l\tfrac{D_2}{D_0+1}\r^{\frac52}\right]^{\frac{2}{2(c_1(\alpha_1+1)+\gamma)-1}},\l \tfrac{\|\Gamma^{-1}M\| }{2D_3}\r^{\frac{2}{c_1(2\alpha_1+1)-1}}\rb,\]
both $\l\tfrac{D_0+1}{D_2}\r^{\frac52}\tfrac{24}{\wt C}n^{c_1(\alpha_1+1)+\gamma-\frac{1}{2}}$ and $D_3n^{\frac{c_1(2\alpha_1+1)-1}{2}}$ are less than or equal to $\frac{\|\Gamma^{-1}M\| }{2}$. Thus, on the event $\ttE$,
\begin{align}\label{eq: high prob upper bound is Gamma minus 1 M}
\lno\widehat\Gamma_m^\dagger \widehat M_m^d-\Gamma_m^\dagger M_m\rno \leqslant \lno\Gamma^{-1}M\rno .
\end{align}
By Lemma $\ref{lem:Gammam dagger Mm uniformly bounded}$, inserting \eqref{eq: high prob upper bound is Gamma minus 1 M} into \eqref{eq: PS minus P hat S norm} leads to
$$
\lno P_{\mc{S}_{\Y|\X}^{(m)}}-P_{ \widehat{\mc{S}}_{\Y|\X}^{(m)}}\rno
\leqslant \frac{\pi\lno\widehat\Gamma_m^\dagger \widehat M_m^d-\Gamma_m^\dagger M_m\rno \lno\Gamma^{-1}M\rno }{\min\lb\sigma_{\min}^+\l\wh\Gamma_m^\dagger \wh M_m^d\r^2,\sigma_{\min}^+\l\Gamma_m^\dagger M_m\r^2\rb},
$$
on the event $\ttE$.
Furthermore, when $n\geqslant \left[ \frac{D_3\wt C}{24}\l \frac{D_2}{D_0+1} \r^{\frac52} \right]^{\frac2{2\gamma+c_1}}$ and $n\geq n_2'$, one can get
$\l \tfrac{D_0+1}{D_2}\r^{\frac52}\tfrac{24m^{\alpha_1+1}}{\wt C n^{1/2-\gamma}}$ is greater than or equal to $D_3\tfrac{m^{(2\alpha_1+1)/2}}{n^{1/2}}$
and then on the event $\ttE$,
\begin{align*}
\lno P_{\mc{S}_{\Y|\X}^{(m)}}-P_{ \widehat{\mc{S}}_{\Y|\X}^{(m)}}\rno \leqslant \tfrac{96\pi\|\Gamma^{-1}M\| }{\sigma_d(\Gamma^{-1}M)^2}\l \tfrac{D_0+1}{D_2}\r^{\frac52}\tfrac{m^{\alpha_1+1}}{\wt C n^{1/2-\gamma}}.
\end{align*}
 by
Lemma $\ref{lem:lower bound sigma min total}$.
Then choosing $C_1=\tfrac{96\pi\|\Gamma^{-1}M\| }{\wt C\sigma_d(\Gamma^{-1}M)^2}\l \tfrac{D_0+1}{D_2}\r^{\frac52}$ can complete the proof.
\end{proof}

\paragraph{Proof of Lemma \ref{lem:Gammam dagger Mm uniformly bounded}}
\begin{proof}
First, it is easy to check that:
\begin{align}
\Gamma^\dag_m=\Pi_m\Gamma^{-1}\Pi_m=\Pi_m\Gamma^{-1}=\Gamma^{-1}\Pi_m=\sum\limits_{i=1}^m\lambda_i^{-1}\phi_i\otimes\phi_i.\label{eq: Gamma m dag def}
\end{align}
According to \eqref{eq: Gamma m dag def} and $M_m=\Pi_mM\Pi_m$, it is easy to check that $\Gamma_m^\dagger M_m=\Pi_m \Gamma^{- 1}M\Pi_m$. Then by the compatibility of operator norm, one can get
\begin{align*}
\lno\Gamma_m^\dagger M_m\rno =\lno\Pi_m \Gamma^{-1}M\Pi_m\rno \leqslant \lno\Pi_m\rno  \lno\Gamma^{-1}M\rno \lno\Pi_m\rno =\lno\Gamma^{-1}M\rno .
\end{align*}
Note that $\Gamma^{-1}M$ is bounded since $\Gamma^{-1}M$ is of finite rank by Corollary $\ref{corollary, MDDO and central subspace}$. Thus the proof is completed. 
\end{proof}

\paragraph{Proof of Lemma \ref{lem: Gamma inverse M to Gammam dagger Mm}}
\begin{proof}
It is easy to check that
$\Gamma_m^\dagger M_m=\Pi_m\Gamma^{-1}M\Pi_m$ and $\Gamma^{-1}M$ is of finite rank by Corollary $\ref{corollary, MDDO and central subspace}$.
Thus the proof is completed by Lemma $\ref{lem:PimTPimtoT}$.
\end{proof}
\paragraph{Proof of Lemma \ref{lem:lower bound sigma min total}}
\begin{proof}
We first prove \eqref{eq: lower bound of sigma min}.
By Corollary $\ref{corollary, MDDO and central subspace}$ and Lemma $\ref{lem:projection equality}$, one has $\rank(\Gamma^{- 1}M)=\rank\l\Gamma^{- 1}M(\Gamma^{- 1}M)^*\r=d$. Thus
\begin{align*}
\sigma_{\min}^+(\Gamma^{-1}M)^2=\lambda_{\min}^+\l\Gamma^{-1}M(\Gamma^{-1}M)^*\r=\lambda_d\l \Gamma^{-1}M(\Gamma^{-1}M)^*\r. 
\end{align*}
 It is easy to see $\rank(\Gamma_m^\dagger M_m)=\rank\l \Gamma_m^\dagger M_m(\Gamma_m^\dagger M_m)^*\r\leqslant d$ by $\Gamma_m^\dagger M_m=\Pi_m \Gamma^{-1} M \Pi_m$ and Lemma $\ref{lem:projection equality}$, thus one can assume that 
 \begin{align*}
\sigma_{\min}^+(\Gamma^\dagger_m M_m)^2=\lambda_{\min}^+\l\Gamma_m^\dagger M_m(\Gamma_m^\dagger M_m)^*\r=\lambda_j\l \Gamma_m^\dagger M_m(\Gamma_m^\dagger M_m)^*\r
\end{align*}
for some $j\leqslant d$.
By Corollary $\ref{coro:wely ineq operator}$, $\eqref{eq:sy ineq}$ and
Lemma $\ref{lem: Gamma inverse M to Gammam dagger Mm}$
, one has
\begin{align*}
&\left|\sigma_{\min}^+(\Gamma^\dagger_m M_m)^2\hspace{-0.5mm}-\hspace{-0.5mm}\sigma_j(\Gamma^{-1} M)^2\right|\hspace{-0.5mm}=\hspace{-0.5mm}\left|\lambda_{j}\hspace{-1mm}\l\Gamma^\dagger_m M_m(\Gamma^\dagger_m M_m)^{*}\hspace{-0.5mm}\r\hspace{-0.5mm}-\hspace{-0.5mm}\lambda_j\hspace{-1mm}\l \Gamma^{-1} M(\Gamma^{-1} M)^*\hspace{-0.5mm}\r\right|\\
&\qquad\leqslant
\|\Gamma^{-1} M(\Gamma^{-1} M)^*- \Gamma_m^\dagger M_m(\Gamma_m^\dagger M_m)^*\| \\
&\qquad\leqslant \|\Gamma^{-1} M- \Gamma_m^\dagger M_m\| ^2+
\|\Gamma^{-1} M- \Gamma_m^\dagger M_m\| \cdot\|\Gamma^{-1} M\| \xrightarrow{m\to\infty} 0. 
\end{align*}
Thus for 
$
n\geqslant m_T(\bigtriangleup)^{\frac1{c_1}}=m_T\l\max\lb\frac{\sigma_d(\Gamma^{-1} M)}{2},\frac{\sigma_d(\Gamma^{-1} M)^2}{4\|\Gamma^{-1}M\| }\rb\r^{\frac1{c_1}}, 
$
one has $\|\Gamma^{-1} M- \Gamma_m^\dagger M_m\| ^2$ and $\|\Gamma^{-1} M- \Gamma_m^\dagger M_m\| \cdot\|\Gamma^{-1} M\| $ are both less than or equal to $\frac{1}{4}\sigma_d(\Gamma^{-1} M)^2$. Hence one can get
$\left|\sigma_{\min}^+(\Gamma^\dagger_m M_m)^2-\sigma_j(\Gamma^{-1} M)^2\right|\leqslant\frac{1}{2}\sigma_d(\Gamma^{-1} M)^2$
and
\begin{equation}
\sigma_{\min}^+(\Gamma^\dagger M_m)^2\geqslant \sigma_j(\Gamma^{-1} M)^2-\frac{1}{2}\sigma_d(\Gamma^{-1} M)^2\geqslant\frac{1}{2}\sigma_d(\Gamma^{-1} M)^2
\end{equation}
for sufficiently large $n$. This completes the proof of \eqref{eq: lower bound of sigma min}.

Next we prove $\eqref{eq: lower bound of sigma min hat}$. Combining Proposition $\ref{prop:bound of finite estimate}$ with Lemma $\ref{lem: Gamma inverse M to Gammam dagger Mm}$ leads to that on the event $\ttE$, 
$$
\lno\wh \Gamma_m^\dag\wh M^d_m- \Gamma^{-1}M\rno \leqslant\ve+\l\tfrac{D_0+1}{D_2}\r^{\frac52}\tfrac{24}{\wt C}n^{c_1(\alpha_1+1)+\gamma-\frac{1}{2}}+D_3n^{\frac{c_1(2\alpha_1+1)-1}{2}}
$$
for  $n\geqslant \max\{n_1,m_T( \ve)^{1/c_1}\}$.
Assuming that $c_1(2\alpha_1+1)-1<0$ and $2(c_1(\alpha_1+1)+\gamma)-1<0$, it is easy to check that when $$n\geqslant\max\lb\left[\frac{\bigtriangleup \wt C}{96}\l\frac{D_2}{D_0+1}\r^{\frac52}\right]^{\frac{2}{2(c_1(\alpha_1+1)+\gamma)-1}},\l \frac{\bigtriangleup}{4D_3}\r^{\frac{2}{c_1(2\alpha_1+1)-1}}\rb$$, both $\l\tfrac{D_0+1}{D_2}\r^{\frac52}\tfrac{24}{\wt C}n^{c_1(\alpha_1+1)+\gamma-\frac{1}{2}}$ and $D_3n^{\frac{c_1(2\alpha_1+1)-1}{2}}$ are less than or equal to $\frac{\bigtriangleup}{4}$. Letting $\varepsilon=\frac12\bigtriangleup$, one can get on the event $\ttE$,
when
\begin{align*}
n&\geqslant n_2'=n_2'\hspace{-0.5mm}\l\hspace{-0.5mm}\sigma_d(\Gamma^{-1}M),\|\Gamma^{-1}M\| , \gamma,\sigma_0,\sigma_1,\bs K,m_M(1),c_1,m_T\l \tfrac{\bigtriangleup}{2}\r,\wt C,\alpha_1\hspace{-0.5mm}\r\\
&:=\max\bigg\{ n_1,m_T\l \tfrac{\bigtriangleup}{2}\r^{1/c_1}, \left[\tfrac{\bigtriangleup \wt C}{96}\l\tfrac{D_2}{D_0+1}\r^{\frac52}\right]^{\frac{2}{2(c_1(\alpha_1+1)+\gamma)-1}},\l \tfrac{\bigtriangleup}{4D_3}\r^{\frac{2}{c_1(2\alpha_1+1)-1}}\bigg\},
\end{align*}
one has $\lno\wh \Gamma_m^\dag\wh M^d_m- \Gamma^{-1}M\rno \leqslant\bigtriangleup$ and further
$\sigma_{\min}^+(\wh\Gamma^\dagger \wh M^d_m)^2\hspace{-1mm}\geqslant\hspace{-1mm} \tfrac{\sigma_d(\Gamma^{-1} M)^2}{2}$ by the same argument as the proof of \eqref{eq: lower bound of sigma min}.
 This completes the proof of \eqref{eq: lower bound of sigma min hat}.
Considering that $m_T(\bigtriangleup)\leqslant m_{T}\l\frac\bigtriangleup2\r$, one can also get $\eqref{eq: lower bound of sigma min}$ when $n\geqslant n_2'$. Thus the proof is completed.
\end{proof}
\subsection{Upper bound of error term (ii)}\label{ap, subs, truncation error}
\begin{proposition}\label{proposition, truncation error}
Under Assumption $\ref{assumption: rate-type condition}$, there exists a positive constant $C_2:=C_2\l d,\wt C,\lambda_d(\mc{B}),\alpha_2\r$ where $\mc{B}:=\sum\limits_{i=1}^d {\bs{\beta}}_i\otimes{\bs{\beta}}_i$ for ${\bs{\beta}}_i$ defined in \eqref{def: central subspace}, such that when $n\geqslant \l \frac{\lambda_d({\mc{B}})}{4d\wt C^2}\sqrt{\frac{2\alpha_2-1}{\zeta(2\alpha_2)}}\r^{\frac{2}{c_1(1-2\alpha_2)}}$, we have
\begin{equation}\label{equation, truncation error}
 \left\|P_{\mathcal S_{\Y|\boldsymbol{X}}}-P_{\mathcal S_{\Y|\boldsymbol{X}}^{(m)}}\right\| \leqslant C_2m^{-\frac{2\alpha_2-1}{2}},
\end{equation}
where $\zeta(\cdot)$ is Riemann $\zeta$ function.
\end{proposition}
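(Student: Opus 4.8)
The plan is to realize both the central subspace and its truncation as images of explicit self-adjoint, positive semi-definite, finite-rank operators, and then to invoke the self-adjoint Sin Theta theorem (Corollary~\ref{cor: sin theta self adjoint}) to convert the distance between the two projections into an operator-norm perturbation bound. Concretely, I would set $\mc B=\sum_{i=1}^d\bs\beta_i\otimes\bs\beta_i$ and note that, since $\Pi_m(x\otimes y)\Pi_m=(\Pi_m x)\otimes(\Pi_m y)$, its truncation is $\Pi_m\mc B\Pi_m=\sum_{i=1}^d\bs\beta_i^{(m)}\otimes\bs\beta_i^{(m)}=:\mc B^{(m)}$. Both operators are positive semi-definite and of finite rank, with $\mathrm{Im}(\mc B)=\mc S_{\Y|\bs X}$ and $\mathrm{Im}(\mc B^{(m)})=\mc S_{\Y|\bs X}^{(m)}$, the latter holding as soon as $\mc B^{(m)}$ still has rank $d$. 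Corollary~\ref{cor: sin theta self adjoint} then yields
\[
\lno P_{\mc S_{\Y|\bs X}}-P_{\mc S_{\Y|\bs X}^{(m)}}\rno=\lno P_{\mc B}-P_{\mc B^{(m)}}\rno\leqslant\frac\pi2\frac{\lno\mc B-\mc B^{(m)}\rno}{\min\lb\lambda_{\min}^+(\mc B),\lambda_{\min}^+(\mc B^{(m)})\rb},
\]
so the problem reduces to bounding the numerator from above and the denominator from below.

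For the numerator, I would write $\delta_i:=\bs\beta_i-\bs\beta_i^{(m)}=(I-\Pi_m)\bs\beta_i=\sum_{j>m}b_{ij}\phi_j$ and use the elementary telescoping identity $\bs\beta_i\otimes\bs\beta_i-\bs\beta_i^{(m)}\otimes\bs\beta_i^{(m)}=\bs\beta_i^{(m)}\otimes\delta_i+\delta_i\otimes\bs\beta_i^{(m)}+\delta_i\otimes\delta_i$, together with $\lno x\otimes y\rno=\lno x\rno\lno y\rno$ and $\lno\bs\beta_i^{(m)}\rno\leqslant\lno\bs\beta_i\rno$, to obtain $\lno\mc B-\mc B^{(m)}\rno\leqslant\sum_{i=1}^d(2\lno\bs\beta_i\rno+\lno\delta_i\rno)\lno\delta_i\rno$. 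Assumption~\ref{assumption: rate-type condition} controls both factors: $\lno\bs\beta_i\rno^2=\sum_j b_{ij}^2\leqslant(\wt C')^2\zeta(2\alpha_2)$, while the tail estimate $\sum_{j>m}j^{-2\alpha_2}\leqslant m^{-(2\alpha_2-1)}/(2\alpha_2-1)$ gives $\lno\delta_i\rno\leqslant\wt C'(2\alpha_2-1)^{-1/2}m^{-(2\alpha_2-1)/2}$. This already produces the advertised rate $\lno\mc B-\mc B^{(m)}\rno\lesssim m^{-(2\alpha_2-1)/2}$, with the constant depending only on $d$, the coefficient-decay constant $\wt C'$ of Assumption~\ref{assumption: rate-type condition} (written $\wt C$ in the statement), and $\alpha_2$.

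The \emph{main obstacle} is the lower bound on the denominator, i.e.\ ensuring truncation does not collapse the smallest positive eigenvalue. Here $\lambda_{\min}^+(\mc B)=\lambda_d(\mc B)$ is a fixed positive constant, and Weyl's inequality for compact self-adjoint operators (Corollary~\ref{coro:wely ineq operator}) gives $\abs{\lambda_d(\mc B^{(m)})-\lambda_d(\mc B)}\leqslant\lno\mc B-\mc B^{(m)}\rno$. The threshold on $n$ is exactly what is needed to force $\lno\mc B-\mc B^{(m)}\rno\leqslant\tfrac12\lambda_d(\mc B)$: solving $\sum_{i}2\lno\bs\beta_i\rno\lno\delta_i\rno\leqslant\tfrac12\lambda_d(\mc B)$ for $m=n^{c_1}$ yields precisely $n\geqslant\l\frac{\lambda_d(\mc B)}{4d\wt C'^2}\sqrt{\frac{2\alpha_2-1}{\zeta(2\alpha_2)}}\r^{\frac{2}{c_1(1-2\alpha_2)}}$. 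On this event two things happen at once: $\mathrm{rank}(\mc B^{(m)})=d$, which retroactively justifies $\mathrm{Im}(\mc B^{(m)})=\mc S_{\Y|\bs X}^{(m)}$ and closes the gap left in the first step, and $\lambda_{\min}^+(\mc B^{(m)})=\lambda_d(\mc B^{(m)})\geqslant\tfrac12\lambda_d(\mc B)$. Substituting the numerator estimate and this denominator bound into the Sin Theta inequality then gives $\lno P_{\mc S_{\Y|\bs X}}-P_{\mc S_{\Y|\bs X}^{(m)}}\rno\leqslant C_2\,m^{-(2\alpha_2-1)/2}$ with $C_2$ depending only on $d$, $\wt C'$, $\lambda_d(\mc B)$ and $\alpha_2$. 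I expect everything except this eigenvalue-nondegeneracy control to be routine tensor-norm bookkeeping; the only genuine care is verifying that the rank and smallest-positive-eigenvalue of $\mc B^{(m)}$ are preserved uniformly in the regime $m=n^{c_1}$.
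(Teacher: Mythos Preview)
Your proposal is correct and follows essentially the same route as the paper: represent both subspaces via $\mc B=\sum_i\bs\beta_i\otimes\bs\beta_i$ and $\mc B^{(m)}=\Pi_m\mc B\Pi_m$, apply the self-adjoint Sin~Theta bound (Corollary~\ref{cor: sin theta self adjoint}), control $\lno\mc B-\mc B^{(m)}\rno$ via the tail decay from Assumption~\ref{assumption: rate-type condition}, and use Weyl (Corollary~\ref{coro:wely ineq operator}) to keep $\lambda_{\min}^+(\mc B^{(m)})\geqslant\lambda_d(\mc B)/2$ above the stated threshold on $n$. The only cosmetic differences are that the paper bounds the numerator via the difference-of-squares identity $\langle\bs\beta_i,\bs\beta\rangle^2-\langle\bs\beta_i^{(m)},\bs\beta\rangle^2=\langle\bs\beta_i-\bs\beta_i^{(m)},\bs\beta\rangle\langle\bs\beta_i+\bs\beta_i^{(m)},\bs\beta\rangle$ rather than your operator telescoping (yielding $\sum_i\lno\delta_i\rno\lno\bs\beta_i+\bs\beta_i^{(m)}\rno\leqslant\sum_i 2\lno\bs\beta_i\rno\lno\delta_i\rno$, slightly tighter than your $(2\lno\bs\beta_i\rno+\lno\delta_i\rno)\lno\delta_i\rno$ but identical to leading order), and that your caveat about $\mathrm{Im}(\mc B^{(m)})=\mc S_{\Y|\bs X}^{(m)}$ requiring full rank is unnecessary: since $\mc B^{(m)}=VV^*$ with $V=[\bs\beta_1^{(m)},\dots,\bs\beta_d^{(m)}]$, one always has $\mathrm{Im}(\mc B^{(m)})=\mathrm{Im}(V)=\mathrm{span}\{\bs\beta_i^{(m)}\}$ regardless of rank.
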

\begin{proof}
Let ${\mc{B}^{(m)}}:=\sum\limits_{i=1}^d {\bs{\beta}}_i^{(m)}\otimes{\bs{\beta}}_i^{(m)}$ for ${\bs{\beta}}_i^{(m)}$ defined in \eqref{def: truncated central subspace}.
Combing with Equation $\eqref{def: central subspace}$, it is easy to check that $\left\|P_{\mathcal S_{\Y|\boldsymbol{X}}}-P_{\mathcal S_{\Y|\boldsymbol{X}}^{(m)}}\right\| =\|P_{\mc{B}}-P_{\mc{B}^{(m)}}\| $. By Corollary $\ref{cor: sin theta self adjoint}$, we have
\begin{align}\label{eq:sin theta for B Bm}
\|P_{\mc{B}}-P_{\mc{B}^{(m)}}\| \leqslant \frac{\pi}{2}\frac{\|{\mc{B}}-{\mc{B}^{(m)}}\| }{\min\{\lambda_{\min}^+({\mc{B}}),\lambda_{\min}^+({\mc{B}^{(m)}})\}}.
\end{align}

Note that ${\mc{B}}-{\mc{B}^{(m)}}$ is self-adjoint, then
\begin{align*}
&\lno{\mc{B}}-{\mc{B}^{(m)}}\rno =\sup_{{\bs{\beta}}\in\mathbb{S}_{ \mathcal H}}|\langle ({\mc{B}}-{\mc{B}^{(m)}})({\bs{\beta}}),{\bs{\beta}}\rangle|=\sup_{{\bs{\beta}}\in\mathbb{S}_{\mathcal H}}|\langle {\mc{B}}{\bs{\beta}},{\bs{\beta}}\rangle-\langle {\mc{B}^{(m)}}{\bs{\beta}},{\bs{\beta}}\rangle|\\
&~~=\sup_{{\bs{\beta}}\in\mathbb{S}_{\mathcal H}}\hspace{-0.9mm}\left|\sum_{i=1}^d\hspace{-0.9mm}\left[\langle{\bs{\beta}}_i,{\bs{\beta}}\rangle^2-\langle{\bs{\beta}}_i^{(m)},{\bs{\beta}}\rangle^2\right]\right|=\sup_{{\bs{\beta}}\in\mathbb{S}_{\mathcal H}}\hspace{-0.9mm}\left| \sum_{i=1}^d\langle{\bs{\beta}}_i-{\bs{\beta}}_i^{(m)},{\bs{\beta}}\rangle\langle{\bs{\beta}}_i+{\bs{\beta}}_i^{(m)},{\bs{\beta}}\rangle\right|\\
&~~\leqslant\sup_{{\bs{\beta}}\in\mathbb{S}_{\mathcal H}}\sum_{i=1}^d\left| \langle{\bs{\beta}}_i-{\bs{\beta}}_i^{(m)},{\bs{\beta}}\rangle\langle{\bs{\beta}}_i+{\bs{\beta}}_i^{(m)},{\bs{\beta}}\rangle\right|
\leqslant\sum_{i=1}^d\left\|{\bs{\beta}}_i-{\bs{\beta}}_i^{(m)}\right\|\left\|{\bs{\beta}}_i+{\bs{\beta}}_i^{(m)}\right\|,
\end{align*}
where the first inequality comes from the triangle inequality, and the 
second inequality comes from the Cauchy-Schwarz inequality and $\|{\bs{\beta}}\|=1$. 
 Then one has ${\bs{\beta}}_i=\sum\limits_{j=1}^\infty b_{ij}\phi_j$ and 
\[{\bs{\beta}}^{(m)}_i=\Pi_m{\bs{\beta}}_i=\sum_{j'=1}^m\phi_{j'}\otimes\phi_{j'}\sum_{j=1}^\infty b_{ij}\phi_j=\sum_{j'=1}^m\sum_{j=1}^\infty\langle\phi_{j'},\phi_j\rangle b_{ij}\phi_{j'}=\sum_{j=1}^mb_{ij}\phi_j.\]
According to Assumption $\ref{assumption: rate-type condition}$, one can get
\begin{align*}
\left\|{\bs{\beta}}_i-{\bs{\beta}}_i^{(m)}\right\|&=\left\|\sum_{j=m+1}^\infty b_{ij}\phi_j\right\|=\sqrt{\sum_{j=m+1}^\infty b_{ij}^2}\leqslant \wt C\sqrt{\sum_{j=m+1}^\infty j^{-2\alpha_2}};\\
\left\|{\bs{\beta}}_i+{\bs{\beta}}_i^{(m)}\right\|&\leqslant\|{\bs{\beta}}_i\|+\lno{\bs{\beta}}_i^{(m)}\rno\leqslant2\|{\bs{\beta}}_i\|=2\sqrt{\sum_{j=1}^\infty b_{ij}^2}\leqslant 2\wt C\sqrt{\sum_{j=1}^\infty j^{- 2\alpha_2}}.
\end{align*}
Because $\alpha_2>1/2$, one has
\[\sum\limits_{j=m+1}^\infty \frac{1}{j^{2\alpha_2}}\leqslant \frac{1}{2\alpha_2-1}\frac{1}{m^{2\alpha_2-1}};\qquad \sum_{j=1}^\infty \frac 1{j^{2\alpha_2}}=\zeta(2\alpha_2)\text{ is convergent},\]
where $\zeta(\cdot)$ is Riemann $\zeta$ function. Thus, one can get
\begin{equation}\label{eq: upper bound of operator norm of A minus B}
\lno{\mc{B}}-{\mc{B}^{(m)}}\rno \leqslant 2d\wt C^2\sqrt{\frac{\zeta(2\alpha_2)}{2\alpha_2-1}}m^{-\frac{2\alpha_2-1}{2}}.
\end{equation}

Furthermore, 
{since $\mr{rank}(\mc{B})=d$, one can get that $\lambda_{\min}^+(\mc{B})=\lambda_{d}(\mc{B})$. It is easy to see $\rank(\mc{B}^{(m)})\leqslant d$ by $\mc{B}^{(m)}=\Pi_m \mc{B} \Pi_m$, thus one can assume that $\lambda_{\min}^+(\mc{B}^{(m)})=\lambda_j( \mc{B}^{(m)})$ for some $j\leqslant d$.
By Corollary $\ref{coro:wely ineq operator}$
and \eqref{eq: upper bound of operator norm of A minus B}, one has:
$$
|\lambda_j( \mc{B}^{(m)})-\lambda_j\l \mc{B}\r|\leqslant\lno \mc{B}-\mc{B}^{(m)}\rno \leqslant 2d\wt C^2\sqrt{\frac{\zeta(2\alpha_2)}{2\alpha_2-1}}m^{-\frac{2\alpha_2-1}{2}}.
$$
Thus for sufficiently large {$n\geqslant \l \frac{\lambda_d({\mc{B}})}{4d\wt C^2}\sqrt{\frac{2\alpha_2-1}{\zeta(2\alpha_2)}} \r^{\frac{2}{c_1(1-2\alpha_2)}}$}, one has
\begin{align}
&\lambda_j\l \mc{B}^{(m)}\r\geqslant \lambda_j\l \mc{B}\r-\frac{\lambda_d\l \mc{B}\r}{2}
\geqslant\frac{\lambda_d\l \mc{B}\r}{2}\nonumber\\
&\qquad\Longrightarrow \min\{\lambda_{\min}^+({\mc{B}}),\lambda_{\min}^+({\mc{B}^{(m)}})\}\geqslant \frac{\lambda_d({\mc{B}})}{2}. \label{eq:lower bound lambda min plus B Bm}
\end{align}}
Inserting \eqref{eq: upper bound of operator norm of A minus B} and \eqref{eq:lower bound lambda min plus B Bm} into \eqref{eq:sin theta for B Bm} leads to
\begin{align*}
\left\|P_{\mathcal S_{\Y|\boldsymbol{X}}}-P_{\mathcal S_{\Y|\boldsymbol{X}}^{(m)}}\right\| \leqslant \frac{2\pi d\wt C^2}{\lambda_{d}(\mc{B})}\sqrt{\frac{\zeta(2\alpha_2)}{2\alpha_2-1}}m^{-\frac{2\alpha_2-1}{2}}.
\end{align*}
Then choosing $C_2:=\frac{2\pi d\wt C^2}{\lambda_d({\mc{B}})}\sqrt{\frac{\zeta(2\alpha_2)}{2\alpha_2-1}}$ can complete the proof.
\end{proof}

\subsection{Proof of Theorem \ref{theorem, total convergence rate}}
\begin{proof}
Note that
\begin{equation}
\begin{aligned}
\left\|P_{\mc{S}_{\Y|\X}}-P_{\widehat{\mc{S}}_{\Y|\X}^{(m)}}\right\| 
&\leqslant \left\|P_{\mc{S}_{\Y|\X}}-P_{\mc{S}_{\Y|\X}^{(m)}}\right\| +\left\|P_{\mc{S}_{\Y|\X}^{(m)}}-P_{ \widehat{\mc{S}}_{\Y|\X}^{(m)}}\right\| .\\
\end{aligned}
\end{equation}
Next we select $m$ to be $n^{\frac{1-2\gamma}{2\alpha_1+2\alpha_2+1}}$, i.e.,  $c_1:=\frac{1-2\gamma}{2\alpha_1+2\alpha_2+1}$. And it is easy to check that $c_1$ satisfies $2c_1(\alpha_1+1)+2\gamma-1=-\frac{(1-2\gamma)(2\alpha_2-1)}{2\alpha_1+2\alpha_2+1}<0$ and $c_1(2\alpha_1+1)-1=-\frac{2[\gamma(2\alpha_1+1)+\alpha_2]}{2\alpha_1+2\alpha_2+1}<0$.
Then combining Proposition $\ref{proposition, estimation error}$ with Proposition $\ref{proposition, truncation error}$ leads to
\begin{align*}
\P\left[\left\|P_{\mc S_{\Y|\X}}-P_{\widehat{\mc{S}}_{\Y|\X}^{(m)}}\right\| \leqslant\hspace{-0.5mm} (C_1+C_2)n^{-\frac{(2\alpha_2-1)(1-2\gamma)}{2(2\alpha_1+2\alpha_2+1)}}\right]\hspace{-1mm}\geqslant\hspace{-1mm} 1-2\exp\hspace{-0.5mm}\l\hspace{-1mm}- C'n^{\frac{1-2\gamma}{2\alpha_1+2\alpha_2+1}}\r&\\
-\exp\left[\ln\l D_1n^{\frac{2\alpha_1+2\alpha_2+3-4\gamma}{2\alpha_1+2\alpha_2+1}} \r-(D_0+1)n^{\frac{2\gamma}{5}}\right]&
\end{align*}
when $n\geqslant n_3'$, where
\begin{align*}
n_3'=\max\Bigg\{n_1,n_2',\left[\tfrac{\|\Gamma^{-1}M\|  \wt C}{48}\l\tfrac{D_2}{D_0+1}\r^{\frac52}\right]^{\frac{2}{2(c_1(\alpha_1+1)+\gamma)-1}}\hspace{-0.9mm},\l \tfrac{\|\Gamma^{-1}M\| }{2D_3}\r^{\frac{2}{c_1(2\alpha_1+1)-1}}\hspace{-0.9mm},\\
\l\tfrac{D_0+1}{D_2}\r^{\frac{5}{1-2\gamma}},\left[ \tfrac{D_3\wt C}{24}\l \tfrac{D_2}{D_0+1} \r^{\frac52} \right]^{\frac2{2\gamma+c_1}},\l\tfrac{\lambda_d(\mc{B})}{4d\wt C^2}\sqrt{\tfrac{{2\alpha_2-1}}{\zeta(2\alpha_2)}} \r^{\frac{2}{c_1(1-2\alpha_2)}}\Bigg\}
\end{align*}

It is easy to check that as long as $\frac{2\gamma}{5}<\frac{1-2\gamma}{2\alpha_1+2\alpha_2+1}\Longrightarrow\gamma<\frac{5}{4(\alpha_1+\alpha_2+3)}$, 
there exists a constant $n_3''=n_3''\l \gamma,\alpha_1,\alpha_2,D_0,D_1,C'\r$ such that when $n\geqslant n_3'$ further, we have 
\begin{align*}
\P\l\left\|P_{\mc S_{\Y|\X}}-P_{\widehat{\mc{S}}_{\Y|\X}^{(m)}}\right\| \leqslant (C_1+C_2)n^{-\frac{(2\alpha_2-1)(1-2\gamma)}{2(2\alpha_1+2\alpha_2+1)}} \r
\geqslant1-2\exp\l-\tfrac{D_0+1}{2}n^{\frac{2\gamma}{5}} \r.
\end{align*}
Thus one can choose $n_3=\max\{n_3',n_3''\}$ to get the following conclusion.
\begin{proposition}
Under Assumptions $\ref{as:joint distribution assumption}$ to $\ref{assumption: rate-type condition}$, for any $\gamma\in\l0,\tfrac{5}{4(\alpha_1+\alpha_2+3)}\r$, choosing 
$m=n^{\frac{1-2\gamma}{2\alpha_1+2\alpha_2+1}}$ (i.e.,  $c_1=\frac{1-2\gamma}{2\alpha_1+2\alpha_2+1}$) yields a positive constant
\begin{align*}
D_4:=D_4\l \|\Gamma^{-1}M\| ,\sigma_d(\Gamma^{-1}M) ,\gamma,\sigma_0,\sigma_1,d,\wt C,\lambda_d\l\sum\limits_{i=1}^d {\bs{\beta}}_i\otimes{\bs{\beta}}_i\r,\alpha_2\r 
\end{align*}
such that when $n$ is sufficiently large, we have:
\begin{align*}
\P\l\left\|P_{\mc{S}_{\Y|\X}}-P_{\widehat{\mc{S}}_{\Y|\X}^{(m)}}\right\| \leqslant D_4n^{-\frac{(2\alpha_2-1)(1-2\gamma)}{2(2\alpha_1+2\alpha_2+1)}} \r
\geqslant1-2\exp\l -\tfrac{D_0+1}{2}n^{\frac{2\gamma}{5}} \r,
\end{align*}
where $D_0$ and $D_1$ are defined in Proposition $\ref{prop:bound hatMmd Mm}$.
\end{proposition}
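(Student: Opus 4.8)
The plan is to establish the stated high-probability bound by splitting the subspace error through the triangle inequality, exactly as in the opening line of the proof, into the finite-sample term $\mathbf{Loss}_1=\lno P_{\mc S_{\Y|\X}^{(m)}}-P_{\wh{\mc S}_{\Y|\X}^{(m)}}\rno$ and the truncation term $\mathbf{Loss}_2=\lno P_{\mc S_{\Y|\X}}-P_{\mc S_{\Y|\X}^{(m)}}\rno$, and then controlling each by a power of $n$ with the same exponent. I would invoke Proposition \ref{proposition, estimation error} for $\mathbf{Loss}_1$ and Proposition \ref{proposition, truncation error} for $\mathbf{Loss}_2$; both propositions apply once the choice $m=n^{c_1}$ with $c_1=\frac{1-2\gamma}{2\alpha_1+2\alpha_2+1}$ is checked to satisfy the two admissibility constraints $2c_1(\alpha_1+1)+2\gamma-1<0$ and $c_1(2\alpha_1+1)-1<0$, which is a direct computation.

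First I would substitute $m=n^{c_1}$ into the bound of Proposition \ref{proposition, estimation error}. On the relevant event $\mathbf{Loss}_1\lesssim m^{\alpha_1+1}/n^{1/2-\gamma}=n^{c_1(\alpha_1+1)+\gamma-1/2}$, and a short algebraic simplification gives $c_1(\alpha_1+1)+\gamma-\tfrac12=-\frac{(2\alpha_2-1)(1-2\gamma)}{2(2\alpha_1+2\alpha_2+1)}$, so the finite-sample term is $\lesssim n^{-\frac{(2\alpha_2-1)(1-2\gamma)}{2(2\alpha_1+2\alpha_2+1)}}$. Substituting the same $m$ into Proposition \ref{proposition, truncation error} gives $\mathbf{Loss}_2\leqslant C_2 m^{-(2\alpha_2-1)/2}=C_2 n^{-c_1(2\alpha_2-1)/2}$, whose exponent is identical. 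Adding the two bounds (the second being deterministic for large $n$, the first holding on the good event) yields the target rate with constant $C_1+C_2=:D_4$.

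The main bookkeeping obstacle is assembling the failure probability. Proposition \ref{proposition, estimation error} fails with probability at most $2\exp(-C'm)+D_1m^2n\exp(-(D_0+1)n^{2\gamma/5})$. With $m=n^{c_1}$ the polynomial prefactor is $D_1m^2n=D_1n^{(2\alpha_1+2\alpha_2+3-4\gamma)/(2\alpha_1+2\alpha_2+1)}$, and I must show it is swallowed by the stretched exponential $\exp(-(D_0+1)n^{2\gamma/5})$. This is precisely where the hypothesis $\gamma<\tfrac{5}{4(\alpha_1+\alpha_2+3)}$ enters: it is equivalent to $\tfrac{2\gamma}{5}<c_1$, hence $\ln(D_1m^2n)$ is of strictly smaller order than $n^{2\gamma/5}$, so for all large $n$ one has $D_1m^2n\exp(-(D_0+1)n^{2\gamma/5})\leqslant\exp(-\tfrac{D_0+1}{2}n^{2\gamma/5})$; the term $2\exp(-C'n^{c_1})$ decays even faster and is likewise absorbed. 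This yields the single clean tail $1-2\exp(-\tfrac{D_0+1}{2}n^{2\gamma/5})$ once $n$ exceeds the explicit threshold $n_3=\max\{n_3',n_3''\}$.

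Finally, I would point out that this high-probability statement is exactly what is needed to close Theorem \ref{theorem, total convergence rate}: since the difference of two orthogonal projections satisfies $\lno P_A-P_B\rno\leqslant1$ surely, one can integrate the tail by writing $\E[\lno\cdots\rno^2]=\E[\lno\cdots\rno^2\mathbf 1_{\ttE}]+\E[\lno\cdots\rno^2\mathbf 1_{\ttE^c}]$, bounding the first term by $D_4^2\, n^{-\frac{(2\alpha_2-1)(1-2\gamma)}{2\alpha_1+2\alpha_2+1}}$ (squaring doubles the exponent and cancels the factor $2$ in its denominator, reproducing the rate claimed in the theorem) and the second by $\P(\ttE^c)\leqslant2\exp(-\tfrac{D_0+1}{2}n^{2\gamma/5})$, which is asymptotically negligible against any negative power of $n$. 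The finitely many small $n$ below the threshold are absorbed into the implicit constant of $\lesssim$ via the universal bound $\lno\cdots\rno^2\leqslant1$, giving the claimed $L^2$ rate.
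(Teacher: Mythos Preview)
Your proposal is correct and follows essentially the same route as the paper: split via the triangle inequality, invoke Propositions \ref{proposition, estimation error} and \ref{proposition, truncation error} after verifying the two constraints on $c_1$, match the two exponents at $-\tfrac{(2\alpha_2-1)(1-2\gamma)}{2(2\alpha_1+2\alpha_2+1)}$, and absorb the failure terms using $\tfrac{2\gamma}{5}<c_1$ to reach the single tail $1-2\exp(-\tfrac{D_0+1}{2}n^{2\gamma/5})$. Your closing paragraph on deducing Theorem \ref{theorem, total convergence rate} by integrating against the tail is also exactly what the paper does next (the paper uses the cruder bound $\lno P_A-P_B\rno^2\leqslant4$ rather than your sharper $\leqslant1$, but this is immaterial).
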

\noindent
Define 
$$\mathtt F:=\left\{\left\|P_{\mc{S}_{\Y|\X}}-P_{\widehat{\mc{S}}_{\Y|\X}^{(m)}}\right\| \leqslant D_4n^{-\frac{(2\alpha_2-1)(1-2\gamma)}{2(2\alpha_1+2\alpha_2+1)}}\right\}.$$
Then 
\begin{align*}
 \mb E\left[\left\|P_{\mc{S}_{\Y|\X}}-P_{\widehat{ \mc{S}}_{\Y|\X}^{(m)}}\right\|^2\right] =&
  \mb E\left[\left\|P_{\mc{S}_{\Y|\X}}-P_{\widehat{ \mc{S}}_{\Y|\X}^{(m)}}\right\|^21_{\mathtt{F}}\right] +
   \mb E\left[\left\|P_{\mc{S}_{\Y|\X}}-P_{\widehat{ \mc{S}}_{\Y|\X}^{(m)}}\right\|^21_{\mathtt{F}^c}\right]\\ 
 \leqslant &
 D_4^2n^{-\frac{(2\alpha_2-1)(1-2\gamma)}{2\alpha_1+2\alpha_2+1}}+4\mb P\left( \mathtt F^c\right)\\
 \lesssim&n^{-\frac{(2\alpha_2-1)(1-2\gamma)}{2\alpha_1+2\alpha_2+1}}+\exp\l -\tfrac{D_0+1}{2}n^{\frac{2\gamma}{5}} \r\\
\lesssim&n^{-\frac{(2\alpha_2-1)(1-2\gamma)}{2\alpha_1+2\alpha_2+1}}.
\end{align*}
This completes the proof of  Theorem \ref{theorem, total convergence rate}.
\end{proof}

\section{Additional Simulation Results of Section \ref{sec:Synthetic}}
This section contains the additional  simulation results  of Sections \ref{sec:Synthetic}  when $\varepsilon\sim N(0,1)$.

We show the average $\mc D(\bs B;\bs{\wh B})$ with different $m$ or $\rho$ for three methods under $\mc M_1$ to $\mc M_3$ in Figure \ref{fig:error 3models,noise1},
where we mark minimal error in each model with red `$\times$'. The shaded areas represent the standard error associated with these estimates and all of them are less than  $0.01$. For FSFIR, the  minimal errors for $\mc M_1-\mc M_3$ are  $0.08,0.02,0.01$ respectively.
For TFSIR, the  minimal errors are  $0.08,0.02,0.01$ and for regularized FSIR,  the  minimal errors are $0.13,0.06,0.01$.  

\begin{figure}[H]
	\centering
	\begin{minipage}{0.33\textwidth}
		\includegraphics[width=\textwidth]{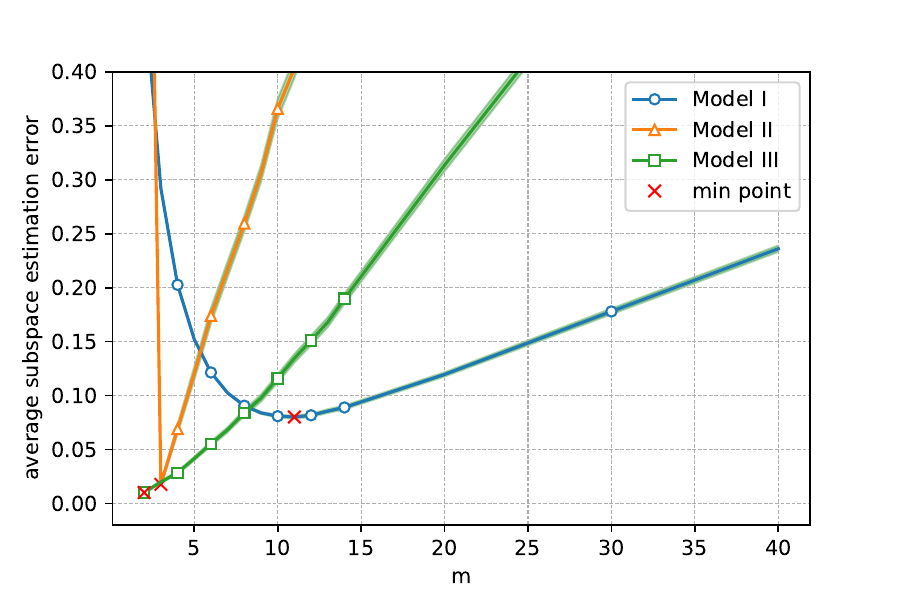}
	\end{minipage}\hfill
	\begin{minipage}{0.33\textwidth}
		\includegraphics[width=\textwidth]{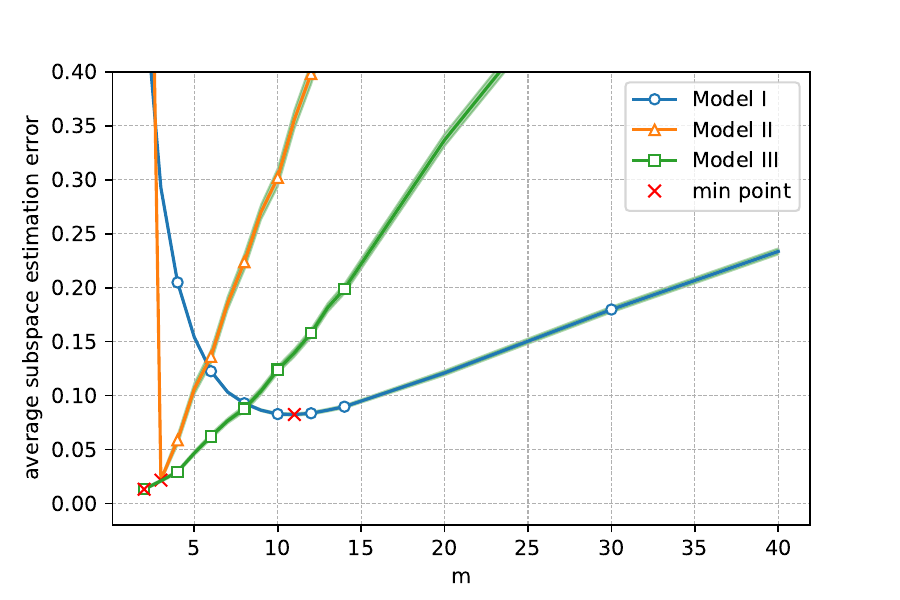}
	\end{minipage}
	\begin{minipage}{0.33\textwidth}
		\includegraphics[width=\textwidth]{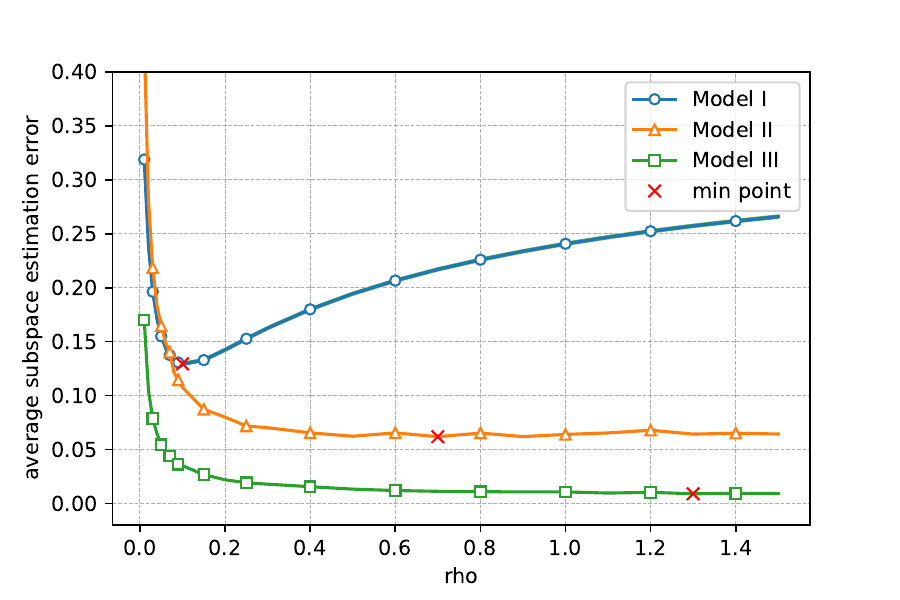}
	\end{minipage}
\caption{Average subspace estimation error of FSFIR (left), TFSIR (middle) and RFSIR (right) for various models. The standard errors are all below $0.01$. }
\label{fig:error 3models,noise1}
\end{figure}

Figure \ref{fig:error 3models,noise1} shows that FSFIR attains the best performance among  all models. 
Moreover, FSFIR is easier to practice as it does not need a slice number $H$ in advance.

\end{document}